\numberwithin{figure}{section}
\numberwithin{equation}{section}
\newtheorem{theorem}{Theorem}[section]
\newtheorem{proposition}[theorem]{Proposition}
\theoremstyle{definition}
\newcommand{\Rm}[1]{
  \textup{\uppercase\expandafter{\romannumeral#1}}
}
\newcommand{\diff}{\,\mathrm{d}}
\newcommand{\Dissip}{\mathcal{D}}
\newcommand{\N}{\mathbb{N}}
\newcommand{\nk}{{\nu, \kappa}}
\renewcommand{\P}{\mathbb{P}}
\newcommand{\pt}{\partial_t}
\newcommand{\px}{\partial_x}
\newcommand{\R}{\mathbb{R}}
\newcommand{\Rc}{\mathcal{R}}
\newcommand{\T}{\mathbb{T}}
\newcommand{\ve}{\varepsilon}
\newcommand{\vr}{\varrho}
\newcommand{\vs}{\varsigma}
\newcommand{\Z}{\mathbb{Z}}
\newcommand{\be}{\begin{equation}}
\newcommand{\ee}{\end{equation}}
\newcommand{\pa}{\partial}
\newcommand{\la}{\label}
\newcommand{\fr}{\frac}
\newcommand{\beg}{\begin}
\newcommand{\na}{\nabla}
\let\div\relax
\DeclareMathOperator{\div}{\mathrm{div}}
\title[The Nernst-Planck-Euler System]{Global Solutions of the Nernst-Planck-Euler Equations}
\date{\today}
\author{Mihaela Ignatova}
\address{Department of Mathematics, Temple University, Philadelphia, PA 19122}
\email{ignatova@temple.edu}
\author{Jingyang Shu}
\address{Department of Mathematics, Temple University, Philadelphia, PA 19122}
\email{ jyshu@temple.edu}
\begin{document}

\begin{abstract}
We consider the initial value problem for the Nernst-Planck equations coupled to the incompressible Euler equations in $\mathbb T^2$. We prove global existence of weak solutions for vorticity in $L^p$. We also obtain global existence and uniqueness of smooth solutions. We show that smooth solutions of the Nernst-Planck-Navier-Stokes equations converge to solutions of the Nernst-Planck-Euler equations as viscosity tends to zero. All the results hold for large data. 

\end{abstract}

\maketitle

\section{Introduction}

In this paper, we consider the Nernst-Planck (NP) system
\begin{align}
& \pt c_i + \nabla \cdot \left(u c_i - D_i \nabla c_i - z_i D_i c_i \nabla \Phi\right) = 0, \qquad i = 1, \dotsc, N, \label{NP}\\
& - \ve \Delta \Phi = \rho, \label{poisson0}\\
& \rho = \sum_{i = 1}^N z_i c_i, \label{charge}
\end{align}
coupled with the incompressible Euler equations
\begin{align}
& \pt u + u \cdot \nabla u + \nabla p = - (k_B T_K) \rho \nabla \Phi, \label{euler0}\\
& \nabla \cdot u = 0, \label{incomp0}
\end{align}
where $c_i \colon \T^d \times [0, T] \to \R^+$ are the $i$-th ionic species concentrations, $z_i \in \Z$ are corresponding valences, $D_i > 0$ are constant diffusivities, $\Phi \colon \T^d \times [0, T] \to \R$ is the nondimensional electrical potential, $\rho \colon \T^d \times [0, T] \to \R$ is the nondimensional charge density, $u \colon \T^d \times [0, T] \to \T^d$ is the fluid velocity field, $p \colon \T^d \times [0, T] \to \R$ is the fluid pressure, $k_B > 0$ is the Boltzmann's constant, $T_K > 0$ is the (absolute) temperature, and $\ve > 0$ is a constant proportional to the square of the Debye length \cite{CI19,Rub90}. Here, $\T^d$ is a $d$-dimensional torus. 

We refer to the system \eqref{NP}--\eqref{incomp0} as the Nernst-Planck-Euler (NPE) equations. This system is obtained from the Nernst-Planck-Navier-Stokes (NPNS) equations by neglecting the kinematic viscosity term $\nu \Delta u$ in the momentum equations \eqref{euler0}. The NPNS equations are a classical model of the transport and the electrodiffusion of ions in homogeneous Newtonian fluids, with wide applications in physical chemistry, biology and engineering \cite{Rub90,Sch09}.

The initial value problem for the NPNS equations in the whole space $\R^d$ ($d = 2, 3$) is locally well posed \cite{Jer02}, and weak solutions are global \cite{LW20}. The equations have been studied in bounded domains. There are various boundary conditions for the ionic concentrations and for the electrical potential which influence the dynamics. Most known results concern vanishing flux (or blocking) boundary conditions for the ionic species.  

In two and three dimensions, with blocking boundary conditions and when the electric potential satisfies homogeneous Neumann boundary conditions, the NPNS equations have global weak solutions \cite{Sch09}. With the same boundary conditions for the ionic concentrations and homogeneous Dirichlet boundary conditions on the potential, in two dimensions, weak solutions exist globally for large data, and in three dimensions, global weak solutions are obtained for small data \cite{Ryh09p}. Global existence of weak solutions in three dimensional bounded domains with blocking boundary conditions is shown in \cite{FS17, JS09}. 

Global strong solutions are considered in two dimensions, with blocking boundary conditions for the concentrations and with Robin boundary conditions for the electric potential \cite{BFS14}. Also in two space dimensions, with blocking boundary conditions or uniform selective (special Dirichlet) boundary conditions for the concentrations and arbitrary inhomogeneous Dirichlet boundary conditions for the electrical potential, smooth solutions are proved to exist globally \cite{CI19} for arbitrary large data, and to converge to unique Boltzmann states. In three dimensions, under the same boundary conditions, nonlinear stability of Boltzmann states is obtained in \cite{CIL20pa}. In these two dimensional and three dimensional stable cases, interior electroneutrality ($\rho\to 0$) is established in the limit of vanishing Debye length in \cite{CIL20pc}.

The global existence of smooth solutions of three dimensional Nernst-Planck equations is not known in general, even if the equations are not coupled to fluids, or if they are coupled to the Stokes equation. Recent work \cite{CIL20pb} shows global regularity of solutions of the Nernst-Planck equations coupled to Stokes equations (or with Navier-Stokes equations, assuming the latter do not blow up) in three dimensions with general selective boundary conditions for large data.  

There are very few results on the NPE system \eqref{NP}--\eqref{incomp0}. A recent work \cite{QZ20p}  asserts that H\"{o}lder solutions may lose their regularity in finite time for equations which are purely inviscid (no ionic diffusivities). This work is based on the method of \cite{tarek} which is applicable to $C^{1,\alpha}$ solutions of the Euler equations that are not Lipschitz.

The Euler equations in two dimensions have well known global weak solutions \cite{Yudovich} if the vorticity is in $L^p$, $1<p<\infty$. If $p=\infty$ the solutions are unique. These results rely heavily on the 2D transport structure of the vorticity equation. If a force is driving the Euler equations, then the global existence of solutions depends on the nature of the forcing. The NPE system provides a natural example of physical forcing of the Euler equations, with the force being the Lorentz force, in the absence of a magnetic field.

In this paper we prove global existence for the NPE equations in $\T^2$ and show that they are the zero viscosity limit of NPNS equations. We focus on the initial value problem for the NPE equations \eqref{NP}--\eqref{incomp0} in two space dimensions with two ionic species ($N = 2$) with opposite valences ($z_1 = - z_2 = 1$) and with equal diffusivities ($D_1 = D_2 = D$). The initial data of the system is
\begin{align}
& c_i(\cdot , 0) = c_i(0), \quad i = 1, 2,\label{initc}\\
& u(\cdot ,0) = u_0, \label{initu}
\end{align}
where the ionic concentrations are nonnegative, $c_i(0) \geq 0$, and the electric charge obeys
\begin{align}
\label{neut}
\int_{\T^2} \rho(x, 0) \diff{x} = \sum_{i = 1}^2 \int_{\T^2} z_i c_i(x, 0) \diff{x} = \int_{\T^2} c_1(x, 0) - c_2(x, 0) \diff{x} = 0.
\end{align}
It follows from \eqref{NP} that the property \eqref{neut} is preserved in time.
For regular solutions of NPNS it is shown in \cite{CI19, CIL20pb} that if $c_i(0) \geq 0$, then $c_i(x, t)$ remains nonnegative for $t > 0$. This property follows from \eqref{NP} if $c_i$ are known to be sufficiently regular, and the same proof and result holds for the NPE equations.

In Section~\ref{sec:weak}, we address weak solutions of the NPE system with $\omega\in L^r$, $r\ge 2$. The NP equations are rewritten in terms of the charge density $\rho = c_1-c_2$ and the total concentration $\sigma = c_1 + c_2$. The parabolic nature of the NP equations and the transport structure of the Euler equations are crucially used, but they are not the sole reason for the global existence. In fact, it is doubtful that these would suffice in general. The additional important element of the proof is a coercivity in $L^2$ which is due to the positivity of the concentrations. This coercivity \eqref{L2estin} is the starting point for $L^p$ a~priori estimates for the charge density and total concentrations which do not involve the velocity in a quantitative fashion. Estimates in $W^{1,r}$ with $r\ge 2$ are then obtained for the ionic concentrations and the velocity. The a~priori estimates are performed on a regularized system \eqref{rhol}--\eqref{divul} given by a generalized vortex method coupled with a NP system driven by smoothed velocities of the vortex method. These equations have global existence and uniqueness. The choice of the generalized vortex method approximation is motivated by the fact that it preserves the transport nature of the vorticity equation.  The proof of existence and uniqueness is given in Appendix~\ref {sec:lwpreg} and itself is non-standard. There we use an iteration which preserves the nonnegativity of concentrations but loses the coercivity property. This iteration is only capable of producing short time existence and uniqueness of solutions of the vortex method NP system. Then we use the a~priori estimates and the local existence to globally extend the solutions of the vortex method NP system. The existence of weak solutions of the NPE equations is then obtained by removing the approximation. Uniqueness is lost, as it is for the Euler equations.

In Section~\ref{sec:strong}, we prove Theorem~\ref{thm:strongglobal}, which states the global existence and uniqueness of strong solutions which have
\[
\int_0^T \|\nabla \Delta \rho(\tau)\|_{L^2}^2 + \|\nabla \Delta \sigma(\tau)\|_{L^2}^2 \diff{\tau}
\]
finite. This bound is used to prove that the fluid vorticity $\omega = \nabla^\perp \cdot u$ is uniformly bounded. 

The inviscid limit is proved  in Section~\ref{sec:vanish}. In Theorem~\ref{thm:vanish} we show that strong solutions of the NPNS equations converge to strong solutions of the NPE equations on any time interval $[0, T]$. The inviscid limits for lower Sobolev norms are obtained as in \cite{Con86, CF88}. For the higher Sobolev norms, we make use of a regularization method \cite{BS75, CW96, Mas07}. The idea is to regularize the initial data and show that the regularized solution converges to the non regularized NPE solution when the regularization parameter tends to zero, while the difference between the regularized solution and the NPNS solution tends to zero as the fluid kinematic viscosity vanishes. We choose carefully the regularization parameter in terms of the viscosity to offset the loss of derivatives.

\section{Global existence of weak solutions}
\label{sec:weak}

In this section, we prove the global existence of weak solutions to the initial value problem \eqref{NP}--\eqref{initu}.

In the sequel we omit the integration domain $\T^2$ and write $\int f = \int_{\T^2} f(x) \diff{x}$. We denote 
\[
\bar{f} = \frac{1}{|\T^2|} \int f,
\]
the average of a function $f$ over the torus $\T^2$. In inequalities, $C$ and $C'$ denote constants which may change from line to line.

We denote $\rho = c_1 - c_2$, $\sigma = c_1 + c_2$.  Using \eqref{incomp0}, the Nernst-Planck system \eqref{NP} is equivalent to the equations
\begin{align}
\begin{split}
\pt \rho & = - u \cdot \nabla \rho + D (\Delta \rho + \nabla \sigma \cdot \nabla \Phi + \sigma \Delta \Phi),
\end{split} \label{rhot}\\
\begin{split}
\pt \sigma & = - u \cdot \nabla \sigma + D (\Delta \sigma + \nabla \rho \cdot \nabla \Phi + \rho \Delta \Phi).
\end{split} \label{sigmat0}
\end{align}
We have from \eqref{poisson0} that
\begin{align}
\label{poisson}
- \ve \Delta \Phi = \rho,
\end{align}
and from \eqref{euler0}--\eqref{incomp0} that
\begin{align}
& \pt u + u \cdot \nabla u + \nabla p = - (k_B T_K) \rho \nabla \Phi, \label{euler}\\
& \nabla \cdot u = 0. \label{incomp}
\end{align}
The system \eqref{rhot}--\eqref{incomp} has initial data from \eqref{initc}--\eqref{initu},
\begin{align}
\label{initial}
\begin{split}
& \rho(\cdot, 0) = \rho(0) = c_1(0) - c_2(0),\\
& \sigma (\cdot, 0) = \sigma(0) = c_1(0) + c_2(0),\\
& u(\cdot, 0) = u_0,
\end{split}
\end{align}
and 
\[
c_1 = \frac{\sigma + \rho}{2} \quad \text{and} \quad c_2 = \frac{\sigma - \rho}{2}
\]
solve the original Nernst-Planck-Euler system \eqref{NP}--\eqref{incomp0}.

The main theorem of this section is as follows.
\begin{theorem}
\label{thm:weakglobal}
Let $\ve > 0$, $D > 0$, and $r \geq 2$. 
Let $c_1(0), c_2(0) \in W^{1, r}(\T^2)$ be nonnegative functions satisfying \eqref{neut}, and $u_0 \in W^{1, r}(\T^2)$ be divergence free. Then for any $T > 0$, there exist $c_1(x,t)\ge 0$, $c_2(x,t)\ge 0$ and $u(x,t)$ divergence free, such that $c_1-c_2=\rho$ and $c_1+c_2= \sigma$ obey $\rho, \sigma \in L^{\infty}([0, T]; W^{1, r}(\T^2)) \cap L^2(0, T; H^2(\T^2))$,  $u(x,t)$ obeys $u \in L^{\infty}([0, T]; W^{1, r}(\T^2))$, and $(\rho, \sigma, u)$ solve the initial value problem \eqref{rhot}--\eqref{initial} in the sense of distributions. The charge density $\rho$ and total concentration $\sigma$ satisfy the following bounds
\begin{align}
\label{solest1}
\begin{split}
& (i) \quad \|\rho(t)\|_{L^p} + \left\|\sigma(t) - \bar{\sigma}\right\|_{L^p} \leq C e^{- C' t}, \quad \quad \forall p\ge 2,\\
& (ii) \quad \|\nabla \Phi(t)\|_{L^\infty} \leq C e^{- C' t},\\
& (iii) \quad \left\|\nabla \rho(t)\right\|_{L^r} + \left\|\nabla \sigma(t)\right\|_{L^r} \leq C e^{C t},\\
& (iv) \quad \|\nabla \rho(t)\|_{L^2}^2 + \|\nabla \sigma(t)\|_{L^2}^2 + \int_0^t \|\Delta \rho(\tau)\|_{L^2}^2 + \|\Delta \sigma(\tau)\|_{L^2}^2 \diff{\tau} \leq C,
\end{split}
\end{align}
with  constants $C, C' > 0$ depending on $D$, $\ve$, $p$, $r$, and the initial data $\|\rho(0)\|_{L^p}$, $\|\sigma(0)\|_{L^p}$, and $\|u_0\|_{W^{1, r}}$. Moreover, the fluid velocity satisfies the bound
\begin{align}
\label{solest2}
\|\nabla u(t)\|_{L^r} \leq \|u_0\|_{W^{1, r}} + C e^{C'' t},
\end{align}
where $C'' \in \R$ is a constant depending on $\|\rho(0)\|_{L^p}$, $\|\sigma(0)\|_{L^p}$, and $\|u_0\|_{W^{1, r}}$.
\end{theorem}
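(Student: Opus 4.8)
The plan is to realize the weak solution as a limit of solutions of the regularized (generalized vortex method) system \eqref{rhol}--\eqref{divul}, whose global existence, uniqueness, and preservation of $c_1,c_2\ge 0$ are established separately in Appendix~\ref{sec:lwpreg}. Everything then hinges on a~priori bounds for the approximants that are \emph{uniform} in the regularization parameter, namely exactly (i)--(iv) and \eqref{solest2}, which I would prove for the smooth approximants and transfer to the limit. Two structural facts drive all of them: the advecting field is divergence free, so each transport term $\int u\cdot\nabla f\,g$ integrates by parts without residue, and the concentrations stay nonnegative, so that $\sigma\ge|\rho|\ge 0$ pointwise.

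First I would establish the $L^2$ coercivity. Testing \eqref{rhot} with $\rho$ and \eqref{sigmat0} with $\sigma$, the advection terms drop by incompressibility, the Laplacians yield $-D\|\nabla\rho\|_{L^2}^2-D\|\nabla\sigma\|_{L^2}^2$, and after inserting $\Delta\Phi=-\rho/\ve$ the four electric terms collapse to the single dissipative contribution $-\tfrac{D}{\ve}\int\sigma\rho^2\le 0$; this is the coercivity \eqref{L2estin}. Since $\rho$ has zero mean by \eqref{neut} and $\sigma-\bar\sigma$ has zero mean with $\bar\sigma$ conserved in time, Poincar\'e's inequality converts the gradient dissipation into exponential decay of both in $L^2$. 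To reach (i) for all $p\ge 2$ I would test the $c_i$-equations with $c_i^{p-1}$ (admissible because $c_i\ge 0$); the two electric terms combine into $-\tfrac{D(p-1)}{p\ve}\int(c_1^{p}-c_2^{p})(c_1-c_2)\le 0$, since $c_1^p-c_2^p$ and $c_1-c_2$ share a sign. This gives uniform-in-time $L^q$ bounds on $c_1,c_2$, hence on $\rho$ and $\sigma$, which interpolated against the $L^2$ decay upgrade the decay to every $L^p$. Crucially none of these estimates uses the velocity quantitatively. Bound (ii) then follows from $\nabla\Phi=\ve^{-1}\nabla(-\Delta)^{-1}\rho$, elliptic regularity, and the embedding $W^{1,p}\hookrightarrow L^\infty$ valid in two dimensions for $p>2$.

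Next come the coupled higher-order bounds. Testing \eqref{rhot}, \eqref{sigmat0} with $-\Delta\rho$, $-\Delta\sigma$ produces the parabolic gain $-D\|\Delta\rho\|_{L^2}^2-D\|\Delta\sigma\|_{L^2}^2$; the transport terms, after one integration by parts using $\nabla\cdot u=0$, reduce to $-\int(\nabla u\,\nabla\rho)\cdot\nabla\rho$ and are dominated by $\|\nabla u\|_{L^2}$ through the Ladyzhenskaya inequality, the top-order part being absorbed into the $H^2$ dissipation, while the electric terms are handled with (i)--(ii). Passing in parallel to the vorticity $\omega=\nabla^\perp\!\cdot u$, which is transported with the Lorentz forcing $-(k_BT_K)\nabla^\perp\!\cdot(\rho\nabla\Phi)=-(k_BT_K)\nabla^\perp\rho\cdot\nabla\Phi$ (the curl of a gradient vanishing), testing with $\omega$ gives $\tfrac{d}{dt}\|\omega\|_{L^2}\le C\|\nabla\Phi\|_{L^\infty}\|\nabla\rho\|_{L^2}$. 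Because $\|\nabla\Phi\|_{L^\infty}$ decays exponentially by (ii), a single Gr\"onwall argument closes $\|\nabla\rho\|_{L^2}+\|\nabla\sigma\|_{L^2}+\|\omega\|_{L^2}$ uniformly in time and delivers the dissipation integral, which is (iv). The identical scheme at the $L^r$ level, using $\tfrac{d}{dt}\|\omega\|_{L^r}\le(k_BT_K)\|\nabla\Phi\|_{L^\infty}\|\nabla\rho\|_{L^r}$ together with the Biot--Savart bound $\|\nabla u\|_{L^r}\le C\|\omega\|_{L^r}$, yields (iii) and \eqref{solest2}, the Gr\"onwall constants now reflecting the admissible growth of the velocity gradient and hence the $e^{Ct}$ and $e^{C''t}$ factors.

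The main obstacle is the closing of the coupled estimates and the passage to the limit. The $W^{1,r}$ control of $\rho,\sigma$ drives the vorticity while the velocity advects the concentrations, so one must run a single Gr\"onwall inequality on the combined quantity, leaning on the exponential decay of $\nabla\Phi$ from (ii) to prevent runaway growth; this is the delicate point of the a~priori analysis. For the limit, the parabolic smoothing furnishes strong compactness of $\rho,\sigma$ in $L^2(0,T;H^1)$ via the uniform $L^2_tH^2$ bound of (iv), bounds on $\pt\rho,\pt\sigma$ in a negative norm, and Aubin--Lions; this suffices to pass to the limit in the Lorentz force $\rho\nabla\Phi$ and in the electromigration terms. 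The velocity, as for the Euler equations, enjoys only weak compactness, so the nonlinearity $u\otimes u$ and the transport $u\cdot\nabla\rho$ are treated by the strong $L^2$ compactness of $u$ coming from the uniform $W^{1,r}$ bound and the Rellich theorem, exactly as in Yudovich theory. Uniqueness is neither expected nor claimed, just as for weak solutions of the two-dimensional Euler equations.
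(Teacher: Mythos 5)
Your proposal is correct and follows the paper's overall architecture --- global solutions of the vortex-method regularization \eqref{rhol}--\eqref{divul}, a~priori bounds uniform in $\ell$ driven by the positivity of the concentrations and the $L^2$ coercivity, then compactness --- but your route to the $L^p$ decay in (i) is genuinely different from the paper's. The paper first derives the $L^2$ decay \eqref{l2decay} together with the time-integrated $L^3$ bound \eqref{l2time}, bootstraps these through the differential inequality \eqref{rholpinq2} to obtain pointwise decay of $\|\rho\|_{L^p}$, and then runs a Moser-type iteration on dyadic exponents, with the absorption parameter $\delta$ tuned as in \eqref{deltadef}, to propagate the decay to $\|\sigma-\bar{\sigma}\|_{L^p}$. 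You instead test the individual species equations \eqref{NP} with $c_i^{p-1}$ and observe that the electric terms sum to $-\frac{D(p-1)}{p\ve}\int (c_1^p-c_2^p)(c_1-c_2)\le 0$ because $x\mapsto x^p$ is monotone on $[0,\infty)$; this yields uniform-in-time $L^p$ bounds on $c_1,c_2$ for every finite $p$, and interpolating these against the exponential $L^2$ decay gives (i) directly, with a $p$-dependent rate, which is all the theorem claims. This is shorter and avoids the Moser iteration entirely; its only cost is that the constants depend on $\|c_i(0)\|_{L^q}$ for some $q>p$, which is harmless here since the data lie in $W^{1,r}(\T^2)\hookrightarrow L^q(\T^2)$. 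The remaining steps --- the $L^2$ and then $L^r$ vorticity bounds interleaved with the $H^1$ and $W^{1,r}$ estimates for the concentrations (where, as you note, only $\|\omega\|_{L^2}$ enters the gradient estimates via the Ladyzhenskaya inequality), and the Aubin--Lions/Rellich passage to the limit --- match the paper's proof.
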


The strategy of proof consists in proving global existence {\em{and uniqueness}} of solutions of an approximate system. This system preserves positivity of the concentrations and has uniformly the a~priori bounds listed above in \eqref{solest1}, \eqref{solest2}. The weak solutions are obtained by removing the approximation.

In the approximation we regularize the velocity field $u$. Let $J_\ell$ be convolution with a nonnegative mollifier with scale $\ell > 0$ and denote
\[
[u]_\ell = J_\ell * u.
\]
The approximation then consists of the full Nernst-Planck system with regularized velocity coupled with a vortex method approximation of the incompressible Euler equation 
\begin{align}
& \pt \rho^\ell = - [u]_\ell \cdot \nabla \rho^\ell + D (\Delta \rho^\ell + \nabla \sigma^\ell \cdot \nabla \Phi^\ell + \sigma^\ell \Delta \Phi^\ell), \label{rhol}\\
& \pt \sigma^\ell = - [u]_\ell \cdot \nabla \sigma^\ell + D (\Delta \sigma^\ell + \nabla \rho^\ell \cdot \nabla \Phi^\ell + \rho^\ell \Delta \Phi^\ell),
\la{sigel}\\
& - \ve \Delta \Phi^\ell = \rho^\ell, \label{poisel}\\
& \pt u^\ell + [u]_\ell \cdot \nabla u^\ell + [\na u]_{\ell}^*u^\ell + \nabla p^\ell = - (k_B T_K) \rho^\ell \nabla \Phi^\ell, \la{ul}\\
& \nabla \cdot u^\ell = 0, \label{divul}
\end{align}
with initial conditions
\begin{align}
\label{approxinitell}
\begin{split}
& \rho^\ell(\cdot, 0) = \rho(0) \in W^{1, r}(\T^2),\\
& \sigma^\ell(\cdot, 0) = \sigma(0) \in W^{1, r}(\T^2),\\
& u^\ell(\cdot, 0) = u_0 \in W^{1,r}(\T^2) \quad \text{and} \quad \div u_0 = 0.
\end{split}
\end{align}

The system \eqref{rhol}--\eqref{poisel} preserves the positivity $c_i^{\ell}\ge 0$,  its solutions exist globally, are unique and obey the a~priori estimates \eqref{solest1}, \eqref{solest2}. The existence and uniqueness of solutions of \eqref{rhol}--\eqref{approxinitell} is proved in Appendix~\ref{sec:lwpreg}. The proof uses a local existence via an iterative scheme, the a~priori bounds (\ref{solest1}), (\ref{solest2}), and a unique extension result to show global existence. Below we prove the uniform in $\ell$ a~priori bounds.

\beg{theorem}\la{unifellb} Let $\ell > 0$. Let $r\ge 2$, $p\ge2$, $\ve>0$, $T>0$.  The system \eqref{rhol}--\eqref{divul} with initial data \eqref{approxinitell} has unique solutions $(\rho^\ell, \sigma^\ell, u^\ell) \in L^{\infty}([0, T]; W^{1, r}(\T^2)) \times L^{\infty}([0, T]; W^{1, r}(\T^2)) \times L^{\infty}([0, T]; W^{1,r}(\T^2))$ which obey \eqref{solest1}--\eqref{solest2} with constants independent of $\ell$.
\end{theorem}
\beg{proof} 
We show here the a~priori estimates on solutions  using the nonnegativity of $c_i^{\ell}$. For simplicity of notation we drop the $\ell$ superscript and write $[u]$ for the approximation of velocity. We consider first estimates in which the velocity $u$ does not participate in a quantitative fashion.

\vspace{1ex}\noindent{\bf Elliptic estimates.}
From the Poisson equation \eqref{poisel} and Sobolev embeddings, we obtain that
\begin{align}
\label{DPhiest}
\|\nabla \Phi\|_{L^\infty} \leq M_1 \|\rho\|_{L^3},
\end{align}
where $M_1 > 0$ is a constant depending only on $\ve$.

\vspace{1ex}\noindent{\bf $L^p$-bounds on ionic concentrations.} Because of the embedding $H^1\hookrightarrow L^p$, the initial concentrations are in $L^p$ for any $p$. We observe that \eqref{sigel} is equivalent to
\begin{align}
\label{sigmat}
\pt (\sigma - \bar{\sigma}) = - [u] \cdot \nabla (\sigma - \bar{\sigma}) + D \left(\Delta (\sigma - \bar{\sigma}) + \nabla \rho \cdot \nabla \Phi + \rho \Delta \Phi\right).
\end{align}
Note that the average $\bar{\sigma} \geq 0$ since $c_1, c_2 \geq 0$ and $\bar{\sigma}$ is conserved in time due to \eqref{sigel}.
Let $p \ge 2$. We multiply \eqref{rhol} by $\frac{1}{p - 1} \rho |\rho|^{p - 2}$ and \eqref{sigel} by $\frac{1}{p - 1} (\sigma - \bar{\sigma}) |\sigma - \bar{\sigma}|^{p - 2}$, and then integrate by parts,
\begin{align}
\begin{split}
& \frac{1}{p (p - 1)}\frac{\diff}{\diff{t}} \|\rho\|_{L^p}^p = - D \int |\rho|^{p - 2} |\nabla \rho|^2 - D \int |\rho|^{p - 2} (\sigma - \bar{\sigma}) \nabla \rho \cdot \nabla \Phi - D \int |\rho|^{p - 2} \bar{\sigma} \nabla \rho \cdot \nabla \Phi,
\end{split} \label{rholpinq}\\
& \frac{1}{p (p - 1)} \frac{\diff}{\diff{t}} \|\sigma - \bar{\sigma}\|_{L^p}^p = - D \int |\sigma - \bar{\sigma}|^{p - 2} |\nabla (\sigma - \bar{\sigma})|^2 - D \int |\sigma - \bar{\sigma}|^{p - 2} \rho \nabla(\sigma - \bar{\sigma}) \cdot \nabla \Phi. \label{sigmalpinq}
\end{align}
Taking $p = 2$, summing \eqref{rholpinq} and \eqref{sigmalpinq}, and using \eqref{poisson}, we obtain
\begin{align}
\label{L2esteq}
\frac{1}{2} \frac{\diff}{\diff{t}} \left(\|\rho\|_{L^2}^2 + \|\sigma - \bar{\sigma}\|_{L^2}^2\right) + D \left(\|\nabla \rho\|_{L^2}^2 + \|\nabla (\sigma - \bar{\sigma})\|_{L^2}^2\right) + \frac{D}{\ve} \int \sigma \rho^2 = 0.
\end{align}
Recall that the ionic concentrations $c_1, c_2 \geq 0$, so that $\sigma = c_1 + c_2 \geq 0$, and thus, the last term on the left hand side is nonnegative. Furthermore, since $|\rho| = |c_1 - c_2| \leq c_1 + c_2 = \sigma$, we have that
\begin{align}
\label{L2estin}
\frac{1}{2} \frac{\diff}{\diff{t}} \left(\|\rho\|_{L^2}^2 + \|\sigma - \bar{\sigma}\|_{L^2}^2\right) + D \left(\|\nabla \rho\|_{L^2}^2 + \|\nabla (\sigma - \bar{\sigma})\|_{L^2}^2\right) + \frac{D}{\ve} \|\rho\|_{L^3}^3 \leq 0.
\end{align}
Let $C_P$ denote the Poincar\'{e} constant in
\[
\left\|f - \bar{f}\right\|_{L^2} \leq C_P \|\nabla f\|_{L^2},
\]
then by the Poincar\'{e} inequality and Gr\"{o}nwall's inequality, we deduce the following exponential pointwise decay
\begin{align}
\label{l2decay}
\|\rho(t)\|_{L^2}^2 + \|\sigma(t) - \bar{\sigma}\|_{L^2}^2 \leq \left(\|\rho(0)\|_{L^2}^2 + \|\sigma(0) - \bar{\sigma}\|_{L^2}^2\right) e^{- \frac{2 D}{C_P^2} t},
\end{align}
and the bounds
\begin{align}
\label{l2time}
2 D \int_0^t \|\nabla \rho(\tau)\|_{L^2}^2 + \|\nabla \sigma(\tau)\|_{L^2}^2 \diff{\tau} + \frac{2 D}{\ve} \int_0^t \|\rho(\tau)\|_{L^3}^3 \diff{\tau} \leq \|\rho(0)\|_{L^2}^2 + \|\sigma(0) - \bar{\sigma}\|_{L^2}^2.
\end{align}

Now we use these bounds to obtain $L^p$ estimates. From \eqref{rholpinq} and \eqref{poisson} we have
\[
\frac{1}{p (p - 1)}\frac{\diff}{\diff{t}} \|\rho\|_{L^p}^p = - D \int |\rho|^{p - 2} |\nabla \rho|^2 - D \int |\rho|^{p - 2} (\sigma - \bar{\sigma}) \nabla \rho \cdot \nabla \Phi - \frac{D \bar{\sigma}}{(p - 1) \ve} \int |\rho|^p,
\]
then by H\"{o}lder's inequality and Young's inequality, we obtain
\begin{align*}
\frac{1}{p} \frac{\diff}{\diff{t}} \|\rho\|_{L^p}^p + \frac{D}{\ve} \bar{\sigma} \|\rho\|_{L^p}^p + \frac{D (p - 1)}{2} \int |\rho|^{p - 2} |\nabla \rho|^2 \leq \frac{D (p - 1)}{2} \|\rho\|_{L^p}^{p - 2} \|\sigma - \bar{\sigma}\|_{L^p}^2 \|\nabla \Phi\|_{L^\infty}^2.
\end{align*}
Discarding the third term on the left hand side of above inequality, dividing by $\|\rho\|_{L^p}^{p - 2}$ from both sides, and using the Gagliardo-Nirenberg interpolation inequality, we have that
\begin{align}
\label{rholpinq2}
\frac{\diff}{\diff{t}} \|\rho\|_{L^p}^2 + \frac{2 D}{\ve} \bar{\sigma} \|\rho\|_{L^p}^2 \leq D (p - 1) \|\sigma - \bar{\sigma}\|_{L^p}^2 \|\nabla \Phi\|_{L^\infty}^2 \leq D M_2 (p - 1) \|\nabla \sigma\|_{L^2}^{2 - \frac{4}{p}} \|\sigma - \bar{\sigma}\|_{L^2}^{\frac{4}{p}} \|\nabla \Phi\|_{L^\infty}^2,
\end{align}
where $M_2 > 0$ is an interpolation constant.

If $p = 3$, integrating \eqref{rholpinq2} in time and using \eqref{l2decay} and \eqref{DPhiest} we have
\begin{align}
&e^{\frac{2 D \bar{\sigma}}{\ve} t} \|\rho(t)\|_{L^3}^2
 \leq \|\rho(0)\|_{L^3}^2 \notag\\
&\quad\quad+ 2 D M_2 \left(\|\rho(0)\|_{L^2}^2 
 + \|\sigma(0) - \bar{\sigma}\|_{L^2}^2\right)^{\frac{2}{3}} \int_0^t e^{\left(\frac{2 D \bar{\sigma}}{\ve} - \frac{4 D}{3 C_P^2}\right) \tau} \|\nabla \sigma(\tau)\|_{L^2}^{\frac{2}{3}} \|\nabla \Phi(\tau)\|_{L^\infty}^2 \diff{\tau}.
\end{align}
By H\"{o}lder's inequality and \eqref{l2time}, we obtain
\begin{align}
&e^{\frac{2 D \bar{\sigma}}{\ve} t} \|\rho(t)\|_{L^3}^2
 \leq \|\rho(0)\|_{L^3}^2 \notag\\
&\quad 
+ 2 D M_1^2 M_2 \left(\|\rho(0)\|_{L^2}^2 + \|\sigma(0) - \bar{\sigma}\|_{L^2}^2\right)^{\frac{2}{3}} 
\sup_{\tau \in [0, t]} e^{\left(\frac{2 D \bar{\sigma}}{\ve} - \frac{4 D}{3 C_P^2}\right) \tau} 
\bigg(\int_0^t \|\nabla \sigma(\tau)\|_{L^2}^2 \diff{\tau}\bigg)^{\frac{1}{3}} 
\bigg(\int_0^t \|\rho(\tau)\|_{L^3}^3 \diff{\tau}\bigg)^{\frac{2}{3}}\notag\\
&\leq \|\rho(0)\|_{L^3}^2 + \ve^{\frac{2}{3}} M_1^2 M_2 \left(\|\rho(0)\|_{L^2}^2 + \|\sigma(0) - \bar{\sigma}\|_{L^2}^2\right)^2 \max\bigg\{1, e^{\left(\frac{2 D \bar{\sigma}}{\ve} - \frac{4 D}{3 C_P^2}\right) t}\bigg\}. \label{rhotL3}
\end{align}
Combining \eqref{rhotL3} with \eqref{DPhiest}, we deduce the exponential pointwise decay of $\|\nabla \Phi(t)\|_{L^\infty}$,
\begin{align}
\|\nabla \Phi(t)\|_{L^\infty}^2 & \leq M_1^2 \|\rho(t)\|_{L^3}^2\notag\\
& \leq M_1^2 \|\rho(0)\|_{L^3}^2 e^{- \frac{2 D \bar{\sigma}}{\ve} t} + \ve^{\frac{2}{3}} M_1^4 M_2 \left(\|\rho(0)\|_{L^2}^2 + \|\sigma(0) - \bar{\sigma}\|_{L^2}^2\right)^2 \max\bigg\{e^{- \frac{2 D \bar{\sigma}}{\ve} t}, e^{- \frac{4 D}{3 C_P^2} t}\bigg\}.
\label{Phidecay}
\end{align}

Next we use \eqref{Phidecay} back in \eqref{rholpinq2}. By similar arguments as in \eqref{rhotL3}, we obtain pointwise decay estimates for $\|\rho\|_{L^p}$,
\begin{align}
\|\rho(t)\|_{L^p}^2 & \leq \|\rho(0)\|_{L^p}^2 e^{- \frac{2 D \bar{\sigma}}{\ve} t} + C e^{- \frac{2 D \bar{\sigma}}{\ve} t} \bigg(\int_0^t \|\nabla \sigma(\tau)\|_{L^2}^2 \diff{\tau}\bigg)^{1 - \frac{2}{p}} \bigg(\int_0^t \max\bigg\{e^{- \frac{2 D}{C_P^2} p \tau}, e^{\left(\frac{D \bar{\sigma}}{\ve} - \frac{4 D}{3 C_P^2}\right) p \tau}\bigg\} \diff{\tau}\bigg)^{\frac{2}{p}} \notag\\
& \leq \|\rho(0)\|_{L^p}^2 e^{- \frac{2 D \bar{\sigma}}{\ve} t} + C \max\bigg\{e^{- \frac{2 D \bar{\sigma}}{\ve} t}, e^{- \frac{8 D}{3 C_P^2} t}\bigg\}, \label{rholpdecay}
\end{align}
for any $p \geq 2$, uniformly in time $t$. Here, the constant $C > 0$ changing from line to line depends on $\|\rho(0)\|_{L^3}$, $\|\sigma(0) - \bar{\sigma}\|_{L^2}$, $D$, $\ve$,  and $p$.

We now implement a Moser-type iteration argument (see e.g. \cite{BFS14, CL95, CIL20pa}) to obtain pointwise decays of $\|\sigma - \bar{\sigma}\|_{L^p}$ for $p > 2$. By \eqref{sigmalpinq}, we have
\begin{align*}
\frac{1}{p} \frac{\diff}{\diff{t}} \left\||\sigma - \bar{\sigma}|^{\frac{p}{2}}\right\|_{L^2}^2 + D (p - 1) \int |\sigma - \bar{\sigma}|^{p - 2} |\nabla (\sigma - \bar{\sigma})|^2 = - D (p - 1) \int |\sigma - \bar{\sigma}|^{p - 2} \rho \nabla (\sigma - \bar{\sigma}) \cdot \nabla \Phi.
\end{align*}
We use the bounds
\[
D (p - 1) \int |\sigma - \bar{\sigma}|^{p - 2} |\nabla (\sigma - \bar{\sigma})|^2 = \frac{4 D (p - 1)}{p^2} \int \left|\nabla |\sigma - \bar{\sigma}|^{\frac{p}{2}}\right|^2 \geq \frac{2 D}{p} \int \left|\nabla |\sigma - \bar{\sigma}|^{\frac{p}{2}}\right|^2
\]
and
\[
D  (p - 1) \int |\sigma - \bar{\sigma}|^{p - 2} \rho \nabla (\sigma - \bar{\sigma}) \cdot \nabla \Phi \leq 2 D \|\rho\|_{L^p} \|\nabla \Phi\|_{L^\infty} \left\||\sigma - \bar{\sigma}|^{\frac{p}{2}}\right\|_{L^2}^{\frac{p - 2}{p}} \left\|\nabla |\sigma - \bar{\sigma}|^{\frac{p}{2}}\right\|_{L^2}
\]
to deduce 
\[
\frac{\diff}{\diff{t}} \left\||\sigma - \bar{\sigma}|^{\frac{p}{2}}\right\|_{L^2}^2 + 2 D \left\|\nabla |\sigma - \bar{\sigma}|^{\frac{p}{2}}\right\|_{L^2}^2 \leq 2 D p \|\rho\|_{L^p} \|\nabla \Phi\|_{L^\infty} \left\||\sigma - \bar{\sigma}|^{\frac{p}{2}}\right\|_{L^2}^{\frac{p - 2}{p}} \left\|\nabla |\sigma - \bar{\sigma}|^{\frac{p}{2}}\right\|_{L^2}.
\]
By Young's inequality 
\begin{align}
\label{sigmaest0}
\frac{\diff}{\diff{t}} \left\||\sigma - \bar{\sigma}|^{\frac{p}{2}}\right\|_{L^2}^2 + D \left\|\nabla |\sigma - \bar{\sigma}|^{\frac{p}{2}}\right\|_{L^2}^2 \leq D p^2 \|\rho\|_{L^p}^2 \|\nabla \Phi\|_{L^\infty}^2 \left\||\sigma - \bar{\sigma}|^{\frac{p}{2}}\right\|_{L^2}^{2 - \frac{4}{p}}.
\end{align}
The Gagliardo-Nirenberg interpolation inequality and Young's inequality imply that
\begin{align}
\left\||\sigma - \bar{\sigma}|^{\frac{p}{2}}\right\|_{L^2}^2 & \leq M_3 \left[\left\|\nabla |\sigma - \bar{\sigma}|^{\frac{p}{2}}\right\|_{L^2} + \left\||\sigma - \bar{\sigma}|^{\frac{p}{2}}\right\|_{L^1}\right] \left\||\sigma - \bar{\sigma}|^{\frac{p}{2}}\right\|_{L^1}\notag\\
& \leq \delta \left\|\nabla |\sigma - \bar{\sigma}|^{\frac{p}{2}}\right\|_{L^2}^2 + \frac{M_3^2 + 4 M_3 \delta}{4 \delta} \left\||\sigma - \bar{\sigma}|^{\frac{p}{2}}\right\|_{L^1}^2,
\label{secondtermest}
\end{align}
where $M_3 > 0$ is the constant from the interpolation inequality and $\delta$ is given by
\begin{align}
\label{deltadef}
\delta = \left[p^2 \left(\|\rho(0)\|_{L^p}^2 + C (p - 1)\right) \left(M_1^2 \|\rho(0)\|_{L^3}^3 + \ve^{\frac{2}{3}} M_1^2 M_2 (\|\rho(0)\|_{L^2}^2 + \|\sigma(0) - \bar{\sigma}\|_{L^2}^2)^2\right) + 1\right]^{-1}.
\end{align}
Here, $C$ is the same as in \eqref{rholpdecay}, $M_1$ and $M_2$ are the same as in \eqref{Phidecay}. We note that $\delta < 1$.

Multiplying \eqref{secondtermest} by $\frac{D}{\delta}$, we get 
\begin{align}
\label{secondtermest2}
D \left\|\nabla |\sigma - \bar{\sigma}|^{\frac{p}{2}}\right\|_{L^2}^2  \geq \frac{D}{\delta} \left\||\sigma - \bar{\sigma}|^{\frac{p}{2}}\right\|_{L^2}^2 - \frac{D (M_3^2 + 4 M_3 \delta)}{4 \delta^2} \left\||\sigma - \bar{\sigma}|^{\frac{p}{2}}\right\|_{L^1}^2.
\end{align}
Thus, using \eqref{secondtermest2} in \eqref{sigmaest0} yields
\begin{align}
\label{beforeabsorb}
& \frac{\diff}{\diff{t}} \left\||\sigma - \bar{\sigma}|^{\frac{p}{2}}\right\|_{L^2}^2 + \frac{D}{\delta} \left\||\sigma - \bar{\sigma}|^{\frac{p}{2}}\right\|_{L^2}^2\notag\\
&\leq D p^2 \|\rho\|_{L^p}^2 \|\nabla \Phi\|_{L^\infty}^2 \left\||\sigma - \bar{\sigma}|^{\frac{p}{2}}\right\|_{L^2}^{2 - \frac{4}{p}} + \frac{D (M_3^2 + 4)}{4} \left\||\sigma - \bar{\sigma}|^{\frac{p}{2}}\right\|_{L^1}^2\notag\\
&\leq D p^2 \|\rho\|_{L^p}^2 \|\nabla \Phi\|_{L^\infty}^2 + D p^2 \|\rho\|_{L^p}^2 \|\nabla \Phi\|_{L^\infty}^2 \left\||\sigma - \bar{\sigma}|^{\frac{p}{2}}\right\|_{L^2}^2 + \frac{D (M_3^2 + 4 M_3 \delta)}{4 \delta^2} \left\||\sigma - \bar{\sigma}|^{\frac{p}{2}}\right\|_{L^1}^2,
\end{align}
where the last inequality follows from Young's inequality because $2 - \frac{4}{p} < 2$. The choice of $\delta$ in \eqref{deltadef} guarantees that the second term in the last line can be bounded as follows
\begin{align}
& D p^2 \|\rho\|_{L^p}^2 \|\nabla \Phi\|_{L^\infty}^2 \left\||\sigma - \bar{\sigma}|^{\frac{p}{2}}\right\|_{L^2}^2 \notag\\
&\leq  D p^2 \left(\|\rho(0)\|_{L^p}^2 + M_0 (p - 1)\right) \left(M_1^2 \|\rho(0)\|_{L^3}^3 + \ve^{\frac{2}{3}} M_1^2 M_2 (\|\rho(0)\|_{L^2}^2 + \|\sigma(0) - \bar{\sigma}\|_{L^2}^2)^2\right) \left\||\sigma - \bar{\sigma}|^{\frac{p}{2}}\right\|_{L^2}^2 \notag\\
&= \bigg(\frac{D}{\delta} - D\bigg) \left\||\sigma - \bar{\sigma}|^{\frac{p}{2}}\right\|_{L^2}^2, \label{absorb}
\end{align}
where we used the estimates \eqref{Phidecay} and \eqref{rholpdecay} and we bounded the exponential decay by $1$.
Therefore, \eqref{absorb} can be absorbed by the second term in the first line of \eqref{beforeabsorb}, which results in
\[
\frac{\diff}{\diff{t}} \left\|\sigma - \bar{\sigma}\right\|_{L^p}^p + D \left\|\sigma - \bar{\sigma}\right\|_{L^p}^p \leq D p^2 \|\rho\|_{L^p}^2 \|\nabla \Phi\|_{L^\infty}^2 + \frac{D (M_3^2 + 4 M_3 \delta)}{4 \delta^2} \left\|\sigma - \bar{\sigma}\right\|_{L^\frac{p}{2}}^p.
\]
Applying Gr\"{o}nwall's inequality then leads to
\begin{align}
\left\|\sigma(t) - \bar{\sigma}\right\|_{L^p}^p
 &\leq e^{- D t} \bigg[\left\|\sigma(0) - \bar{\sigma}\right\|_{L^p}^p 
 + D p^2 \int_0^t e^{D \tau} \|\rho(\tau)\|_{L^p}^p \|\nabla \Phi(\tau)\|_{L^\infty}^p \diff{\tau}\notag\\
 &\quad\quad\quad\quad
 + \frac{D (M_3^2 + 4 M_3 \delta)}{4 \delta^2} 
 \int_0^t e^{D \tau} \left\|\sigma(\tau) - \bar{\sigma}\right\|_{L^{\frac{p}{2}}}^p \diff{\tau}\bigg].
\label{sigmagronwall}
\end{align}
From \eqref{Phidecay} and \eqref{rholpdecay} it follows that
\begin{align*}
& e^{- \frac{D}{p} t} \bigg(D p^2 \int_0^t e^{D \tau} \|\rho(\tau)\|_{L^p}^p \|\nabla \Phi(\tau)\|_{L^\infty}^p \diff{\tau}\bigg)^{\frac{1}{p}}
\leq C e^{- \frac{D}{p} t} \bigg(\int_0^t e^{D \tau} \max\bigg\{e^{- \frac{2 D \bar{\sigma}}{\ve} p \tau}, e^{- \frac{2 D}{3 C_P^2} p \tau}\bigg\} \diff{\tau}\bigg)^{\frac{1}{p}}\\
&\leq C e^{- \frac{D}{p} t} \max \bigg\{\bigg(\int_0^t e ^{\left(D - \frac{2 D \bar{\sigma} p}{\ve}\right) \tau} \diff{\tau}\bigg)^{\frac{1}{p}}, \bigg(\int_0^t e ^{\left(D - \frac{2 D p}{3 C_P^2}\right) \tau} \diff{\tau}\bigg)^{\frac{1}{p}}\bigg\}
\leq C \max\bigg\{e^{- \frac{D}{p} t}, e^{- \frac{2 D \bar{\sigma}}{\ve} t}, e^{- \frac{2 D}{3 C_P^2} t}\bigg\},
\end{align*}
where $C > 0$ changes from line to line and is a constant depending on $\|\rho(0)\|_{L^p}$, $\|\sigma(0) - \bar{\sigma}\|_{L^2}$, $D$, $\ve$, and $p$.

We estimate the last integral in \eqref{sigmagronwall} by induction. We first recall that $\|\sigma(t) - \bar{\sigma}\|_{L^2}$ decays exponentially in time (cf.~\eqref{l2decay}). Then we assume $p = 2^{j + 1}$ for $j \in \N$ and deduce that $\|\sigma(t) - \bar{\sigma}\|_{L^{2^{j + 1}}}$ decays exponentially from the inductive assumption that $\|\sigma(t) - \bar{\sigma}\|_{L^{2^j}}$ decays exponentially. In fact, for each fixed $j \in \N$, $p = 2^{j + 1}$,
\[
\bigg(\frac{D (M_3^2 + 4 M_3 \delta)}{\delta^2}\bigg)^{\frac{1}{p}}
\]
is some finite positive constant that depends on $p$. Thus, since the integrand of the following integral grows slower than $e^{D p \tau}$, we have that
\[
e^{- \frac{D}{p} t} \bigg(\int_0^t e^{D \tau} \|\sigma(\tau) - \bar{\sigma}\|_{L^\frac{p}{2}}^p \diff{\tau}\bigg)^{\frac{1}{p}} \leq C e^{- C' t},
\]
for some $C, C' > 0$. Therefore, using this information, we deduce from \eqref{sigmagronwall} that $\|\sigma(t) - \bar{\sigma}\|_{L^p}$ decays exponentially for each fixed $p \geq 2$ of the form $p = 2^j$ ($j \in \N$). Then by interpolation, we obtain that $\|\sigma(t) - \bar{\sigma}\|_{L^p}$ decays exponentially for all $p \geq 2$,
\begin{align}
\label{sigmalpdecay}
\left\|\sigma(t) - \bar{\sigma}\right\|_{L^p} \leq C e^{- C' t},
\end{align}
for some constants $C, C' > 0$ depending on $\|\rho(0)\|_{L^3 \cap L^p}$, $\|\sigma(0) - \bar{\sigma}\|_{L^p}$, $D$, $\ve$, and $p$.
This concludes the proof of inequalities $(i)$ and $(ii)$ of \eqref{solest1}. 

\vspace{1ex}\noindent{\bf $L^2$-bounds on fluid vorticity.} We take the curl of \eqref{ul}. We denote $\omega = \nabla^\perp \cdot u$ with $\nabla^\perp = (- \partial_y, \px)$. The vortex approximation respects exactly the vorticity equation.  The vorticity equation is
\begin{align}
& \pt \omega + [u] \cdot \nabla \omega = - (k_B T_K) \nabla^\perp \rho \cdot \nabla \Phi \la{vort}\\
& u = \nabla^\perp \Delta^{-1} \omega. \label{BS}
\end{align}
It is well-known that 
\[
\|\nabla u\|_{L^r} \leq C \|\omega\|_{L^r}
\]
for $1 \leq r < \infty$.

Now we turn to the a~priori estimates for the vorticity equation \eqref{vort}.
Let $r \ge 2$. We multiply \eqref{vort} by $r \omega |\omega|^{r - 2}$, integrate by parts, and use H\"{o}lder's inequality
\begin{align*}
\frac{\diff}{\diff{t}} \|\omega\|_{L^r}^r & = - r k_B T_K \int \omega |\omega|^{r - 2} \nabla^\perp \rho \cdot \nabla \Phi\\
& \leq C \|\omega\|_{L^r}^{r - 1} \|\nabla \rho\|_{L^r} \|\nabla \Phi\|_{L^\infty},
\end{align*}
By Gr\"{o}nwall's inequality, we deduce 
\begin{align}
\label{omegalr0}
\|\omega(t)\|_{L^r} \leq \|\omega(0)\|_{L^r} + C \int_0^t \|\nabla \rho(\tau)\|_{L^r} \|\nabla \Phi(\tau)\|_{L^\infty} \diff{\tau}.
\end{align}
In particular, if $r = 2$, using \eqref{l2time} and \eqref{Phidecay}, we obtain
\begin{align}
\label{omegal2}
\begin{split}
\|\omega(t)\|_{L^2} & \leq \|\omega(0)\|_{L^2} + C \bigg(\int_0^t \|\nabla \rho(\tau)\|_{L^2}^2 \diff{\tau}\bigg)^{\frac{1}{2}} \bigg(\int_0^t \|\nabla \Phi(\tau)\|_{L^\infty}^2 \diff{\tau}\bigg)^{\frac{1}{2}}\\
& \leq \|\omega(0)\|_{L^2} + C,
\end{split}
\end{align}
with $C$ depending on $\|\rho(0)\|_{L^3}$, $\|\sigma(0) - \bar{\sigma}\|_{L^2}$, $\ve$, and $D$.

\vspace{1ex}\noindent {\bf $W^{1, r}$-bounds on ionic concentrations.} We now estimate $L^r$-norms of $\nabla \rho$ and $\nabla \sigma$. Taking gradients of \eqref{rhot} and \eqref{sigmat0}, we have 
\begin{align}
\label{gradeqn}
\begin{split}
\pt \nabla \rho & = - (\nabla [u])^* \nabla \rho - [u] \cdot \nabla \nabla \rho + D \nabla (\Delta \rho + \nabla \sigma \cdot \nabla \Phi + \sigma \Delta \Phi),\\
\pt \nabla \sigma & = - (\nabla [u])^* \nabla \sigma - [u] \cdot \nabla \nabla \sigma + D \nabla (\Delta \sigma + \nabla \rho \cdot \nabla \Phi + \rho \Delta \Phi),
\end{split}
\end{align}

At the $L^2$-level, we take the scalar product of the two equations in \eqref{gradeqn} with $\nabla \rho$ and $\nabla \sigma$ respectively, integrate by parts, and use \eqref{poisson} to get
\begin{align*}
& \frac{1}{2} \frac{\diff}{\diff{t}} \left(\|\nabla \rho\|_{L^2}^2 + \|\nabla \sigma\|_{L^2}^2\right) + D \left(\|\Delta \rho\|_{L^2}^2 + \|\Delta \sigma\|_{L^2}^2\right) + \frac{D}{\ve} \int \sigma |\nabla \rho|^2\\
&\quad= - \int \nabla \rho \cdot (\nabla [u])^* \nabla \rho - \int \nabla \sigma \cdot (\nabla [u])^* \nabla \sigma - D \int \Delta \rho (\nabla \sigma \cdot \nabla \Phi)\\
&\qquad\qquad - D \int \Delta \sigma (\nabla \rho \cdot \nabla \Phi) - \frac{D}{\ve} \int \rho \nabla \rho \cdot \nabla \sigma - \frac{2 D}{\ve} \int \rho |\nabla \rho|^2.
\end{align*}

Using H\"{o}lder's inequalities for $L^2$-$L^2$-$L^\infty$ or $L^2$-$L^4$-$L^4$, the elliptic estimates, the Gagliardo-Nirenberg interpolation inequality, and Young's inequality, we have
\begin{align}
\label{DrsL2}
\begin{split}
& \frac{\diff}{\diff{t}} \left(\|\nabla \rho\|_{L^2}^2 + \|\nabla \sigma\|_{L^2}^2\right) + \frac{D}{2} \left(\|\Delta \rho\|_{L^2}^2 + \|\Delta \sigma\|_{L^2}^2\right)\\
&\quad\leq C \left(\|\nabla u\|_{L^2} + \|\rho\|_{L^2}\right) \left(\|\nabla \rho\|_{L^4}^2 + \|\nabla \sigma\|_{L^4}^2\right) +  C \|\nabla \Phi\|_{L^\infty}^2 \left(\|\nabla \rho\|_{L^2}^2 + \|\nabla \sigma\|_{L^2}^2\right)\\
&\quad\leq C \left(\|\nabla u\|_{L^2} + \|\rho\|_{L^2}\right) \left(\|\nabla \rho\|_{L^2} \|\Delta \rho\|_{L^2} + \|\nabla \rho\|_{L^2}^2 + \|\nabla \sigma\|_{L^2} \|\Delta \sigma\|_{L^2} + \|\nabla \sigma\|_{L^2}^2\right)\\
&\quad \quad\quad + C \|\nabla \Phi\|_{L^\infty}^2 \left(\|\nabla \rho\|_{L^2}^2 + \|\nabla \sigma\|_{L^2}^2\right)\\
&\quad \leq  \frac{D}{4} \left(\|\Delta \rho\|_{L^2}^2 + \|\Delta \sigma\|_{L^2}^2\right) + C \left(1 + \|\omega\|_{L^2}^2 + \|\rho\|_{L^2}^2 + \|\nabla \Phi\|_{L^\infty}^2\right) \left(\|\nabla \rho\|_{L^2}^2 + \|\nabla \sigma\|_{L^2}^2\right).
\end{split}
\end{align}
We can absorb the first term in the last line of \eqref{DrsL2} into the dissipation term. Then we integrate \eqref{DrsL2} in time and apply \eqref{l2decay}, \eqref{l2time}, \eqref{Phidecay}, and \eqref{omegal2} to deduce that
\begin{align}
\la{nabl2}
\begin{split}
& \|\nabla \rho(t)\|_{L^2}^2 + \|\nabla \sigma(t)\|_{L^2}^2 + \frac{D}{4} \int_0^t \|\Delta \rho(\tau)\|_{L^2}^2 + \|\Delta \sigma(\tau)\|_{L^2}^2 \diff{\tau}\\
&\quad\leq \|\nabla \rho(0)\|_{L^2}^2 + \|\nabla \sigma(0)\|_{L^2}^2\\
&\qquad\qquad + C \sup_{\tau \in [0, t]} \left(1 + \|\omega(\tau)\|_{L^2}^2 + \|\rho(\tau)\|_{L^2}^2 + \|\nabla \Phi(\tau)\|_{L^\infty}^2\right) \int_0^t \|\nabla \rho(\tau)\|_{L^2}^2 + \|\nabla \sigma(\tau)\|_{L^2}^2 \diff{\tau}\\
\end{split}
\end{align}
and thus we have
\be
\label{nablal2bdd}
\|\nabla \rho(t)\|_{L^2}^2 + \|\nabla \sigma(t)\|_{L^2}^2 + \frac{D}{4} \int_0^t \|\Delta \rho(\tau)\|_{L^2}^2 + \|\Delta \sigma(\tau)\|_{L^2}^2 \diff{\tau}\le C
\ee
where $C > 0$ is a constant depending on the initial data.

If $r > 2$, we take the scalar product of the two equations in \eqref{gradeqn} with $\nabla \rho |\nabla \rho|^{r - 2}$ and $\nabla \sigma |\nabla \sigma|^{r - 2}$ respectively, integrate over $\T^2$, and integrate by parts to obtain
\begin{align}
\begin{split}
\frac{1}{r} \frac{\diff}{\diff{t}} \|\nabla \rho\|_{L^r}^r & = - \int |\nabla \rho|^{r - 2} \nabla \rho \cdot (\nabla [u])^* \nabla \rho - D \int |\nabla \rho|^{r - 2} |\nabla \nabla \rho|^2 - \frac{4 D (r - 2)}{r^2} \int \left|\nabla |\nabla \rho|^{\frac{r}{2}}\right|^2\\
& \qquad - D \int |\nabla \rho|^{r - 2} \Delta \rho \nabla \sigma \cdot \nabla \Phi - D \int |\nabla \rho|^{r - 2} \Delta \rho \sigma \Delta \Phi - D \int \nabla \rho \cdot \nabla |\nabla \rho|^{r - 2} \sigma \Delta \Phi\\
& \qquad  - D (r - 2) \int \nabla \rho \cdot (\nabla \nabla \rho) \cdot \nabla \rho |\nabla \rho|^{r - 4} \nabla \sigma \cdot \nabla \Phi
\end{split} \label{DrhoLp}
\end{align}
and
\begin{align}
\begin{split}
\frac{1}{r} \frac{\diff}{\diff{t}} \|\nabla \sigma\|_{L^r}^r & = - \int |\nabla \sigma|^{r - 2} \nabla \sigma \cdot (\nabla [u])^* \nabla \sigma - D \int |\nabla \sigma|^{r - 2} |\nabla \nabla \sigma|^2 - \frac{4 D (r - 2)}{r^2} \int \left|\nabla |\nabla \sigma|^{\frac{r}{2}}\right|^2\\
& \qquad - D \int |\nabla \sigma|^{r - 2} \Delta \sigma \nabla \rho \cdot \nabla \Phi - D \int |\nabla \sigma|^{r - 2} \Delta \sigma \rho \Delta \Phi - D \int \nabla \sigma \cdot \nabla |\nabla \sigma|^{r - 2} \rho \Delta \Phi\\
& \qquad  - D (r - 2) \int \nabla \sigma \cdot (\nabla \nabla \sigma) \cdot \nabla \sigma |\nabla \sigma|^{r - 4} \nabla \rho \cdot \nabla \Phi.
\end{split} \label{DsigmaLp}
\end{align}
For simplicity, we denote
\[
Y = \|\nabla \rho\|_{L^r}^r + \|\nabla \sigma\|_{L^r}^r = \|R\|_{L^2}^2 + \|S\|_{L^2}^2, \quad R = |\nabla \rho|^{\frac{r}{2}}, \quad S = |\nabla \sigma|^{\frac{r}{2}}.
\]
Adding \eqref{DrhoLp} to \eqref{DsigmaLp} and using \eqref{poisson}, H\"{o}lder's inequality, and Young's inequality, we obtain
\begin{align*}
 \frac{\diff}{\diff{t}} Y + \Dissip_1 \leq ~& r \int |\nabla [u]| (R^2 + S^2) + \frac{D r}{2} \int |\nabla \rho|^{r - 2} |\nabla \nabla \rho|^2 + \frac{D r}{2} \int |\nabla \sigma|^{r - 2} |\nabla \nabla \sigma|^2\\
& + 2 D r ((r - 2)^2 + 1) \|\nabla \Phi\|_{L^\infty}^2 \left(\|\nabla \rho\|_{L^r}^{r - 2} \|\nabla \sigma\|_{L^r}^2 + \|\nabla \sigma\|_{L^r}^{r - 2} \|\nabla \rho\|_{L^r}^2\right)\\
& + \frac{2 D r}{\ve^2} ((r - 2)^2 + 1) \int \rho^2 \left(|\nabla \rho|^{r - 2} \sigma^2 + |\nabla \sigma|^{r - 2} \rho^2\right),
\end{align*}
where $\Dissip_1$ is the dissipation term
\begin{align*}
\Dissip_1 = D r \int |\nabla \rho|^{r - 2} |\nabla \nabla \rho|^2  + D r \int |\nabla \sigma|^{r - 2} |\nabla \nabla \sigma|^2 + \frac{4 D (r - 2)}{r} \left(\|\nabla R\|_{L^2}^2 + \|\nabla S\|_{L^2}^2\right).
\end{align*}
Therefore, we have 
\begin{align}
\label{Yest1}
\begin{split}
\frac{\diff}{\diff{t}} Y + \Dissip_2 & \leq r \bigg(\int |\nabla [u]| (R^2 + S^2)\bigg) + 2 D r ((r - 2)^2 + 1) \|\nabla \Phi\|_{L^\infty}^2 Y\\
& \quad + \frac{2 D r}{\ve^2} ((r - 2)^2 + 1) \int \rho^2 (\rho^2 + \sigma^2) \left(R^{\frac{2 r - 4}{r}} + S^{\frac{2 r - 4}{r}}\right),
\end{split}
\end{align}
where
\begin{align*}
\Dissip_2 & = \frac{D r}{2} \int |\nabla \rho|^{r - 2} |\nabla \nabla \rho|^2  + \frac{D r}{2} \int |\nabla \sigma|^{r - 2} |\nabla \nabla \sigma|^2 + \frac{4 D (r - 2)}{r} \left(\|\nabla R\|_{L^2}^2 + \|\nabla S\|_{L^2}^2\right)\\
& \geq \frac{4 D (r - 2)}{r} \left(\|R\|_{H^1}^2 - \|R\|_{L^2}^2 + \|S\|_{H^1}^2 - \|S\|_{L^2}^2\right).
\end{align*}
We first note that from the Biot-Savart law \eqref{BS}, the Gagliardo-Nirenberg interpolation inequality, and Young's inequality
\begin{align}
\label{Yest1-1}
\begin{split}
\int |\nabla [u]| (R^2 + S^2) & \leq \|\nabla [u]\|_{L^2} \left(\|R\|_{L^4}^2 + \|S\|_{L^4}^2\right)\\
& \leq \|\nabla [u]\|_{L^2} \left(\|R\|_{L^2} \|\nabla R\|_{L^2} + \|R\|_{L^2}^2 + \|S\|_{L^2} \|\nabla S\|_{L^2} + \|S\|_{L^2}^2\right)\\
& \leq \frac{D (r - 2)}{r^2} \left(\|R\|_{H^1}^2 + \|S\|_{H^1}^2\right) + C \|\omega\|_{L^2}^2 \left(\|R\|_{L^2}^2 + \|S\|_{L^2}^2\right).
\end{split}
\end{align}
By H\"{o}lder's inequality with exponents $\fr{r}{2}$ and $\fr{2}{2-\fr{4}{r}}$, we have
\begin{align}
\label{Yest1-2}
\begin{split}
& \frac{2 D r}{\ve^2} ((r - 2)^2 + 1) \int \rho^2 (\rho^2 + \sigma^2) \left(R^{\frac{2 r - 4}{r}} + S^{\frac{2 r - 4}{r}}\right)\\
&\quad\leq C \Big(\|R\|_{L^2}^{2} + \|S\|_{L^2}^{2}\Big)^{\frac{r - 2}{r}} \Big(\|\rho\|_{L^{2r}} + \|\sigma\|_{L^{2r}}\Big)^{4}\\
&\quad\leq \|R\|_{L^2}^2 + \|S\|_{L^2}^2 + C \Big(\|\rho\|_{L^{2r}} + \|\sigma\|_{L^{2r}}\Big)^{2r}.
\end{split}
\end{align}
Using the inequalities \eqref{Yest1-1}, \eqref{Yest1-2} in \eqref{Yest1}, we get
\be
\frac{\diff}{\diff{t}} Y \leq C \left(1 + \|\nabla \Phi\|_{L^\infty}^2   + \|\omega\|_{L^2}^2\right) Y + \Big(\|\rho\|_{L^{2r}} + \|\sigma\|_{L^{2r}}\Big)^{2r}.
\la{odeY}
\ee
By the bounds \eqref{omegal2}, \eqref{rholpdecay}, and \eqref{sigmalpdecay} and Gr\"{o}nwall's inequality, we then deduce that $Y(t)$ has at most exponential growth in time $t > 0$,
\begin{align}
\begin{split}
&Y(t) =  \|\nabla \rho(t)\|_{L^r}^r + \|\nabla \sigma(t)\|_{L^r}^r\\
&\leq \exp\bigg(C \int_0^t 1 + \|\nabla \Phi(\tau)\|_{L^\infty}^2 + \|\omega(\tau)\|_{L^2}^2\diff{\tau}\bigg)\left[\|\nabla \rho(0)\|_{L^r}^r + \|\nabla \sigma(0)\|_{L^r}^r + \int_0^t\Big(\|\rho(\tau)\|_{L^{2r}} + \|\sigma(\tau)\|_{L^{2r}}\Big)^{2r}\diff{\tau}\right ]
\end{split} \notag\\
&\leq C e^{C' t},\label{Ygrowth}
\end{align}
where the constants $C, C' > 0$ depend on $\|\rho(0)\|_{L^3 \cap L^{2r}}$, $\|\sigma(0) - \bar{\sigma}\|_{L^{2r}}$, $D$, $\ve$, and $r$.

\vspace{1ex}\noindent {\bf $L^r$-bounds on fluid vorticity.} Using the bounds \eqref{Phidecay} and \eqref{Ygrowth} in \eqref{omegalr0} allows us to improve the global estimates for the vorticity $\omega$ from $L^2$ to $L^r$ for $r \geq 2$ and all $t > 0$,
\begin{align*}
\|\omega(t)\|_{L^r} \leq \|\omega(0)\|_{L^r} + C \int_0^t \|\nabla \rho(\tau)\|_{L^r} \|\nabla \Phi(\tau)\|_{L^\infty} \diff{\tau} \leq \|\omega(0)\|_{L^r} + C e^{C'' t},
\end{align*}
for some constant $C'' \in \R$ depending on the initial data.
\end{proof}

The proof of Theorem \ref{thm:weakglobal} follows from Theorem \ref{unifellb}.
Because the a~priori estimates \eqref{solest1}--\eqref{solest2} are uniform in $\ell$ we can find a sequence $\ell_m$ such that, as $m \to \infty$
\begin{align*}
& \rho^{\ell_m} \rightharpoonup \rho \quad \text{in} \ L^2(\T^2),\\
& \sigma^{\ell_m} \rightharpoonup \sigma \quad \text{in} \ L^2(\T^2),\\
& u^{\ell_m} \rightharpoonup u \quad \text{in} \ L^2(\T^2),
\end{align*}
for $t \geq 0$. Then using lower semi-continuity of the norms and the Aubin--Lions lemma, we complete the proof the theorem.

\beg{remark} Although the approximate system has unique solutions for each $\ell$, we do not know uniqueness of the weak solutions. In fact, these solutions correspond to weak solutions of 2D Euler with vorticity in $L^{r}$, $r<\infty$.
\end{remark}

\section{Global existence of strong solutions}
\label{sec:strong}

In this section, we establish the global in time existence and uniqueness of strong solutions of the Nernst-Planck-Euler system \eqref{rhot}--\eqref{incomp} with initial data \eqref{initial}. Our result, given in Theorem \ref{thm:strongglobal} below, is stated in the case when $\rho(0), \sigma(0), u_0 \in H^3$, but it also holds for any $H^s$ with $s > 2$, using a similar argument.

\begin{theorem}
\label{thm:strongglobal}
Let $\ve > 0$ and $D > 0$. Let $c_1(0), c_2(0) \in H^3(\T^2)$ be nonnegative functions satisfying \eqref{neut}, and $u_0 \in H^3(\T^2)$ be divergence free. Then for any $T > 0$, there exists a unique strong solution $\rho, \sigma \in L^{\infty}([0, T]; H^3(\T^2)) \cap L^2(0, T; H^4(\T^2))$ and $u \in L^{\infty}([0, T]; H^3(\T^2))$ to the initial value problem \eqref{rhot}--\eqref{initial}. In addition to the bounds \eqref{solest1} for the ionic concentrations, we also have for any $t > 0$
\begin{align}
\label{Hsbdd1}
\begin{split}
& \|\Delta \rho(t)\|_{L^2} + \|\Delta \sigma(t)\|_{L^2} + \int_0^t \|\nabla \Delta \rho(\tau)\|_{L^2}^2 + \|\nabla \Delta \sigma(\tau)\|_{L^2}^2 \leq C,\\
& \|\nabla \Delta \rho(t)\|_{L^2} + \|\nabla \Delta \sigma(t)\|_{L^2} + \int_0^t \|\Delta^2 \rho(\tau)\|_{L^2}^2 + \|\Delta^2 \sigma(\tau)\|_{L^2}^2 \leq C e^{C e^t},\\
\end{split}
\end{align}
where $C > 0$ depending only on $\ve$, $D$, and the initial data. For the fluid vorticity $\omega = \nabla^\perp \cdot u$ and velocity $u$, in addition to the estimates \eqref{solest2}, we also have for any $t > 0$
\begin{align}
\label{Hsbdd2}
\|\omega(t)\|_{L^\infty} \leq C \qquad \text{and} \qquad \|u(t)\|_{H^3} \leq e^{C e^{C t}}.
\end{align}
\end{theorem}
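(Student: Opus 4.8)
The plan is to bootstrap from the already-established weak solution bounds \eqref{solest1}--\eqref{solest2} to higher-order estimates, proceeding level by level in the Sobolev hierarchy. Since the theorem states solutions in $H^3$, the core of the work is to propagate control of $\|\Delta\rho\|_{L^2}$, $\|\Delta\sigma\|_{L^2}$ and $\|\nabla\Delta\rho\|_{L^2}$, $\|\nabla\Delta\sigma\|_{L^2}$, while simultaneously controlling the vorticity $\omega$ and velocity $u$ in the appropriate norms. The guiding principle, inherited from Section~\ref{sec:weak}, is that the velocity $u$ should only enter the concentration estimates through a quantity we can control \emph{a priori}, and that we should exploit parabolic smoothing in the NP equations to absorb the top-order derivatives into the dissipation. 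First I would run the energy estimate at the level of $\Delta\rho, \Delta\sigma$: apply $\Delta$ to \eqref{rhot}--\eqref{sigmat0}, multiply by $\Delta\rho$ and $\Delta\sigma$, integrate by parts, and use \eqref{poisson} to replace $\Delta\Phi$ by $-\rho/\ve$. The transport term $u\cdot\nabla\rho$ produces commutators involving $\nabla u$ (hence $\omega$), and the coupling terms $\nabla\sigma\cdot\nabla\Phi+\sigma\Delta\Phi$ generate products that I would estimate by Gagliardo--Nirenberg interpolation and Young's inequality, absorbing $\|\nabla\Delta\rho\|_{L^2}^2$ and $\|\nabla\Delta\sigma\|_{L^2}^2$ into the dissipation. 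The remaining lower-order factors are controlled by the $L^\infty_t W^{1,r}$ bounds \eqref{solest1}(iii)--(iv) and by $\|\omega\|_{L^2}$, giving the first line of \eqref{Hsbdd1} via Gr\"onwall.

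The next step is the crucial one for the vorticity bound $\|\omega\|_{L^\infty}$. Because the vorticity obeys the transport equation \eqref{vort} with forcing $-(k_BT_K)\nabla^\perp\rho\cdot\nabla\Phi$, I would estimate $\|\omega(t)\|_{L^\infty}$ directly along characteristics: integrating \eqref{vort} gives
\[
\|\omega(t)\|_{L^\infty}\le\|\omega(0)\|_{L^\infty}+C\int_0^t\|\nabla\rho(\tau)\|_{L^\infty}\|\nabla\Phi(\tau)\|_{L^\infty}\diff\tau.
\]
Here $\|\nabla\Phi\|_{L^\infty}$ decays by \eqref{solest1}(ii), and $\|\nabla\rho\|_{L^\infty}$ must be controlled by the $H^3$-regularity of $\rho$ through the embedding $H^2\hookrightarrow L^\infty$ in two dimensions (so $\|\nabla\rho\|_{L^\infty}\lesssim\|\nabla\rho\|_{H^2}\lesssim\|\nabla\Delta\rho\|_{L^2}+\lot$). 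This is precisely why the integrability of $\int_0^T\|\nabla\Delta\rho\|_{L^2}^2+\|\nabla\Delta\sigma\|_{L^2}^2\diff\tau$ from the first line of \eqref{Hsbdd1} is needed: combined with the exponential decay of $\nabla\Phi$, Cauchy--Schwarz in time yields a \emph{finite} integral and hence a uniform-in-time bound $\|\omega(t)\|_{L^\infty}\le C$, which gives $\|\nabla u\|_{L^\infty}$ control up to the usual logarithmic/BMO correction.

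With $\|\omega\|_{L^\infty}$ in hand, I would return to the highest level and estimate $\|\nabla\Delta\rho\|_{L^2}$, $\|\nabla\Delta\sigma\|_{L^2}$: apply $\nabla\Delta$ to \eqref{rhot}--\eqml{sigmat0}, or equivalently differentiate the $\Delta$-level equations once more, pair against $\nabla\Delta\rho$ and $\nabla\Delta\sigma$, and absorb $\|\Delta^2\rho\|_{L^2}^2+\|\Delta^2\sigma\|_{L^2}^2$ into the dissipation. The transport commutator now costs $\|\nabla u\|_{L^\infty}\sim\|\omega\|_{L^\infty}$, which is bounded, and the nonlinear coupling terms are again handled by interpolation using the already-controlled lower norms. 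This produces a Gr\"onwall inequality whose coefficient involves $\|\omega\|_{L^\infty}$ (bounded) and lower-order quantities, yielding the double-exponential bound $Ce^{Ce^t}$ in the second line of \eqref{Hsbdd1}. Finally, the velocity estimate $\|u(t)\|_{H^3}\le e^{Ce^{Ct}}$ follows by differentiating the Euler equation \eqref{euler}: the standard $H^3$ energy estimate for 2D Euler closes with a coefficient controlled by $\|\nabla u\|_{L^\infty}$ (hence by $\|\omega\|_{L^\infty}$ and a log factor of $\|u\|_{H^3}$), driven by the Lorentz forcing $\rho\nabla\Phi$ whose $H^3$ norm is controlled by the concentration bounds already proved; the resulting log-Gr\"onwall produces the double-exponential growth.

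The main obstacle I anticipate is the vorticity bound: unlike the $L^r$ estimates of Section~\ref{sec:weak}, controlling $\|\omega\|_{L^\infty}$ requires $\|\nabla\rho\|_{L^\infty}$, which sits \emph{above} the $W^{1,r}$ regularity available from the weak theory and forces the passage to the $H^3$ (i.e.\ $\nabla\Delta\rho\in L^2$) level. The delicate point is the \emph{order} of the bootstrap: one must first secure the time-integrability of $\|\nabla\Delta\rho\|_{L^2}^2$ at the $\Delta$-energy level \emph{without} yet knowing $\|\omega\|_{L^\infty}$ (using only $\|\omega\|_{L^2}$, which suffices there because the transport commutator at that level only needs $\nabla u\in L^2$ paired against $L^4$-interpolated derivatives), then use that integrability to close the $\|\omega\|_{L^\infty}$ estimate, and only afterward go to the $\nabla\Delta$-level where $\|\omega\|_{L^\infty}$ is genuinely needed. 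Verifying that each transport-commutator term at the intermediate level truly closes with only $\|\omega\|_{L^2}$ — rather than secretly requiring $\|\nabla u\|_{L^\infty}$ — is the technical crux, and rests on the two-dimensional interpolation inequalities together with the parabolic dissipation. Uniqueness then follows by a standard energy difference argument for two strong solutions, using the established bounds to control the nonlinear terms.
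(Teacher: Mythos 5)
Your proposal follows essentially the same three-stage bootstrap as the paper: (i) the $\Delta$-level energy estimate for $\rho,\sigma$ closed with only $\|\omega\|_{L^2}$ and the Brezis--Gallouet/logarithmic-Gr\"onwall device, yielding $\int_0^t\|\nabla\Delta\rho\|_{L^2}^2+\|\nabla\Delta\sigma\|_{L^2}^2\,\diff\tau\le C$; (ii) the uniform bound $\|\omega(t)\|_{L^\infty}\le C$ obtained by Cauchy--Schwarz in time against the exponential decay of $\|\nabla\Phi\|_{L^\infty}$, using $H^2\hookrightarrow L^\infty$ exactly as in \eqref{omegaLinf}; (iii) the top-order estimates with a log-Gr\"onwall producing the double exponential. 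You also correctly identify the order of the bootstrap as the crux, and the uniqueness argument is the paper's.

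The one place where your plan as written would snag is the final ordering. You propose to close the $\nabla\Delta$-level concentration estimate before the velocity $H^3$ estimate, asserting that the transport commutator there ``costs $\|\nabla u\|_{L^\infty}\sim\|\omega\|_{L^\infty}$.'' It does not: expanding $\nabla\Delta(u\cdot\nabla\rho)$ and pairing with $\nabla\Delta\rho$ produces terms of the form $\int\Delta u\cdot(\nabla\rho\,\Delta^2\rho)$ and $\int\nabla u:(\nabla\nabla\rho\,\Delta^2\rho)$ (the terms $I_{2,1}$ in \eqref{DtH3}), whose estimates require $\|\Delta u\|_{L^2}$, i.e. $\|u\|_{H^2}$ --- and $\|\omega\|_{L^\infty}$ controls $\|\nabla u\|_{L^p}$ for $p<\infty$ but not $\|\nabla^2u\|_{L^2}$. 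The paper resolves this by establishing $\|u(t)\|_{H^3}\le e^{Ce^{Ct}}$ \emph{first}: that estimate closes because its forcing only requires $\int_0^t\|\rho\|_{H^3}^2\,\diff\tau\le C$, which is already available from stage (i) via \eqref{L2H3bdd}, together with $\|\omega\|_{L^\infty}$ and the Calder\'on--Zygmund logarithmic inequality \eqref{BKM}. Only then is the $H^3$ concentration estimate run, with $\|u\|_{H^2}^2$ appearing as a (doubly exponentially growing) Gr\"onwall coefficient. Your scheme is repaired simply by swapping these last two steps (or coupling them in a single Gr\"onwall); with that correction the argument matches the paper's.
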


Before we proceed with the proof, we first recall several inequalities that will be used repeatedly. The Gagliardo-Nirenberg interpolation inequality for $L^4(\T^2)$ between $L^2(\T^2)$ and $\dot{H}^1(\T^2)$ is
\begin{align}
\label{L4interp}
\|f\|_{L^4} \leq C \|\nabla f\|_{L^2}^{\frac{1}{2}} \|f\|_{L^2}^{\frac{1}{2}} + C \|f\|_{L^2}.
\end{align}
For $s > 2$, the calculus Sobolev inequality for the $H^s(\T^2)$-norm ($s > 2$) of a product,
\begin{align}
\label{fgHm}
\|f g\|_{H^s} \leq \|f\|_{L^\infty} \|g\|_{H^s} + \|g\|_{L^\infty} \|f\|_{H^s},
\end{align}
and the Calder\'{o}n-Zygmund estimates \cite{BKM84, Kat86},
\begin{align}
\label{BKM}
\|\nabla u\|_{L^\infty} \leq C \|\omega\|_{L^\infty} \left(1 + \log\left(1 + \|u\|_{H^s}\right)\right).
\end{align}
Recall also the inequality due to Brezis-Gallouet \cite{BG80} and Brezis-Wainger \cite{BW80}
\begin{align}
\label{BG}
\|\nabla f\|_{L^\infty} \leq C \|\nabla f\|_{H^1} \bigg(1 +  \left[\log(1 + \|\nabla f\|_{H^2})\right]^{\frac{1}{2}}\bigg).
\end{align}

\begin{proof}[Proof of Theorem~\ref{thm:strongglobal}]
The embedding $H^3(\T^2) \hookrightarrow W^{1, p}(\T^2)$ (for $p \geq 1$) and Theorem~\ref{thm:weakglobal} imply the global existence of weak solutions together with the bounds \eqref{solest1}--\eqref{solest2} in the interval $[0, T]$ for any $T > 0$. We only need to show the propagation of $H^3$-regularity and the uniqueness of the solutions.  The construction of solutions is similar to the construction of weak solutions, see the proof of Theorem~\ref{thm:weakglobal}.

\vspace{1ex}\noindent {\bf $H^2$-bounds on ionic concentrations.} We multiply \eqref{rhot} and \eqref{sigmat0} by $\Delta^2 \rho$ and $\Delta^2 \sigma$, respectively, and integrate over $\T^2$. Integration-by-parts and \eqref{poisson} give
\begin{align}
\label{DtH2}
\begin{split}
& \frac{\diff}{\diff{t}} \left(\|\Delta \rho\|_{L^2}^2 + \|\Delta \sigma\|_{L^2}^2\right) + D \left(\|\nabla \Delta \rho\|_{L^2}^2 + \|\nabla \Delta \sigma\|_{L^2}^2\right) + \frac{D}{\ve} \int \sigma |\Delta \rho|^2\\
&\quad=  \int \nabla \rho \cdot (\nabla u \nabla \Delta \rho) + \int \nabla \sigma \cdot (\nabla u \nabla \Delta \sigma) + \int u \cdot (\nabla \nabla \rho \nabla \Delta \rho) + \int u \cdot (\nabla \nabla \sigma \nabla \Delta \sigma)\\
&\quad\quad + D \int \nabla \Delta \sigma \cdot \nabla \Phi \Delta \rho + 2 D \int \nabla \nabla \sigma : \nabla \nabla \Phi \Delta \rho - \frac{3 D}{\ve} \int \nabla \sigma \cdot \nabla \rho \Delta \rho\\
&\quad\quad + D \int \nabla \Delta \rho \cdot \nabla \Phi \Delta \sigma + 2 D \int \nabla \nabla \rho : \nabla \nabla \Phi \Delta \sigma - \frac{3 D}{\ve} \int |\nabla \rho|^2 \Delta \sigma - \frac{3 D}{\ve} \int \rho \Delta \sigma \Delta \rho\\
&\quad= I_{1, 1} + I_{1, 2} + I_{1, 3} + I_{1, 4} + I_{1, 5},
\end{split}
\end{align}
where
\begin{align*}
I_{1, 1} & = \int \nabla \rho \cdot (\nabla u \nabla \Delta \rho) + \int \nabla \sigma \cdot (\nabla u \nabla \Delta \sigma),\\
I_{1, 2} & = \int u \cdot (\nabla \nabla \rho \nabla \Delta \rho) + \int u \cdot (\nabla \nabla \sigma \nabla \Delta \sigma),\\
I_{1, 3} & = D \int \nabla \Delta \sigma \cdot \nabla \Phi \Delta \rho  + D \int \nabla \Delta \rho \cdot \nabla \Phi \Delta \sigma,\\
I_{1, 4} & = - \frac{3 D}{\ve} \int \nabla \sigma \cdot \nabla \rho \Delta \rho - \frac{3 D}{\ve} \int |\nabla \rho|^2 \Delta \sigma,\\
I_{1, 5} & = 2 D \int \nabla \nabla \sigma : \nabla \nabla \Phi \Delta \rho + 2 D \int \nabla \nabla \rho : \nabla \nabla \Phi \Delta \sigma - \frac{3 D}{\ve} \int \rho \Delta \sigma \Delta \rho.\\
\end{align*}

For the term $I_{1, 1}$, we apply H\"{o}lder's inequality, Young's inequality, and \eqref{BG} to obtain
\begin{align}
\begin{split}
I_{1, 1} & \leq \|\nabla u\|_{L^2} \left(\|\nabla \rho\|_{L^\infty} \|\nabla \Delta \rho\|_{L^2} + \|\nabla \sigma\|_{L^\infty} \|\nabla \Delta \sigma\|_{L^2}\right)\\
& \leq \frac{5}{2 D} \|\omega\|_{L^2}^2 \left(\|\nabla \rho\|_{L^\infty}^2 + \|\nabla \sigma\|_{L^\infty}^2\right) + \frac{D}{10} \left(\|\nabla \Delta \rho\|_{L^2}^2 + \|\nabla \Delta \sigma\|_{L^2}^2\right)\\
& \leq C \|\omega\|_{L^2}^2 \|\nabla \rho\|_{H^1}^2 \bigg(1 +  \left[\log(1 + \|\nabla \rho\|_{H^2})\right]\bigg) + C \|\omega\|_{L^2}^2 \|\nabla \sigma\|_{H^1}^2 \bigg(1 +  \left[\log(1 + \|\nabla \sigma\|_{H^2})\right]\bigg)\\
& \qquad  + \frac{D}{10} \left(\|\nabla \Delta \rho\|_{L^2}^2 + \|\nabla \Delta \sigma\|_{L^2}^2\right),
\end{split}
\end{align}
which implies
\begin{align}
\label{DtH2est1}
\begin{split}
I_{1,1}& \leq C \|\omega\|_{L^2}^2 \left(\|\nabla \rho\|_{H^1}^2 + \|\nabla \sigma\|_{H^1}^2\right) + C \|\omega\|_{L^2}^2 \left(\|\nabla \rho\|_{H^1}^2 \|\nabla \rho\|_{H^2} + \|\nabla \sigma\|_{H^2}^2 \|\nabla \sigma\|_{H^2}\right)\\
& \qquad  + \frac{D}{10} \left(\|\nabla \Delta \rho\|_{L^2}^2 + \|\nabla \Delta \sigma\|_{L^2}^2\right)\\
& \leq C \|\omega\|_{L^2}^2 \left(\|\nabla \rho\|_{H^1}^2 + \|\nabla \sigma\|_{H^1}^2\right) + C \|\omega\|_{L^2}^4 \left(\|\nabla \rho\|_{H^1}^4 + \|\nabla \sigma\|_{H^1}^4\right)\\
&\qquad + \frac{D}{5} \left(\|\nabla \Delta \rho\|_{L^2}^2 + \|\nabla \Delta \sigma\|_{L^2}^2\right).
\end{split}
\end{align}
To estimate the term $I_{1, 2}$, we use H\"{o}lder's inequality, Young's inequality, and the Gagliardo-Nirenberg interpolation inequality \eqref{L4interp},
\begin{align}
\label{DtH2est2}
\begin{split}
I_{1, 2} & \leq \|u\|_{L^4} \left(\|\nabla \nabla \rho\|_{L^4} \|\nabla \Delta \rho\|_{L^2} + \|\nabla \nabla \sigma\|_{L^4} \|\nabla \Delta \sigma\|_{L^2}\right)\\
& \leq \frac{5}{2 D} \|u\|_{L^4}^2 \left(\|\nabla \nabla \rho\|_{L^4}^2 + \|\nabla \nabla \sigma\|_{L^4}^2\right) + \frac{D}{10} \left(\|\nabla \Delta \rho\|_{L^2}^2 + \|\nabla \Delta \sigma\|_{L^2}^2\right)\\
& \leq C \|u\|_{L^4}^2 \left(\|\nabla \nabla \rho\|_{L^2} + \|\nabla \nabla \sigma\|_{L^2}\right) \left(\|\nabla \Delta \rho\|_{L^2} + \|\nabla \Delta \sigma\|_{L^2}\right) + \frac{D}{10} \left(\|\nabla \Delta \rho\|_{L^2}^2 + \|\nabla \Delta \sigma\|_{L^2}^2\right)\\
& \leq C \|u\|_{L^4}^2 \left(\|\nabla \nabla \rho\|_{L^2}^2 + \|\nabla \nabla \sigma\|_{L^2}^2\right) + \frac{D}{5} \left(\|\nabla \Delta \rho\|_{L^2}^2 + \|\nabla \Delta \sigma\|_{L^2}^2\right)\\
& \leq C \|u\|_{L^2} \left(\|u\|_{L^2} + \|\nabla u\|_{L^2}\right) \left(\|\nabla \nabla \rho\|_{L^2}^2 + \|\nabla \nabla \sigma\|_{L^2}^2\right) + \frac{D}{5} \left(\|\nabla \Delta \rho\|_{L^2}^2 + \|\nabla \Delta \sigma\|_{L^2}^2\right).
\end{split}
\end{align}
The estimates for the other terms in \eqref{DtH2} are similar. By H\"{o}lder's inequality, Young's inequality, the elliptic estimates, and interpolation \eqref{L4interp}, we obtain
\begin{align}
\label{DtH2est3}
\begin{split}
I_{1, 3} & \leq D \|\nabla \Phi\|_{L^\infty} \left(\|\nabla \Delta \sigma\|_{L^2} \|\Delta \rho\|_{L^2} + \|\nabla \Delta \rho\|_{L^2} \|\Delta \sigma\|_{L^2}\right)\\
& \leq \frac{5}{4} \|\nabla \Phi\|_{L^\infty}^2 \left(\|\Delta \rho\|_{L^2}^2 + \|\Delta \sigma\|_{L^2}^2\right) + \frac{D}{5} \left(\|\nabla \Delta \rho\|_{L^2}^2 + \|\nabla \Delta \sigma\|_{L^2}^2\right),
\end{split}
\end{align}
\begin{align}
\label{DtH2est4}
\begin{split}
I_{1, 4} & \leq \frac{3 D}{\ve} \|\nabla \rho\|_{L^4} \|\nabla \sigma\|_{L^4} \|\Delta \rho\|_{L^2} + \frac{3 D}{\ve} \|\nabla \rho\|_{L^4}^2 \|\Delta \sigma\|_{L^2}\\
& \leq C \left(\|\nabla \rho\|_{L^2} + \|\nabla \sigma\|_{L^2}\right) \left(\|\Delta \rho\|_{L^2}^2 + \|\Delta \sigma\|_{L^2}^2\right) \\
&\qquad+ C \left(\|\nabla \rho\|_{L^2}^2 + \|\nabla \sigma\|_{L^2}^2\right) \left(\|\Delta \rho\|_{L^2} + \|\Delta \sigma\|_{L^2}\right)\\
& \leq C \left(\|\nabla \rho\|_{L^2} + \|\nabla \sigma\|_{L^2}\right) \left(\|\Delta \rho\|_{L^2}^2 + \|\Delta \sigma\|_{L^2}^2\right)\\
&\qquad + C \left(\|\nabla \rho\|_{L^2}^4 + \|\nabla \sigma\|_{L^2}^4\right) + C \left(\|\Delta \rho\|_{L^2}^2 + \|\Delta \sigma\|_{L^2}^2\right),
\end{split}
\end{align}
and
\begin{align}
\label{DtH2est5}
\begin{split}
I_{1, 5} & \leq C \|\nabla \nabla \Phi\|_{L^2} \|\Delta \rho\|_{L^4} \|\Delta \sigma\|_{L^4} + C \|\rho\|_{L^2} \|\Delta \rho\|_{L^4} \|\Delta \sigma\|_{L^4}\\
& \leq C \|\rho\|_{L^2} \left(\|\Delta \rho\|_{L^2} + \|\Delta \sigma\|_{L^2}\right) \left(\|\nabla \Delta \rho\|_{L^2} + \|\nabla \Delta \sigma\|_{L^2}\right)\\
& \leq C \|\rho\|_{L^2}^2 \left(\|\Delta \rho\|_{L^2}^2 + \|\Delta \sigma\|_{L^2}^2\right) + \frac{D}{5} \left(\|\nabla \Delta \rho\|_{L^2}^2 + \|\nabla \Delta \sigma\|_{L^2}^2\right).
\end{split}
\end{align}
Using the estimates \eqref{DtH2est1} and \eqref{DtH2est2}--\eqref{DtH2est5} in \eqref{DtH2}, we conclude 
\begin{align}
\label{DtH2est02}
\begin{split}
& \frac{1}{2} \frac{\diff}{\diff{t}} \left(\|\Delta \rho\|_{L^2}^2 + \|\Delta \sigma\|_{L^2}^2\right) + \frac{D}{5} \left(\|\nabla \Delta \rho\|_{L^2}^2 + \|\nabla \Delta \sigma\|_{L^2}^2\right) + \frac{D}{\ve} \int \sigma |\Delta \rho|^2\\
&\quad\leq C \left(1 + \|\nabla \rho\|_{L^2} + \|\nabla \sigma\|_{L^2} + \|\omega\|_{L^2}^2 + \|u\|_{L^2}^2 + \|\nabla \Phi\|_{L^\infty}^2 + \|\rho\|_{L^2}^2\right) \left(\|\Delta \rho\|_{L^2}^2 + \|\Delta \sigma\|_{L^2}^2\right)\\
&\quad\qquad + C \left(\|\nabla \rho\|_{L^2}^4 + \|\nabla \sigma\|_{L^2}^4\right) + C \|\omega\|_{L^2}^4 \left(\|\nabla \rho\|_{H^1}^4 + \|\nabla \sigma\|_{H^1}^4\right).
\end{split}
\end{align}
For simplicity, we denote
\begin{align}
\label{ZW}
\begin{split}
Z & = \|\Delta \rho\|_{L^2}^2 + \|\Delta \sigma\|_{L^2}^2,\\
W_1 & = 1 + \|\nabla \rho\|_{L^2} + \|\nabla \sigma\|_{L^2} + \|\omega\|_{L^2}^2 + \|u\|_{L^2}^2 + \|\nabla \Phi\|_{L^\infty}^2 + \|\rho\|_{L^2}^2,\\
W_2 & = \|\nabla \rho\|_{L^2}^4 + \|\nabla \sigma\|_{L^2}^4,\\
W_3 & = \|\omega\|_{L^2}^4.
\end{split}
\end{align}
From \eqref{nablal2bdd}, it follows that
\begin{align}
\label{Ztimeintbdd}
\int_0^t Z(\tau) \diff{\tau} \leq C.
\end{align}
Using \eqref{l2decay}, \eqref{Phidecay}, \eqref{omegal2}, \eqref{nablal2bdd}, and the $L^2$-estimate for the velocity field $u$,
\[
\|u(t)\|_{L^2}^2 \leq \|u_0\|_{L^2}^2 + C \int_0^t \|\rho(\tau)\|_{L^2} \|\nabla \Phi(\tau)\|_{L^\infty} \diff{\tau} \leq C \int_0^t e^{- C \tau} \diff{\tau} \leq C,
\]
we obtain
\begin{align}
\sup_{\tau \in [0, t]} \left(W_1(\tau) + W_3(\tau)\right) \leq C.
\end{align}
The bounds \eqref{l2time} and \eqref{nablal2bdd} imply that
\begin{align}
\label{W2bdd}
\int_0^t W_2(\tau) \diff{\tau} \leq \sup_{\tau \in [0, t]} \left(\|\nabla \rho(\tau)\|_{L^2}^2 + \|\nabla \sigma(\tau)\|_{L^2}^2\right) \cdot \int_0^t \|\nabla \rho(\tau)\|_{L^2}^2 + \|\nabla \sigma(\tau)\|_{L^2}^2 \diff{\tau} \leq C.
\end{align}
Using the notation \eqref{ZW} and the fact that $\sigma \geq 0$, the inequality \eqref{DtH2est02} leads to
\[
\frac{\diff\left(\log(1 + Z)\right)}{\diff{t}} = \frac{1}{1 + Z} \frac{\diff{Z}}{\diff{t}} \leq C \frac{1}{1 + Z} \left(W_1 Z + W_2 + W_3 Z^2\right) \leq C \left(W_1 Z + W_2 + W_3 Z\right)
\]
for any $t > 0$.
Integrating this inequality in time, and applying the bounds \eqref{Ztimeintbdd}--\eqref{W2bdd}, we obtain
\begin{align*}
& \log\left(1 + Z(t)\right)\\
&\quad\leq \log\left(1 + Z(0)\right) + C \sup_{\tau \in [0, t]} W_1(\tau) \cdot \int_0^t Z(\tau) \diff{\tau} + C \int_0^t W_2(\tau) \diff{\tau} + C \sup_{\tau \in [0, t]} W_3(\tau) \cdot \int_0^t Z(\tau) \diff{\tau}\\
&\quad\leq C,
\end{align*}
which implies
\begin{align}
\label{deltal2bdd}
\|\Delta \rho(t)\|_{L^2}^2 + \|\Delta \sigma(t)\|_{L^2}^2 \leq C.
\end{align}
Going back to \eqref{DtH2est02}, we also deduce
\begin{align}
\label{L2H3bdd}
\int_0^t \|\nabla \Delta \rho(\tau)\|_{L^2}^2 + \|\nabla \Delta \sigma(\tau)\|_{L^2}^2 \leq C.
\end{align}

\vspace{1ex}\noindent {\bf $L^\infty$-bounds on fluid vorticity.} We take the limit $r \to \infty$ in \eqref{omegalr0} to get
\begin{align}
\label{omegaLinf}
\|\omega(t)\|_{L^\infty} \leq \|\omega_0\|_{L^\infty} + C \int_0^t \|\nabla \rho(\tau)\|_{L^\infty} \|\nabla \Phi(\tau)\|_{L^\infty} \diff{\tau}.
\end{align}
Now, we use the Sobolev embedding $H^2(\T^2) \hookrightarrow L^\infty(\T^2)$, the Cauchy-Schwarz inequality, \eqref{Phidecay}, \eqref{nablal2bdd}, and \eqref{L2H3bdd} to obtain
\begin{align*}
& \int_0^t \|\nabla \rho(\tau)\|_{L^\infty} \|\nabla \Phi(\tau)\|_{L^\infty} \diff{\tau}
\leq C \int_0^t \|\nabla \rho(\tau)\|_{H^2} \|\nabla \Phi(\tau)\|_{L^\infty} \diff{\tau}\\
&\quad\leq C \sup_{\tau \in [0, t]} \|\nabla \rho(\tau)\|_{L^2} \cdot \int_0^t \|\nabla \Phi(\tau)\|_{L^\infty} \diff{\tau} + C \bigg(\int_0^t \|\nabla \Delta \rho(\tau)\|_{L^2}^2 \diff{\tau}\bigg)^{\frac{1}{2}} \bigg(\int_0^t \|\nabla \Phi(\tau)\|_{L^\infty}^{2} \diff{\tau}\bigg)^{\frac{1}{2}}\leq C,
\end{align*}
which, along with \eqref{omegaLinf}, implies 
\begin{align}
\label{omegabdd}
\|\omega(t)\|_{L^\infty} \leq C.
\end{align}

\vspace{1ex}\noindent {\bf $H^3$-bounds on fluid velocity.} We test \eqref{euler} with $- \Delta^3 u$ and use the standard estimates for the incompressible Euler equations (see e.g. \cite{MB02}) as well as the calculus Sobolev inequality \eqref{fgHm} to obtain
\begin{align}
\label{Dalphau}
\begin{split}
\frac{\diff}{\diff{t}} \|\nabla \Delta u\|_{L^2}^2 & \leq C \|\nabla u\|_{L^\infty} \|u\|_{H^3}^2 + (k_B T_K) \int \partial^\alpha (\rho \nabla \Phi) \cdot \partial^\alpha u\\
& \leq C \|\nabla u\|_{L^\infty} \|u\|_{H^3}^2 + C \|u\|_{H^3} \left(\|\rho\|_{L^\infty} \|\nabla \Phi\|_{H^3} + \|\nabla \Phi\|_{L^\infty} \|\rho\|_{H^3}\right).
\end{split}
\end{align}
From the elliptic estimate $\|\nabla \Phi\|_{H^3} \leq C \left(\|\nabla \Phi\|_{L^2} + \|\Delta \rho\|_{L^2}\right)$,  Sobolev embeddings, and \eqref{BKM} it follows that
\begin{align*}
\frac{\diff}{\diff{t}} \|u\|_{H^3} & \leq C \|\nabla u\|_{L^\infty} \|u\|_{H^3} + C \left(\|\rho\|_{L^\infty} \|\nabla \Phi\|_{H^3} + \|\nabla \Phi\|_{L^\infty} \|\rho\|_{H^3}\right)\\
& \leq C \|\nabla u\|_{L^\infty} \|u\|_{H^3} + C \left(\|\rho\|_{L^\infty} \|\nabla \Phi\|_{L^2} + \|\rho\|_{L^\infty} \|\Delta \rho\|_{L^2} + \|\nabla \Phi\|_{L^\infty} \|\rho\|_{H^3}\right)\\
& \leq C \|\omega\|_{L^\infty} \left(1 + \log\left(1 + \|u\|_{H^3}\right)\right) \|u\|_{H^3} + C \left(\|\rho\|_{L^\infty} \|\Delta \rho\|_{L^2} + \|\nabla \Phi\|_{L^\infty} \|\rho\|_{H^3}\right).
\end{align*}
Therefore, we obtain
\begin{align}
\label{uH3ineq}
\frac{\diff \log\left(1 + \log(1 + \|u\|_{H^3})\right)}{\diff{t}} & = \frac{1}{1 + \log(1 + \|u\|_{H^3})} \frac{1}{1 + \|u\|_{H^3}} \frac{\diff}{\diff{t}} \|u\|_{H^3}\notag\\
& \leq C \|\omega\|_{L^\infty} + C \left(\|\rho\|_{L^\infty} \|\Delta \rho\|_{L^2} + \|\nabla \Phi\|_{L^\infty} \|\rho\|_{H^3}\right).
\end{align}
We integrate \eqref{uH3ineq} in time and use the bounds \eqref{Phidecay}, \eqref{nablal2bdd}, \eqref{L2H3bdd}, and \eqref{omegabdd} to deduce
\begin{align*}
& \log\left(1 + \log(1 + \|u(t)\|_{H^3})\right)\\
&\quad\leq \log\left(1 + \log(1 + \|u_0\|_{H^3})\right) + C \int_0^t \|\omega(\tau)\|_{L^\infty} \diff{\tau} + C \int_0^t \|\rho(\tau)\|_{L^\infty} \|\Delta \rho(\tau)\|_{L^2} + \|\nabla \Phi(\tau)\|_{L^\infty} \|\rho(\tau)\|_{H^3} \diff{\tau}\\
&\quad\leq \log\left(1 + \log(1 + \|u_0\|_{H^3})\right) + C \int_0^t \|\omega(\tau)\|_{L^\infty} \diff{\tau} + C \int_0^t \|\rho(\tau)\|_{H^2}^2 \diff{\tau}\\
& \qquad + C \bigg(\int_0^t \|\nabla \Phi(\tau)\|_{L^\infty}^2 \diff{\tau}\bigg)^{\frac{1}{2}} \bigg(\int_0^t \|\rho(\tau)\|_{H^3}^2 \diff{\tau}\bigg)^{\frac{1}{2}}\\
&\quad\leq C (1 + t).
\end{align*}
Hence, we conclude the double exponential bound of $\|u(t)\|_{H^3}$,
\begin{align}
\label{uH3bdd}
\|u(t)\|_{H^3} \leq e^{C e^{C t}}.
\end{align}

\vspace{1ex}\noindent {\bf $H^3$-bounds on ionic concentrations.}
We multiply \eqref{rhot} and \eqref{sigmat0} by $- \Delta^3 \rho$ and $- \Delta^3 \sigma$ respectively, integrate over $\T^2$. We integrate by parts and use \eqref{poisson} to obtain
\begin{align}
\label{DtH3}
\begin{split}
& \frac{1}{2} \frac{\diff}{\diff{t}} \left(\|\nabla \Delta \rho\|_{L^2}^2 + \|\nabla \Delta \sigma\|_{L^2}^2\right) + D \left(\|\Delta^2 \rho\|_{L^2}^2 + \|\Delta^2 \sigma\|_{L^2}^2\right) + \frac{D}{\ve} \int \sigma |\nabla \Delta \rho|^2\\
&\quad= I_{2, 1} + I_{2, 2} + I_{2, 3} + I_{2, 4} + I_{2, 5} + I_{2, 6},
\end{split}
\end{align}
where
\begin{align*}
I_{2, 1} & = \int \Delta u \cdot (\nabla \rho \Delta^2 \rho + \nabla \sigma \Delta^2 \sigma) + \int u \cdot (\nabla \Delta \rho \Delta^2 \rho + \nabla \Delta \sigma \Delta^2 \sigma) + 2 \int \nabla u : (\nabla \nabla \rho \Delta^2 \rho + \nabla \nabla \sigma \Delta^2 \sigma),\\
I_{2, 2} & = D \int \nabla \Delta \rho \cdot \nabla \nabla \Delta \sigma \nabla \Phi + D \int \nabla \Delta \sigma \cdot \nabla \nabla \Delta \rho \nabla \Phi,\\
I_{2, 3} & = - \frac{2 D}{\ve} \int \nabla \Delta \rho \cdot (\nabla \nabla \rho \nabla \sigma) - \frac{5 D}{\ve} \int \nabla \Delta \sigma \cdot (\nabla \nabla \rho \nabla \rho) - \frac{3 D}{\ve} \int \nabla \Delta \rho \cdot (\nabla \nabla \sigma \nabla \rho)\\
& \quad - \frac{D}{\ve} \int \nabla \Delta \rho \cdot \nabla \rho \Delta \sigma - \frac{3 D}{\ve} \int \nabla \Delta \rho \cdot \nabla \sigma \Delta \rho - \frac{4 D}{\ve} \int \nabla \Delta \sigma \cdot \nabla \rho \Delta \rho,\\
I_{2, 4} & = 2 D \int \nabla \rho \cdot (\nabla \nabla \nabla \sigma : \nabla \nabla \Phi) + 2 D \int \nabla \sigma \cdot (\nabla \nabla \nabla \rho : \nabla \nabla \Phi),\\
I_{2, 5} & = - \frac{3 D}{\ve} \int \nabla \Delta \rho \cdot \nabla \Delta \sigma \rho,\\
I_{2, 6} & = 2 D \int \nabla \rho \cdot (\nabla \nabla \nabla \Phi : \nabla \nabla \sigma) + 2 D \int \nabla \sigma \cdot (\nabla \nabla \nabla \Phi : \nabla \nabla \rho).
\end{align*}
The estimates of these terms are similar to the estimates of the terms in \eqref{DtH2}. For the terms involving velocity $u$, we use H\"{o}lder's inequalities for $L^2$-$L^2$-$L^\infty$ or $L^2$-$L^4$-$L^4$, the Sobolev embedding $H^2(\T^2) \hookrightarrow L^\infty(\T^2)$, the interpolation inequality, and Young's inequality,
\begin{align}
\begin{split}
I_{2, 1} & \leq \|\Delta u\|_{L^2} \left(\|\nabla \rho\|_{L^\infty} \|\Delta^2 \rho\|_{L^2} + \|\nabla \sigma\|_{L^\infty} \|\Delta^2 \sigma\|_{L^2}\right) + \|u\|_{L^\infty} \left(\|\Delta^2 \rho\|_{L^2} \|\nabla \Delta \rho\|_{L^2} + \|\Delta^2 \sigma\|_{L^2} \|\nabla \Delta \sigma\|_{L^2}\right)\\
& \quad+ 2 \|\nabla u\|_{L^4} \left(\|\nabla \nabla \rho\|_{L^4} \|\Delta^2 \rho\|_{L^2} + \|\nabla \nabla \sigma\|_{L^4} \|\Delta^2 \sigma\|_{L^2}\right)\\
& \leq \frac{5}{4 D} \|\Delta u\|_{L^2}^2 \left(\|\nabla \rho\|_{L^\infty}^2 + \|\nabla \sigma\|_{L^\infty}^2\right) + C \|u\|_{H^2}^2 \left(\|\nabla \Delta \rho\|_{L^2}^2 + \|\nabla \Delta \sigma\|_{L^2}^2\right)\end{split} \notag\\
& \quad + \frac{3 D}{5} \left(\|\Delta^2 \rho\|_{L^2}^2 + \|\Delta^2 \sigma\|_{L^2}^2\right) + C \left(\|\nabla \nabla u\|_{L^2} \|\nabla u\|_{L^2} + \|\nabla u\|_{L^2}^2\right) \left(\|\nabla \nabla \rho\|_{L^2}^2 + \|\nabla \nabla \sigma\|_{L^2}^2\right). \label{DtH3est1}
\end{align}
For the term $I_{2, 2}$, we use H\"{o}lder's inequality and Young's inequality to get
\begin{align}
\begin{split}
I_{2, 2} & \leq D \|\nabla \Phi\|_{L^\infty} \left(\|\nabla \Delta \rho\|_{L^2} \|\nabla \nabla \Delta \sigma\|_{L^2} + \|\nabla \Delta \sigma\|_{L^2} \|\nabla \nabla \Delta \rho\|_{L^2}\right)\\
& \leq C \|\nabla \Phi\|_{L^\infty}^2 \left(\|\nabla \Delta \rho\|_{L^2}^2 + \|\nabla \Delta \sigma\|_{L^2}^2\right) + \frac{D}{5} \left(\|\Delta^2 \rho\|_{L^2}^2 + \|\Delta^2 \sigma\|_{L^2}^2\right).
\end{split}
\end{align}
By H\"{o}lder's inequality for $L^2$-$L^4$-$L^4$ and the Gagliardo-Nirenberg inequalities
\begin{align*}
\|\Delta f\|_{L^4} & \leq C \|\nabla \Delta f\|_{L^2}^{\frac{3}{4}} \|\nabla f\|_{L^2}^{\frac{1}{4}} + C \|\nabla f\|_{L^2},\\
\|\nabla f\|_{L^4} & \leq C \|\nabla \Delta f\|_{L^2}^{\frac{1}{4}} \|\nabla f\|_{L^2}^{\frac{3}{4}} + C \|\nabla f\|_{L^2},
\end{align*}
we obtain
\begin{align}
\begin{split}
I_{2, 3} & \leq C \|\nabla \Delta \rho\|_{L^2} \|\Delta \rho\|_{L^4} \|\nabla \sigma\|_{L^4} + C \|\nabla \Delta \sigma\|_{L^2} \|\Delta \rho\|_{L^4} \|\nabla \rho\|_{L^4} + C \|\nabla \Delta \rho\|_{L^2} \|\Delta \sigma\|_{L^4} \|\nabla \rho\|_{L^4}\\
& \leq C \|\nabla \Delta \rho\|_{L^2} (\|\nabla \Delta \rho\|_{L^2}^{3/4} \|\nabla \rho\|_{L^2}^{1/4} + \|\nabla \rho\|_{L^2}) (\|\nabla \Delta \sigma\|_{L^2}^{1/4} \|\nabla \sigma\|_{L^2}^{3/4} + \|\nabla \sigma\|_{L^2})\\
& \quad + C \|\nabla \Delta \sigma\|_{L^2} (\|\nabla \Delta \rho\|_{L^2}^{3/4} \|\nabla \rho\|_{L^2}^{1/4} + \|\nabla \rho\|_{L^2}) (\|\nabla \Delta \rho\|_{L^2}^{1/4} \|\nabla \rho\|_{L^2}^{3/4} + \|\nabla \rho\|_{L^2})\\
& \quad + C \|\nabla \Delta \rho\|_{L^2} (\|\nabla \Delta \sigma\|_{L^2}^{3/4} \|\nabla \sigma\|_{L^2}^{1/4} + \|\nabla \sigma\|_{L^2}) (\|\nabla \Delta \rho\|_{L^2}^{1/4} \|\nabla \rho\|_{L^2}^{3/4} + \|\nabla \rho\|_{L^2}),
\end{split}
\end{align}
which implies
\begin{align}
\label{DtH3est3}
\begin{split}
I_{2, 3}
& \leq C \left(\|\nabla \rho\|_{L^2} + \|\nabla \sigma\|_{L^2}\right) \left(\|\nabla \Delta \rho\|_{L^2}^2 + \|\nabla \Delta \sigma\|_{L^2}^2\right) \\
&\qquad\qquad+ C \left(\|\nabla \rho\|_{L^2}^2 + \|\nabla \sigma\|_{L^2}^2\right) \left(\|\nabla \Delta \rho\|_{L^2} + \|\nabla \Delta \sigma\|_{L^2}\right).
\end{split}
\end{align}
The estimates for $I_{2, 4}$ and $I_{2, 5}$ follow from H\"{o}lder's inequality and the Sobolev embedding $H^1(\T^2) \hookrightarrow L^4(\T^2)$,
\begin{align}
\label{DtH3est4}
\begin{split}
I_{2, 4} & \leq C \|\rho\|_{L^4} \left(\|\nabla \rho\|_{L^4} \|\nabla \Delta \sigma\|_{L^2} + \|\nabla \sigma\|_{L^4} \|\nabla \Delta \rho\|_{L^2}\right)\\
& \leq C \|\rho\|_{L^4} \left(\|\rho\|_{H^2} + \|\sigma - \bar{\sigma}\|_{H^2}\right) \left(\|\nabla \Delta \rho\|_{L^2} + \|\nabla \Delta \sigma\|_{L^2}\right),
\end{split}
\end{align}
and
\begin{align}
\label{DtH3est5}
\begin{split}
I_{2, 5} & \leq C \|\rho\|_{L^\infty} \|\nabla \Delta \rho\|_{L^2} \|\nabla \Delta \sigma\|_{L^2} \leq C \|\rho\|_{H^2} \|\nabla \Delta \rho\|_{L^2} \|\nabla \Delta \sigma\|_{L^2}.
\end{split}
\end{align}
Finally, we use H\"{o}lder's inequality and the interpolation inequality \eqref{L4interp} to obtain
\begin{align}
\label{DtH3est6}
\begin{split}
I_{2, 6} & \leq C \|\nabla \rho\|_{L^4}^2 \|\Delta \sigma\|_{L^2} + C \|\nabla \sigma\|_{L^4} \|\nabla \rho\|_{L^4} \|\Delta \rho\|_{L^2}\\
& \leq C \left(\|\nabla \rho\|_{L^2}^2 + \|\nabla \sigma\|_{L^2}^2\right) \left(\|\Delta \rho\|_{L^2} + \|\Delta \sigma\|_{L^2}\right) + C \left(\|\nabla \rho\|_{L^2} + \|\nabla \sigma\|_{L^2}\right) \left(\|\Delta \rho\|_{L^2}^2 + \|\Delta \sigma\|_{L^2}^2\right).
\end{split}
\end{align}
Gathering the estimates \eqref{DtH3est1}--\eqref{DtH3est6} into \eqref{DtH3}, we arrive at
\begin{align}
\label{DtH3estlast}
\begin{split}
& \frac{1}{2} \frac{\diff}{\diff{t}} \left(\|\nabla \Delta \rho\|_{L^2}^2 + \|\nabla \Delta \sigma\|_{L^2}^2\right) + \frac{D}{5} \left(\|\Delta^2 \rho\|_{L^2}^2 + \|\Delta^2 \sigma\|_{L^2}^2\right) + \frac{D}{\ve} \int \sigma |\nabla \Delta \rho|^2\\
&\quad\leq C \left(\|u\|_{H^2}^2 + \|\nabla \Phi\|_{L^\infty}^2 + \|\rho\|_{H^2} + \|\sigma - \bar{\sigma}\|_{H^2}\right) \left(\|\nabla \Delta \rho\|_{L^2}^2 + \|\nabla \Delta \sigma\|_{L^2}^2\right)\\
&\qquad + C \left(\|\nabla \rho\|_{L^2}^2 + \|\nabla \sigma\|_{L^2}^2 + \|\rho\|_{L^4} \|\rho\|_{H^2} + \|\rho\|_{L^4} \|\sigma - \bar{\sigma}\|_{H^2}\right) \left(\|\nabla \Delta \rho\|_{L^2} + \|\nabla \Delta \sigma\|_{L^2}\right)\\
&\qquad + C \left(\|\Delta u\|_{L^2} \|\nabla u\|_{L^2} + \|\omega\|_{L^2}^2 + \|\nabla \rho\|_{L^2} + \|\nabla \sigma\|_{L^2}\right) \left(\|\Delta \rho\|_{L^2}^2 + \|\Delta \sigma\|_{L^2}^2\right)\\
&\qquad + C \left(\|\nabla \rho\|_{L^2}^2 + \|\nabla \sigma\|_{L^2}^2\right) \left(\|\Delta \rho\|_{L^2} + \|\Delta \sigma\|_{L^2}\right).
\end{split}
\end{align}
We drop the dissipation terms, integrate in time, and use the bounds \eqref{l2time}, \eqref{Phidecay}, \eqref{rholpdecay}, \eqref{omegal2}, \eqref{deltal2bdd}, \eqref{L2H3bdd}, \eqref{omegabdd}, and \eqref{uH3bdd} to obtain
\[
\|\nabla \Delta \rho(t)\|_{L^2}^2 + \|\nabla \Delta \sigma(t)\|_{L^2}^2 \leq C + e^{C e^{C t}}.
\]
Going back to \eqref{DtH3estlast}, we conclude the second inequality in \eqref{Hsbdd1}.

\vspace{1ex}\noindent {\bf Uniqueness.}
Let $(\rho_1, \sigma_1, u_1)$, and $(\rho_2, \sigma_2, u_2)$ be two solutions of the initial value problem \eqref{rhot}--\eqref{initial} satisfying $\rho_1, \rho_2, \sigma_1, \sigma_2 \in C([0, T]; H^s(\T^2))$ and $u_1, u_2 \in C([0, T]; H^s(\T^2))$ for $s > 2$. We denote their differences by $(\tilde{\rho}, \tilde{\sigma}, \tilde{u}) = (\rho_1 - \rho_2, \sigma_1 - \sigma_2, u_1 - u_2)$. Then the equations for $(\tilde{\rho}, \tilde{\sigma}, \tilde{u})$ are
\begin{align*}
& \pt \tilde{\rho} = - u_1 \cdot \nabla \tilde{\rho} - \tilde{u} \cdot \nabla \rho_2 + D \Delta \tilde{\rho} + D (\nabla \sigma_1 \cdot \nabla \tilde{\Phi} + \nabla \tilde{\sigma} \cdot \nabla \Phi_2) + D (\sigma_1 \Delta \tilde{\Phi} + \tilde{\sigma} \Delta \Phi_2),\\
& \pt \tilde{\sigma} = - u_1 \cdot \nabla \tilde{\sigma} - \tilde{u} \cdot \nabla \sigma_2 + D \Delta \tilde{\sigma} + D (\nabla \rho_1 \cdot \nabla \tilde{\Phi} + \nabla \tilde{\rho} \cdot \nabla \Phi_2) + D (\rho_1 \Delta \tilde{\Phi} + \tilde{\rho} \Delta \Phi_2),\\
& - \ve \Delta \tilde{\Phi} = \tilde{\rho},\\
& \pt \tilde{u} + u_1 \cdot \nabla \tilde{u} + \tilde{u} \cdot \nabla u_2 + \nabla (p_1 - p_2) = - (k_B T_K) (\rho_1 \nabla \tilde{\Phi} + \tilde{\rho} \nabla \Phi_2),\\
& \nabla \cdot \tilde{u} = 0,
\end{align*}
with initial data
\[
\tilde{\rho}(\cdot, 0) = \tilde{\sigma}(\cdot, 0) = 0, \qquad \tilde{u}(\cdot, 0) = 0.
\]
The $L^2$-estimates for $(\tilde{\rho}, \tilde{\sigma}, \tilde{u})$ lead to
\begin{align}
\label{enuniq}
\begin{split}
& \frac{1}{2} \frac{\diff}{\diff{t}} \left(\|\tilde{\rho}\|_{L^2}^2 + \|\tilde{\sigma}\|_{L^2}^2 + \|\tilde{u}\|_{L^2}^2\right) + D \left(\|\nabla \tilde{\rho}\|_{L^2}^2 + \|\nabla \tilde{\sigma}\|_{L^2}^2\right) + \frac{D}{\ve} \int \tilde{\rho}^2 \sigma_1\\
&\quad= - \int \tilde{\rho} \tilde{u} \cdot \nabla \rho_2 - \int \tilde{\sigma} \tilde{u} \cdot \nabla \sigma_2 + D \int \tilde{\rho} \nabla \sigma_1 \cdot \nabla \tilde{\Phi} + D \int \tilde{\sigma} \nabla \rho_1 \cdot \nabla \tilde{\Phi} - \frac{D}{\ve} \int \tilde{\rho} \tilde{\sigma} (\rho_1 + \rho_2)\\
& \quad\quad + \int u_2 \cdot (\tilde{u} \cdot \tilde{u}) - (k_B T_K) \int \rho_1 \tilde{u} \cdot \nabla \tilde{\Phi} - (k_B T_K) \int \tilde{\rho} \tilde{u} \cdot \nabla \Phi_2 \\
&\quad\leq C \left(\|\nabla \rho_2\|_{L^\infty} + \|\nabla \sigma_2\|_{L^\infty} + \|\rho_1\|_{L^\infty} + \|\rho_2\|_{L^\infty} + \|\nabla u_2\|_{L^\infty}\right) \left(\|\tilde{\rho}\|_{L^2}^2 + \|\tilde{\sigma}\|_{L^2}^2 + \|\tilde{u}\|_{L^2}^2\right).
\end{split}
\end{align}
Because $\sigma_1 \geq 0$, we can drop the last two terms in the first line of \eqref{enuniq}. Finally, in view of Sobolev embeddings and Gr\"{o}nwall's inequality, we obtain $(\tilde{\rho}, \tilde{\sigma}, \tilde{u}) = (0, 0, 0)$, which proves the uniqueness.
\end{proof}

\section{Vanishing viscosity limits}
\label{sec:vanish}

In this section, we consider the approximation of inviscid flows by high Reynolds number viscous flows.

We denote by $\rho$, $\sigma$, and $u$ the solution to the initial value problem for the Nernst-Planck-Euler system \eqref{rhot}--\eqref{initial}. Let $\rho^\nu$, $\sigma^\nu$, and $u^\nu$ be the solutions to the Nernst-Planck-Navier--Stokes system
\begin{align}
\label{npns}
\begin{split}
& \pt \rho^\nu = - u^\nu \cdot \nabla \rho^\nu + D \left(\Delta \rho^\nu + \nabla \sigma^\nu \cdot \nabla \Phi^\nu + \sigma^\nu \Delta \Phi^\nu\right),\\
& \pt \sigma^\nu = - u^\nu \cdot \nabla \sigma^\nu + D \left(\Delta \sigma^\nu + \nabla \rho^\nu \cdot \nabla \Phi^\nu + \rho^\nu \Delta \Phi^\nu\right),\\
& - \ve \Delta \Phi^\nu = \rho^\nu,\\
& \pt u^\nu + u^\nu \cdot \nabla u^\nu + \nabla p^\nu = \nu \Delta u^\nu - (k_B T_K) \rho^\nu \nabla \Phi^\nu,\\
& \nabla \cdot u^\nu = 0,
\end{split}
\end{align}
where $0 < \nu \ll 1$ is the kinematic viscosity of the fluid, with initial data
\begin{align}
\label{initialnu}
\begin{split}
& \rho^\nu(\cdot, 0) = \rho^\nu(0) = c_1^\nu(0) - c_2^\nu(0),\\
& \sigma^\nu(\cdot, 0) = \sigma^\nu(0) = c_1^\nu(0) + c_2^\nu(0),\\
& u^\nu(\cdot, 0) = u_0^\nu,
\end{split}
\end{align}
which also satisfies the property
\begin{align}
\label{neutnu}
\int \rho^\nu(x, 0) \diff{x} = \int c_1^\nu(x, 0) - c_2^\nu(x, 0) \diff{x} = 0.
\end{align}

To study the inviscid limit, we assume the asymptotically matching initial condition
\begin{align}
\label{initconv}
\lim_{\nu \to 0^+} \left(\|c_1^\nu(0) - c_1(0)\|_{H^3} + \|c_2^\nu(0) - c_2(0)\|_{H^3} + \|u_0^\nu - u_0\|_{H^3}\right) = 0.
\end{align}

Notice that the nonnegativity of the ionic concentrations $c_1^\nu$ and $c_2^\nu$ is also preserved in time and that the energy estimates of the system \eqref{npns}--\eqref{initialnu} only incur an extra $\nu$-dependent dissipation term. Hence, we are able to show, by the same arguments as in Sections~\ref{sec:weak} and \ref{sec:strong}, that the solutions to \eqref{npns}--\eqref{initialnu} exist for all time and are unique. Moreover, for any $t > 0$, they satisfy the bounds (as in Theorem~\ref{thm:weakglobal} and Theorem~\ref{thm:strongglobal})
\begin{align}
\label{nusolest}
\begin{split}
& \|\rho^\nu(t)\|_{L^p} + \|\sigma^\nu(t) - \bar{\sigma}^\nu\|_{L^p} + \|\nabla \Phi^\nu(t)\|_{L^\infty} \leq C e^{- C' t} \quad \text{for all}\ 1 \leq p < \infty,\\
& \|\nabla \rho^\nu(t)\|_{L^2}^2 + \|\nabla \sigma^\nu(t)\|_{L^2}^2 + \int_0^t \|\Delta \rho^\nu(\tau)\|_{L^2}^2 + \|\Delta \sigma^\nu(\tau)\|_{L^2}^2 \diff{\tau} \leq C,\\
& \|\Delta \rho^\nu(t)\|_{L^2} + \|\Delta \sigma^\nu(t)\|_{L^2} + \int_0^t \|\nabla \Delta \rho^\nu(\tau)\|_{L^2}^2 + \|\nabla \Delta \sigma^\nu(\tau)\|_{L^2}^2 \diff{\tau} \leq C,\\
& \|\nabla \Delta \rho^\nu(t)\|_{L^2} + \|\nabla \Delta \sigma^\nu(t)\|_{L^2} + \int_0^t \|\Delta^2 \rho^\nu(\tau)\|_{L^2}^2 + \|\Delta^2 \sigma^\nu(\tau)\|_{L^2}^2 \diff{\tau} \leq C e^{C e^t},\\
& \|u^\nu(t)\|_{L^2} \leq C,\\
& \|u^\nu(t)\|_{H^3} \leq e^{C e^{C t}},
\end{split}
\end{align}
for some constants $C, C' > 0$ depending only on $D$, $\ve$, and the initial data. We note that when $\nu$ is sufficiently close to $0$, the $H^3$-norms of the initial data are uniformly bounded, so we are allowed to assume the constants $C$ and $C'$ to be independent of $\nu$.

The main result of this section is given in Theorem \ref{thm:vanish} below, which can be improved (by a similar argument) to hold for any strong solutions in $H^s(\T^2)$ with $s > 2$. Namely, the $L^\infty(0, T; H^{s- 2}(\T^2))$-norms of the solutions satisfy \eqref{H1conv}, the $L^\infty(0, T; H^{s- 1}(\T^2))$-norms of the solutions satisfy \eqref{H2conv}, and the $L^\infty(0, T; H^s(\T^2))$-norms of the solutions satisfy \eqref{H3conv}.

\begin{theorem}
\label{thm:vanish}
Let $\ve > 0$ and $D > 0$. Let $c_1(0), c_2(0), c_1^\nu(0), c_2^\nu(0) \in H^3(\T^2)$ be nonnegative and satisfy \eqref{neut} and \eqref{neutnu}. Let $u_0, u_0^\nu \in H^3(\T^2)$ be divergence free. Suppose that $(c_1(0), c_2(0), u_0)$ and $(c_1^\nu(0), c_2^\nu(0), u_0^\nu)$ satisfy the asymptotically matching condition \eqref{initconv}. Let $(\rho^\nu, \sigma^\nu, u^\nu)$ denote the unique solution to the initial value problem for the Nernst-Planck-Navier-Stokes system \eqref{npns}--\eqref{initialnu}, and $(\rho, \sigma, u)$ denote the unique solution to the initial value problem for the Nernst-Planck-Euler system \eqref{rhot}--\eqref{initial}. Then for any $T > 0$ and $t \in [0, T]$, we have the pointwise convergence
\begin{align}
\begin{split}
& \|\rho^\nu(t) - \rho(t)\|_{H^1(\T^2)} + \|\sigma^\nu(t) - \sigma(t)\|_{H^1(\T^2)} + \|u^\nu(t) - u(t)\|_{H^1(\T^2)}\\
&\quad\leq C \left(\nu t + \|c_1^\nu(0) - c_1(0)\|_{H^1(\T^2)} + \|c_2^\nu(0) - c_2(0)\|_{H^1(\T^2)} + \|u_0^\nu - u_0\|_{H^1(\T^2)}\right),
\end{split}\label{H1conv}
\end{align}
and
\begin{align}
\begin{split}
& \|\rho^\nu(t) - \rho(t)\|_{H^2(\T^2)} + \|\sigma^\nu(t) - \sigma(t)\|_{H^2(\T^2)} + \|u^\nu(t) - u(t)\|_{H^2(\T^2)}\\
&\quad \leq C \left((\nu t)^{\frac{1}{2}} + \|c_1^\nu(0) - c_1(0)\|_{H^2(\T^2)} + \|c_2^\nu(0) - c_2(0)\|_{H^2(\T^2)} + \|u_0^\nu - u_0\|_{H^2(\T^2)}\right),
\end{split}\label{H2conv}
\end{align}
where $C$ depends only on $\ve$, $D$, $\rho$, $\sigma$, $u$, and $T$. At the $H^3$-level, we have
\begin{align}
\label{H3conv}
\lim_{\nu \to 0^+} \left(\|\rho^\nu - \rho\|_{L^\infty(0, T; H^3(\T^2))} + \|\sigma^\nu - \sigma\|_{L^\infty(0, T; H^3(\T^2))} + \|u^\nu - u\|_{L^\infty(0, T; H^3(\T^2))}\right) = 0.
\end{align}
\end{theorem}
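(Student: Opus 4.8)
The plan is to run energy estimates on the \emph{difference} of the two solutions, raising the order of the estimate one derivative at a time and isolating the viscous term $\nu\Delta u^\nu$ as the only contribution that does not close by itself. Writing $(\tilde\rho,\tilde\sigma,\tilde u,\tilde\Phi)=(\rho^\nu-\rho,\sigma^\nu-\sigma,u^\nu-u,\Phi^\nu-\Phi)$ and subtracting \eqref{rhot}--\eqref{incomp} from \eqref{npns}, the differences satisfy a system of exactly the structure appearing in the uniqueness proof of Theorem~\ref{thm:strongglobal}: the coupling terms are linear in $(\tilde\rho,\tilde\sigma,\tilde u)$ with coefficients built from the solutions (uniformly bounded by \eqref{nusolest}), one has $-\ve\Delta\tilde\Phi=\tilde\rho$, and the velocity equation carries the extra forcing $\nu\Delta u^\nu$. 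Since $\sigma^\nu,\sigma\ge 0$, the coercive terms $\tfrac{D}{\ve}\int\sigma^\nu|\nabla^k\tilde\rho|^2$ produced at every level have a favourable sign and may be discarded, exactly as in \eqref{enuniq}.

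For \eqref{H1conv} I would test the difference system at the $H^1$ level, using \eqref{nusolest} to bound all coefficients uniformly in $\nu$. The only delicate term is $\nu\Delta u^\nu$: splitting $\nu\Delta u^\nu=\nu\Delta\tilde u+\nu\Delta u$, the first piece integrates by parts into the \emph{dissipative} $-\nu\|\nabla\tilde u\|_{L^2}^2$ (and $-\nu\|\Delta\tilde u\|_{L^2}^2$), while the second is a genuine forcing of size $O(\nu)$ handled \emph{linearly}, e.g. $\nu\int\nabla\Delta u\cdot\nabla\tilde u\le C\nu\|\nabla\tilde u\|_{L^2}$, because $\nabla\Delta u\in L^2$ by the uniform $H^3$ bound on $u$. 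Gr\"onwall's inequality with a $\nu$-independent exponent then yields the rate $\nu t$. The $H^2$ estimate \eqref{H2conv} proceeds identically except that, after integration by parts, the forcing appears as $\nu\int\nabla\Delta u\cdot\nabla\Delta\tilde u$. Here the companion factor $\nabla\Delta\tilde u$ is an $H^3$-order quantity of the difference, hence lies outside the $H^2$ energy and cannot fill a linear Gr\"onwall slot; it can only be absorbed through the viscous dissipation $-\nu\|\nabla\Delta\tilde u\|_{L^2}^2$ that the $H^2$ estimate itself produces. Young's inequality then leaves the uncontrolled remainder $\tfrac{\nu}{2}\|\nabla\Delta u\|_{L^2}^2\le C\nu$, and integrating in time produces $C\nu t$ inside the squared $H^2$ norm, i.e. the weaker rate $(\nu t)^{1/2}$.

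This derivative loss becomes fatal at the $H^3$ level, where even after using the full dissipation $-\nu\|\Delta^2\tilde u\|_{L^2}^2$ the viscous error $\nu\int\Delta^2 u\,\Delta^2\tilde u$ would require $u\in H^4$, which the inviscid limit cannot supply. This is where I would switch to the regularization method. Fix a mollifier $J_\delta$ and let $(\rho^\delta,\sigma^\delta,u^\delta)$ be the NPE solution issued from the mollified data $(J_\delta\rho(0),J_\delta\sigma(0),J_\delta u_0)$; by Theorem~\ref{thm:strongglobal} in its $H^4$ version this solution is bounded in $H^4$, with $\|u^\delta\|_{L^\infty(0,T;H^3)}$ controlled uniformly in $\delta$ and $\|u^\delta\|_{L^\infty(0,T;H^4)}\le C\delta^{-1}$. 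I then split
\begin{equation*}
\|u^\nu-u\|_{H^3}\le\|u^\nu-u^\delta\|_{H^3}+\|u^\delta-u\|_{H^3},
\end{equation*}
and analogously for $\rho,\sigma$. The second term tends to $0$ as $\delta\to 0$ by continuous dependence of NPE solutions on $H^3$ data, a Bona--Smith type argument forced by the fact that the Euler component is not Lipschitz in the top norm. For the first term the $H^3$ comparison of the viscous solution with the \emph{smooth} inviscid solution $u^\delta$ now closes, since the viscous error reduces to $\tfrac{\nu}{2}\|\Delta^2u^\delta\|_{L^2}^2\le C\nu\delta^{-2}$; a Gr\"onwall argument with exponent independent of $\nu$ and $\delta$ (thanks to the uniform $H^3$ bounds on $u^\nu$ and $u^\delta$) gives
\begin{equation*}
\|u^\nu-u^\delta\|_{L^\infty(0,T;H^3)}^2\le C\Big(\|u_0^\nu-J_\delta u_0\|_{H^3}^2+\nu\delta^{-2}\Big)e^{C(T)},
\end{equation*}
where the data gap is bounded by $\|u_0^\nu-u_0\|_{H^3}+\|u_0-J_\delta u_0\|_{H^3}\to 0$ using \eqref{initconv}. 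Choosing $\delta=\delta(\nu)$ with $\delta\to 0$ and $\nu\delta^{-2}\to 0$ (for instance $\delta=\nu^{1/4}$) drives both terms to zero and proves \eqref{H3conv}.

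The main obstacle is exactly this $H^3$ derivative loss: the viscous forcing $\nu\Delta u^\nu$ costs one derivative that the inviscid solution cannot provide, and the regularization trades that lost derivative for a factor $\delta^{-1}$ while keeping every Gr\"onwall constant uniform in $\nu$ and $\delta$ (guaranteed by the uniform bounds \eqref{nusolest} together with the $\delta$-uniform $H^3$ bound on $u^\delta$). The remaining delicacy is the quantitative selection of $\delta(\nu)$ that simultaneously beats the $\nu\delta^{-2}$ viscous error and sends the regularization gap $\|u^\delta-u\|_{H^3}$ to zero.
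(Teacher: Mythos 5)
Your proposal is correct and follows essentially the same route as the paper: energy estimates on the difference at the $H^1$ and $H^2$ levels (with the viscous forcing $\nu\Delta u$ entering linearly, respectively quadratically, into the energy and producing the rates $\nu t$ and $(\nu t)^{1/2}$), followed by a Bona--Smith regularization at the $H^3$ level with $\|u^\delta\|_{H^4}\lesssim \delta^{-1}$, viscous error $\nu\delta^{-2}$, and a viscosity-dependent choice of $\delta$ (the paper takes $\kappa=\nu^{1/3}$, mollifies the concentrations, and spectrally truncates $u_0$). The one step you leave implicit --- that $\|u^\delta-u\|_{L^\infty(0,T;H^3)}\to 0$ --- is carried out in the paper by the same mechanism you invoke for the viscous comparison, namely bounding $\|u^\kappa\|_{H^4}\,\|u^\kappa-u\|_{L^\infty}\lesssim \kappa^{-1}\kappa^{3-s}=\kappa^{2-s}$ via an intermediate $H^s$ estimate with $1<s<2$.
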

\begin{proof}
Let $(\vr, \vs, v) = (\rho^\nu - \rho, \sigma^\nu - \sigma, u^\nu - u)$ denote the difference between the solutions to \eqref{npns}--\eqref{initialnu} and the solutions to \eqref{rhot}--\eqref{initial}. Then the system obeyed by $(\vr, \vs, v)$ is 
\begin{align}
& \pt \vr = - v \cdot \nabla \vr - u \cdot \nabla \vr - v \cdot \nabla \rho + D (\Delta \vr + \nabla \vs \cdot \nabla \Psi + \nabla \sigma \cdot \nabla \Psi + \nabla \vs \cdot \nabla \Phi + \vs \Delta \Psi + \sigma \Delta \Psi + \vs \Delta \Phi), \label{vrt}\\
& \pt \vs = - v \cdot \nabla \vs - u \cdot \nabla \vs - v \cdot \nabla \sigma + D (\Delta \vs + \nabla \vr \cdot \nabla \Psi + \nabla \rho \cdot \nabla \Psi + \nabla \vr \cdot \nabla \Phi + \vr \Delta \Psi + \rho \Delta \Psi + \vr \Delta \Phi), \label{vst}\\
& - \ve \Delta \Psi = \varrho, \label{Psieqn}\\
& \pt v + v \cdot \nabla v + u \cdot \nabla v + v \cdot \nabla u + \nabla (p^\nu - p) = \nu \Delta v + \nu \Delta u - (k_B T_K) (\vr \nabla \Psi + \rho \nabla \Psi + \vr \nabla \Phi), \label{vt}\\
& \nabla \cdot v = 0. \label{vincomp}
\end{align}

\vspace{1ex}\noindent {\bf Inviscid limit for $H^1$-norms.}
We multiply \eqref{vrt} and \eqref{vst} by $\vr$ and $\vs$ respectively, integrate over $\T^2$, integrate by parts, and use \eqref{poisson}, \eqref{incomp}, \eqref{Psieqn}, and \eqref{vincomp} to obtain
\begin{align*}
& \frac{1}{2} \frac{\diff}{\diff{t}} \left(\|\vr\|_{L^2}^2 + \|\vs\|_{L^2}^2\right) + D \left(\|\nabla \vr\|_{L^2}^2 + \|\nabla \vs\|_{L^2}^2\right) + \frac{D}{\ve} \int (\sigma + \vs) \vr^2\\
&\quad= \int \rho v \cdot \nabla \vr + \int (\sigma - \bar{\sigma}) v \cdot \nabla \vs + D \int \vr \nabla \sigma \cdot \nabla \Psi - D \int \vs \nabla \rho \cdot \nabla \Psi - \frac{2 D}{\ve} \int \vs \rho \vr\\
&\quad\leq C \left(\|\rho\|_{H^2}^2 + \|\sigma - \bar{\sigma}\|_{H^2}^2\right) \|v\|_{L^2}^2 + \frac{D}{2} \left(\|\nabla \vr\|_{L^2}^2 + \|\nabla \vr\|_{L^2}^2\right) + C \|\nabla \Psi\|_{L^\infty} \|\nabla \sigma\|_{L^2} \|\vr\|_{L^2}\\
& \qquad\qquad + C \|\nabla \Psi\|_{L^\infty} \|\nabla \rho\|_{L^2} \|\vs\|_{L^2} + C \|\rho\|_{L^\infty} \|\vr\|_{L^2} \|\vs\|_{L^2},
\end{align*}
where we used H\"{o}lder's inequality and Young's inequality. Absorbing the term $\frac{D}{2} \left(\|\nabla \vr\|_{L^2}^2 + \|\nabla \vs\|_{L^2}^2\right)$ into the dissipation term in the first line and using the bounds \eqref{solest1}--\eqref{solest2} as well as a consequence of the elliptic estimate and Sobolev embeddings
\begin{align}
\label{Psiemb}
\|\nabla \Psi\|_{L^\infty} \leq C \|\vr\|_{H^1},
\end{align}
we get
\begin{align}
\label{vrvsL2}
\begin{split}
& \frac{\diff}{\diff{t}} \left(\|\vr\|_{L^2}^2 + \|\vs\|_{L^2}^2\right) + D \left(\|\nabla \vr\|_{L^2}^2 + \|\nabla \vs\|_{L^2}^2\right) + \frac{2 D}{\ve} \int (\sigma + \vs) \vr^2\\
&\quad\leq C \|v\|_{L^2}^2 + C \|\nabla \Psi\|_{L^\infty} \|\vr\|_{L^2} + C \|\nabla \Psi\|_{L^\infty} \|\vs\|_{L^2} + C \|\vr\|_{L^2} \|\vs\|_{L^2}\\
&\quad\leq C \|v\|_{L^2}^2 + C \left(\|\vr\|_{L^2} + \|\vs\|_{L^2}\right) \|\vr\|_{H^1}.
\end{split}
\end{align}

We take the scalar product of \eqref{vt} with $v$ and integrate by parts,
\[
\frac{1}{2} \frac{\diff}{\diff{t}} \|v\|_{L^2}^2 + \nu \|\nabla v\|_{L^2}^2\\
= \nu \int v \cdot \Delta u - \int v \cdot (\nabla u v) - (k_B T_K) \int v \cdot (\vr \nabla \Psi + \rho \nabla \Psi + \vr \nabla \Phi).
\]
Using H\"{o}lder's inequality and the bounds \eqref{solest1}--\eqref{solest2}, \eqref{Hsbdd1}--\eqref{Hsbdd2} together with \eqref{Psiemb}, we have
\begin{align}
\label{vL2}
\begin{split}
 \frac{1}{2} \frac{\diff}{\diff{t}} \|v\|_{L^2}^2 + \nu \|\nabla v\|_{L^2}^2
&\leq \nu \|\Delta u\|_{L^2} \|v\|_{L^2} + \|\nabla u\|_{L^\infty} \|v\|_{L^2}^2\\
&\qquad+ C \left(\|\nabla \Psi\|_{L^\infty} \|\vr\|_{L^2} + \|\nabla \Psi\|_{L^\infty} \|\rho\|_{L^2} + \|\nabla \Phi\|_{L^\infty} \|\vr\|_{L^2}\right) \|v\|_{L^2}\\
&\leq C \nu \|v\|_{L^2} + C \left(\|v\|_{L^2} + \|\vr\|_{H^1}\right) \|v\|_{L^2}.
\end{split}
\end{align}
At $H^1$-level, from \eqref{vrt}--\eqref{Psieqn} and \eqref{poisson}, we integrate by parts and use H\"{o}lder's inequality to obtain
\begin{align*}
& \frac{1}{2} \frac{\diff}{\diff{t}} \left(\|\nabla \vr\|_{L^2}^2 + \|\nabla \vs\|_{L^2}^2\right) + D \left(\|\Delta \vr\|_{L^2}^2 + \|\Delta \vs\|_{L^2}^2\right) + \frac{D}{\ve} \int (\vs + \sigma) |\nabla \vr|^2\\
&\quad\leq \|\nabla v\|_{L^2} \left(\|\nabla \vr\|_{L^4}^2 + \|\nabla \vs\|_{L^4}^2\right) + \|\nabla u\|_{L^\infty} \left(\|\nabla \vr\|_{L^2}^2 + \|\nabla \vs\|_{L^2}^2\right)\\
& \qquad+ \left(\|\nabla \rho\|_{L^4} \|\nabla v\|_{L^2} + \|\nabla \nabla \rho\|_{L^2} \|v\|_{L^4}\right) \|\nabla \vr\|_{L^4} + \left(\|\nabla \sigma\|_{L^4} \|\nabla v\|_{L^2} + \|\nabla \nabla \sigma\|_{L^2} \|v\|_{L^4}\right) \|\nabla \vs\|_{L^4}\\
&\qquad + D \|\nabla \Psi\|_{L^\infty} \|\nabla \vr\|_{L^2} \|\nabla \nabla \vs\|_{L^2} + D \|\nabla \Psi\|_{L^\infty} \|\nabla \vs\|_{L^2} \|\nabla \nabla \vr\|_{L^2} + 2 D \|\nabla \nabla \Psi\|_{L^2} \|\nabla \vr\|_{L^4} \|\nabla \vs\|_{L^4}\\
&\qquad + D \|\nabla \nabla \sigma\|_{L^2} \|\nabla \Psi\|_{L^\infty} \|\nabla \vr\|_{L^2} + D \|\nabla \nabla \rho\|_{L^2} \|\nabla \Psi\|_{L^\infty} \|\nabla \vs\|_{L^2} + D \|\nabla \sigma\|_{L^4} \|\nabla \nabla \Psi\|_{L^2} \|\nabla \vr\|_{L^4}\\
&\qquad + D \|\nabla \rho\|_{L^4} \|\nabla \nabla \Psi\|_{L^2} \|\nabla \vs\|_{L^4} + D \|\nabla \Phi\|_{L^\infty} \|\nabla \vr\|_{L^2} \|\nabla \nabla \vs\|_{L^2} + D \|\nabla \Phi\|_{L^\infty} \|\nabla \vs\|_{L^2} \|\nabla \nabla \vr\|_{L^2}\\
&\qquad + 2 D \|\nabla \nabla \Phi\|_{L^2} \|\nabla \vr\|_{L^4} \|\nabla \vs\|_{L^4} + \frac{3 D}{\ve} \|\vr\|_{L^2} \|\nabla \vs\|_{L^4} \|\nabla \vr\|_{L^4} + \frac{D}{\ve} \|\nabla \sigma\|_{L^\infty} \|\vr\|_{L^2} \|\nabla \vr\|_{L^2}\\
&\qquad + \frac{2 D}{\ve} \|\nabla \rho\|_{L^\infty} \|\vr\|_{L^2} \|\nabla \vs\|_{L^2} + \frac{D}{\ve} \|\nabla \rho\|_{L^\infty} \|\vs\|_{L^2} \|\nabla \vr\|_{L^2} + \frac{3 D}{\ve} \|\rho\|_{L^\infty} \|\nabla \vs\|_{L^2} \|\nabla \vr\|_{L^2}.
\end{align*}
Then by elliptic estimates, the Sobolev embedding $H^1(\T^2) \hookrightarrow L^4(\T^2)$, the Gagliardo-Nirenberg interpolation inequality \eqref{L4interp}, Young's inequality, and the bounds \eqref{solest1}--\eqref{solest2}, \eqref{Hsbdd1}--\eqref{Hsbdd2} as well as \eqref{Psiemb} we have
\begin{align*}
& \frac{1}{2} \frac{\diff}{\diff{t}} \left(\|\nabla \vr\|_{L^2}^2 + \|\nabla \vs\|_{L^2}^2\right) + D \left(\|\Delta \vr\|_{L^2}^2 + \|\Delta \vs\|_{L^2}^2\right) + \frac{D}{\ve} \int (\vs + \sigma) |\nabla \vr|^2\\
&\quad\leq \frac{D}{2} \left(\|\Delta \vr\|_{L^2}^2 + \|\Delta \vs\|_{L^2}^2\right) + C \|\nabla v\|_{L^2}^2 \left(\|\nabla \vr\|_{H^1} + \|\nabla \vs\|_{H^1}\right) \left(\|\nabla \vr\|_{L^2} + \|\nabla \vs\|_{L^2}\right)\\
&\qquad + C \|v\|_{L^2}^{\frac{2}{3}} \|\nabla v\|_{L^2}^{\frac{2}{3}} \left(\|\nabla \vr\|_{L^2}^{\frac{2}{3}} + \|\nabla \vs\|_{L^2}^{\frac{2}{3}}\right) + C \left(\|\vr\|_{H^1}^2 + \|\vs\|_{H^1}^2\right) + C \|\vr\|_{H^1}^4 + C \|\vr\|_{H^1}^2 \|\vs\|_{H^1}^2\\
&\qquad + C \|\vr\|_{H^1}^3 \|\vs\|_{H^1} + C \|\vr\|_{H^1}^3 + C \|\vr\|_{H^1}^2 \|\vs\|_{H^1} + C \|\vr\|_{H^1} \|\vs\|_{H^1},
\end{align*}
so that
\begin{align}
\label{DvrvsL2}
\begin{split}
& \frac{\diff}{\diff{t}} \left(\|\nabla \vr\|_{L^2}^2 + \|\nabla \vs\|_{L^2}^2\right) + D \left(\|\Delta \vr\|_{L^2}^2 + \|\Delta \vs\|_{L^2}^2\right) + \frac{2 D}{\ve} \int (\vs + \sigma) |\nabla \vr|^2\\
&\quad\leq C \|v\|_{H^1}^2 \left(\|\nabla \vr\|_{L^2}^2 + \|\nabla \vs\|_{L^2}^2 + \|\nabla \vr\|_{L^2} + \|\nabla \vs\|_{L^2}\right) + C \left(\|\vr\|_{H^1}^4 + \|\vs\|_{H^1}^4 + \|\vr\|_{H^1}^2 + \|\vs\|_{H^1}^2\right).
\end{split}
\end{align}
For the $H^1$-norm of $v$, we use the standard estimates for the cubic velocity terms (see e.g. \cite{MB02}), and apply the H\"{o}lder inequality with $L^2$-$L^2$-$L^\infty$, the bounds \eqref{Psiemb}, and the Sobolev calculus inequality \eqref{fgHm} to the electric volume force term to obtain
\begin{align}
\label{vH1}
\begin{split}
& \frac{1}{2} \frac{\diff}{\diff{t}} \|\nabla v\|_{L^2}^2 + \nu \|\Delta v\|_{L^2}^2\\
&\quad\leq C \left(\|u\|_{H^3} + \|v\|_{H^3}\right) \|v\|_{H^1}^2 + \nu \|\Delta u\|_{H^1} \|v\|_{H^1} + C \left(\|\rho\|_{H^1} \|\nabla \Psi\|_{L^\infty} + \|\rho\|_{L^\infty} \|\nabla \Psi\|_{H^1}\right) \|v\|_{H^1}\\
& \qquad + C \left(\|\nabla \Psi\|_{L^\infty} + \|\nabla \Phi\|_{L^\infty} + \|\vr\|_{H^1} + \|\rho\|_{H^1}\right) \|\vr\|_{H^1} \|v\|_{H^1}\\
&\quad \leq C \left(\nu + \|v\|_{H^1} + \|v\|_{H^3} \|v\|_{H^1} + \|\vr\|_{H^1} + \|\vr\|_{H^1}^2\right) \|v\|_{H^1}.
\end{split}
\end{align}
Summing the equations \eqref{vrvsL2}, \eqref{vL2} and \eqref{DvrvsL2}--\eqref{vH1}, dropping the dissipation terms on the left hand side (where we use the fact that $\sigma + \vs = \sigma^\nu \geq 0$), and dividing by $\|\vr\|_{H^1} + \|\vs\|_{H^1} + \|v\|_{H^1}$, we get
\begin{align}
\label{allH1}
\begin{split}
& \frac{\diff}{\diff{t}} \left(\|\vr\|_{H^1} + \|\vs\|_{H^1} + \|v\|_{H^1}\right)\\
&\quad\leq C \Big(\nu + \|v\|_{H^1} + \|v\|_{H^3} \|v\|_{H^1} + \|\vr\|_{H^1}^2 \|v\|_{H^1} + \|\vs\|_{H^1}^2 \|v\|_{H^1} + \|\vr\|_{H^1} \|v\|_{H^1}\\
& \qquad\qquad + \|\vs\|_{H^1} \|v\|_{H^1} + \|\vr\|_{H^1} + \|\vs\|_{H^1} + \|\vr\|_{H^1}^3 + \|\vs\|_{H^1}^3\Big).
\end{split}
\end{align}
Denote
\[
V_1 = \|\vr\|_{H^1} + \|\vs\|_{H^1} + \|v\|_{H^1}.
\]
Using the bounds \eqref{solest1}, \eqref{solest2}, \eqref{Hsbdd1}, \eqref{Hsbdd2}, and \eqref{nusolest} in \eqref{allH1}, we obtain
\[
\frac{\diff}{\diff{t}} V_1 \leq C \left(\nu + V_1 + V_1^2\right).
\]
As a consequence, following \cite{Con86} (in particular, multiplying by an integration factor and then using Lemma~1.3 in \cite{Con86}; see also \cite{CF88}, Chapter~11), we obtain \eqref{H1conv}.

\vspace{1ex}\noindent {\bf Inviscid limit for $H^2$-norms.}
We multiply \eqref{vrt} and \eqref{vst} by $\Delta^2 \vr$ and $\Delta^2 \vs$, respectively, and integrate over $\T^2$. We obtain
\begin{align}
\begin{split}
& \frac{\diff}{\diff{t}} \left(\|\Delta \vr\|_{L^2}^2 + \|\Delta \vs\|_{L^2}^2\right) + D \left(\|\nabla \Delta \vr\|_{L^2}^2 + \|\nabla \Delta \vs\|_{L^2}^2\right)\\
&\quad\leq C \left(\|v\|_{H^1}^2 + \|\vr\|_{H^2}^2 + \|\vs\|_{H^2}^2\right) \left(\|\vr\|_{H^2}^2 + \|\vs\|_{H^2}^2\right) + C \|v\|_{H^1}^4 \left(\|\vr\|_{H^2}^4 + \|\vs\|_{H^2}^4\right) + C \left(\|\Delta \vr\|_{L^2}^2 + \|\Delta \vs\|_{L^2}^2\right)\\
&\qquad + C \left(\|u\|_{H^2}^2 + \|\rho\|_{H^2}\right) \left(\|\Delta \vr\|_{L^2}^2 + \|\Delta \vs\|_{L^2}^2\right) + C \left(\|\rho\|_{H^3} + \|\sigma - \bar{\sigma}\|_{H^3}\right) \left(\|\Delta \vr\|_{L^2} + \|\Delta \vs\|_{L^2}\right) \|v\|_{H^2}\end{split} \notag\\
&\qquad + C \left(\|\rho\|_{H^3} + \|\sigma - \bar{\sigma}\|_{H^3}\right) \|\vr\|_{H^2} \left(\|\Delta \vr\|_{L^2} + \|\Delta \vs\|_{L^2}\right) + C \|\rho\|_{H^1}^2 \left(\|\Delta \vr\|_{L^2}^2 + \|\Delta \vs\|_{L^2}^2\right), 
\label{vrvsH2}
\end{align}
using similar estimates to \eqref{DtH2}--\eqref{DtH2est02}. We omit further details.
Similarly, for the energy estimates of $\|\Delta v\|_{L^2}$, we have
\begin{align}
\label{vH2}
\begin{split}
\frac{1}{2} \frac{\diff}{\diff{t}} \|\Delta v\|_{L^2}^2 + \nu \|\nabla \Delta v\|_{L^2}^2 & \leq \nu \|u\|_{H^3} \|v\|_{H^3} + C \|v\|_{H^3} \|v\|_{H^2}^2 + C \|\rho\|_{H^2}^2 \|v\|_{H^2} + C \|u\|_{H^3} \|v\|_{H^3} \|v\|_{H^2}\\
& \qquad + C \|\rho\|_{H^2} \|\vr\|_{H^2} \|v\|_{H^2}.
\end{split}
\end{align}
Summing the equations \eqref{vrvsH2} and \eqref{vH2}, dropping the dissipation terms on the left hand side, dividing by $\|\vr\|_{H^2} + \|\vs\|_{H^2} + \|v\|_{H^2}$, and using the definition of $\vr$, $\vs$, and $v$, together with the bounds \eqref{solest1}--\eqref{solest2}, \eqref{Hsbdd1}--\eqref{Hsbdd2}, and \eqref{nusolest}, we obtain
\[
\frac{\diff}{\diff{t}} V_2 \leq C \left(\nu + V_2 + V_2^2\right),
\]
where we denote
\[
V_2 = \|\vr\|_{H^2}^2 + \|\vs\|_{H^2}^2 + \|v\|_{H^2}^2.
\]
By using an integration factor and invoking Lemma~1.3 in \cite{Con86}, we conclude \eqref{H2conv}.

\vspace{1ex}\noindent {\bf Inviscid limit for $H^3$-norms.}
Now we consider the inviscid limit of the higher norms of the solutions. The idea is to regularize the initial data and then in the estimates to put higher derivatives on the regularized solution \cite{BS75, CW96, Mas07}.

First we regularize the initial data: for any $\kappa > 0$, let
\[
c_i^\kappa(0) = \kappa^{-2} \phi\bigg(\frac{\cdot}{\kappa}\bigg) * c_i(0), \quad i = 1, 2,
\]
where $\phi$ is a radial function satisfying
\[
\phi(|x|) \in C_0^\infty(\T^2), \quad \phi \geq 0, \quad \int \phi = 1.
\]
Notice that the nonnegativity of the function $\phi$ preserves the nonnegativity the ionic concentrations, i.e., $c_1^\kappa(0), c_2^\kappa(0) \geq 0$. 

We further denote
\[
\rho^\kappa(\cdot, 0) = c_1^\kappa(0) -  c_2^\kappa(0), \quad \sigma^\kappa(\cdot, 0) = c_1^\kappa(0) + c_2^\kappa(0), \quad u_0^\kappa = \P_{\lfloor 1 / \kappa \rfloor} u_0,
\]
where $\lfloor \cdot \rfloor$ is the greatest integer function (floor function) and $\P_m$ is the orthogonal projection onto the $m$-dimensional spaces spanned by the eigenfunctions of the Stokes operator on $\T^2$. We note that these definitions result in 
\begin{align}
\label{initk}
\begin{split}
& \|\rho^\kappa(0) - \rho(0)\|_{H^s} \leq C \kappa^{3 - s}, \quad \|\rho^\kappa(0)\|_{H^3} \leq \|\rho(0)\|_{H^3}, \quad \|\rho^\kappa(0)\|_{H^4} \leq \frac{C}{\kappa} ,\\
& \|\sigma^\kappa(0) - \sigma(0)\|_{H^s} \leq C \kappa^{3 - s}, \quad \|\sigma^\kappa(0)\|_{H^3} \leq \|\sigma(0)\|_{H^3}, \quad \|\sigma^\kappa(0)\|_{H^4} \leq \frac{C}{\kappa},\\
& \|u_0^\kappa- u_0\|_{H^s} \leq C \kappa^{3 - s}, \quad \|u_0^\kappa\|_{H^3} \leq \|u_0\|_{H^3}, \quad \|u_0^\kappa\|_{H^4} \leq \frac{C}{\kappa},\\
\end{split}
\end{align}
for any $1 < s < 2$ and some $C > 0$ depending only on $\|c_1(0)\|_{H^3}$, $\|c_2(0)\|_{H^3}$, and $\|u_0\|_{H^3}$.

We denote by $(\rho^\kappa, \sigma^\kappa, u^\kappa)$ the solutions of the initial value problem for the Nernst-Planck-Euler system \eqref{rhot}--\eqref{incomp} with the regularized data $(\rho^\kappa(0), \sigma^\kappa(0), u_0^\kappa)$. We denote $(\vr^\kappa, \vs^\kappa, v^\kappa) = (\rho^\kappa - \rho, \sigma^\kappa - \sigma, u^\kappa - u)$.

Because the solutions $(\rho^\kappa, \sigma^\kappa, u^\kappa)$ satisfy the same equations as $(\rho, \sigma, u)$ but the initial data instead satisfies \eqref{initk}, we see that $(\rho^\kappa, \sigma^\kappa, u^\kappa)$ and, therefore by the triangle inequality, $(\vr^\kappa, \vs^\kappa, v^\kappa)$ satisfy the same bounds as in the conclusions of Theorem~\ref{thm:weakglobal} and Theorem~\ref{thm:strongglobal}, uniformly in $\kappa$. Namely,
\begin{align}
\label{ksolest}
\begin{split}
& \|f(t)\|_{L^p} + \left\|g(t) - \bar{g}\right\|_{L^p} + \|\nabla H(t)\|_{L^\infty} \leq C e^{- C' t} \quad \text{for all}\ 1 \leq p < \infty,\\
& \|\nabla f(t)\|_{L^2} + \|\nabla g(t)\|_{L^2} + \int_0^t \|\Delta f(\tau)\|_{L^2}^2 + \|\Delta g(\tau)\|_{L^2}^2 \diff{\tau} \leq C,\\
& \|\Delta f(t)\|_{L^2} + \|\Delta g(t)\|_{L^2} + \int_0^t \|\nabla \Delta f(\tau)\|_{L^2}^2 + \|\nabla \Delta g(\tau)\|_{L^2}^2 \diff{\tau} \leq C,\\
& \|\nabla \Delta f(t)\|_{L^2} + \|\nabla \Delta g(t)\|_{L^2} + \int_0^t \|\Delta^2 f(\tau)\|_{L^2}^2 + \|\Delta^2 g(\tau)\|_{L^2}^2 \diff{\tau} \leq C e^{C e^t},\\
& \|u^\kappa(t)\|_{L^2} + \|v^\kappa(t)\|_{L^2} \leq C,\\
& \|u^\kappa(t)\|_{H^3} + \|v^\kappa(t)\|_{H^3} \leq e^{C e^{C t}},
\end{split}
\end{align}
for some $C, C' > 0$ and $(f,g) \in \{(\rho^\kappa, \sigma^\kappa), (\vr^\kappa, \vs^\kappa)\}$. 

First we establish a bound on $\|u^\kappa\|_{H^4}$. We differentiate $\|u^\kappa\|_{H^4}^2$ in time  and use the equation for $u^\kappa$ to get
\begin{align*}
\frac{1}{2} \frac{\diff}{\diff{t}} \|\Delta^2 u^\kappa\|_{L^2}^2 = - \int \Delta^2 u^\kappa \cdot \Delta^2 \left(u^\kappa \cdot \nabla u^\kappa\right) - (k_B T_K) \int \Delta^2 u^\kappa \cdot \Delta^2 \left(\rho^\kappa \nabla \Phi^\kappa\right).
\end{align*}
Using integration by parts and the Sobolev calculus inequality \eqref{fgHm}, we obtain
\[
- \int \Delta^2 u^\kappa \cdot \Delta^2 \left(u^\kappa \cdot \nabla u^\kappa\right) \leq C \|\nabla u^\kappa\|_{L^\infty} \|u^\kappa\|_{H^4}^2,
\]
and
\[
- (k_B T_K) \int \Delta^2 u^\kappa \cdot \Delta^2 \left(\rho^\kappa \nabla \Phi^\kappa\right) \leq C \left(\|\rho^\kappa\|_{H^4} \|\nabla \Phi^\kappa\|_{L^\infty} + \|\rho^\kappa\|_{L^\infty} \|\nabla \Phi^\kappa\|_{H^4}\right) \|u^\kappa\|_{H^4}.
\]
Applying the Calder\'{o}n-Zygmund inequality \eqref{BKM} and the elliptic estimates for $\Phi^\kappa$, we arrive at
\begin{align*}
\frac{\diff}{\diff{t}} \|u^\kappa\|_{H^4} \leq C \|\omega^\kappa\|_{L^\infty} \left(1 + \log(1 + \|u^\kappa\|_{H^3})\right) \|u^\kappa\|_{H^4} + C \left(\|\rho^\kappa\|_{H^4} \|\nabla \Phi^\kappa\|_{L^\infty} + \|\rho^\kappa\|_{L^\infty} \|\nabla \Phi^\kappa\|_{H^4}\right).
\end{align*}
Using Gr\"{o}nwall's lemma and the bounds \eqref{ksolest}, we have that for all $0 \leq t \leq T$
\begin{align*}
\|u^\kappa(t)\|_{H^4} & \leq \exp\left(\int_0^t \|\omega^\kappa(\tau)\|_{L^\infty} \left(1 + \log(1 + \|u^\kappa(\tau)\|_{H^3})\right) \diff{\tau}\right)\\
& \qquad \cdot \bigg[\|u_0^\kappa\|_{H^4} + C \int_0^t \|\rho^\kappa(\tau)\|_{H^4} \|\nabla \Phi^\kappa(\tau)\|_{L^\infty} + \|\rho^\kappa(\tau)\|_{L^\infty} \|\nabla \Phi^\kappa(\tau)\|_{H^4} \diff{\tau}\bigg]\\
& \leq C \|u_0^\kappa\|_{H^4} + C \int _0^t \|\rho^\kappa(\tau)\|_{H^4}^2 \diff{\tau}\\
& \leq \frac{C}{\kappa} + C.
\end{align*}
When $\kappa < 1$, we have that for any $0 \leq t \leq T$,
\begin{align}
\label{H4kest}
\|u^\kappa(t)\|_{H^4} \leq \frac{C}{\kappa},
\end{align}
for some $C > 0$ depending on $T$, but not on $\kappa$.

The rest of the proof consists of two steps. The first step is to show the convergence $\|\vr^\kappa\|_{H^3} + \|\vs^\kappa\|_{H^3} + \|v^\kappa\|_{H^3} = \|\rho^\kappa - \rho\|_{H^3} + \|\sigma^\kappa - \sigma\|_{H^3} + \|u^\kappa - u\|_{H^3} \to 0$ as $\kappa \to 0$. The second step is to prove that we can choose $\kappa$ in terms of $\nu$ such that as $\nu \to 0$, both $\kappa \to 0$ and $\|\rho^\nu - \rho^\kappa\|_{H^3} + \|\sigma^\nu - \sigma^\kappa\|_{H^3} + \|u^\nu - u^\kappa\|_{H^3} \to 0$. Then the triangle inequality completes the proof.

\vspace{.5ex}\noindent {\bf Step 1.}
From the systems for $(\rho, \sigma, u)$ and $(\rho^\kappa, \sigma^\kappa, u^\kappa)$, we obtain a system of equations for $(\vr^\kappa, \vs^\kappa, v^\kappa)$
\begin{align}
\begin{split}
& \pt \vr^\kappa - v^\kappa \cdot \nabla \vr^\kappa - D \left(\Delta \vr^\kappa - \nabla \vs^\kappa \cdot \nabla \Psi^\kappa - \vs \Delta \Psi^\kappa\right)\\
& \qquad \qquad = - u^\kappa \cdot \nabla \vr^\kappa - v^\kappa \cdot \nabla \rho^\kappa + D \left(\nabla \sigma^\kappa \cdot \nabla \Psi^\kappa + \nabla \vs^\kappa \cdot \nabla \Phi^\kappa + \sigma^\kappa \Delta \Psi^\kappa + \vs^\kappa \Delta \Phi^\kappa\right),
\end{split} \label{vrkt}\\
\begin{split}
& \pt \vs^\kappa - v^\kappa \cdot \nabla \vs^\kappa - D \left(\Delta \vs^\kappa - \nabla \vr^\kappa \cdot \nabla \Psi^\kappa - \vr \Delta \Psi^\kappa\right)\\
& \qquad \qquad = - u^\kappa \cdot \nabla \vs^\kappa - v^\kappa \cdot \nabla \sigma^\kappa + D \left(\nabla \rho^\kappa \cdot \nabla \Psi^\kappa + \nabla \vr^\kappa \cdot \nabla \Phi^\kappa + \rho^\kappa \Delta \Psi^\kappa + \vr^\kappa \Delta \Phi^\kappa\right),
\end{split} \label{vskt}\\
& - \ve \Delta \Psi^\kappa = \vr^\kappa, \\
\begin{split}
& \pt v^\kappa - v^\kappa \cdot \nabla v^\kappa + \nabla (p^\kappa - p) - (k_B T_K) \vr^\kappa \nabla \Psi^\kappa\\
& \qquad \qquad = - u^\kappa \cdot \nabla v^\kappa - v^\kappa \cdot \nabla u^\kappa - (k_B T_K) \left(\rho^\kappa \nabla \Psi^\kappa + \vr^\kappa \nabla \Phi^\kappa\right),
\end{split} \label{vkt}\\
& \nabla \cdot v^\kappa = 0. 
\end{align}
We test \eqref{vrkt} and \eqref{vskt} with $- \Delta^3 \vr^\kappa$ and $- \Delta^3 \vs^\kappa$ respectively to obtain
\begin{align}
\label{vrkvskH3}
\begin{split}
& \frac{1}{2} \frac{\diff}{\diff{t}} \left(\|\nabla \Delta \vr^\kappa\|_{L^2}^2 + \|\nabla \Delta \vs^\kappa\|_{L^2}^2\right) + D \left(\|\Delta^2 \vr^\kappa\|_{L^2} + \|\Delta^2 \vs^\kappa\|_{L^2}\right)\\
&\quad = J_{1, 1} + J_{1, 2} + J_{1, 3} + J_{1, 4} + J_{1, 5} + J_{1, 6} + J_{1, 7}.
\end{split}
\end{align}
where
\begin{align*}
J_{1, 1} & = - \int \Delta^2 \vr^\kappa \Delta \left(v^\kappa \cdot \nabla \vr^\kappa\right) - \int \Delta^2 \vs^\kappa \Delta \left(v^\kappa \cdot \nabla \vs^\kappa\right),\\
J_{1, 2} & = - D \int \nabla \Delta \vr^\kappa \cdot \nabla \Delta \left(\nabla \vs^\kappa \cdot \nabla \Psi^\kappa + \vs \Delta \Psi^\kappa\right) - D \int \nabla \Delta \vs^\kappa \cdot \nabla \Delta \left(\nabla \vr^\kappa \cdot \nabla \Psi^\kappa + \vr \Delta \Psi^\kappa\right),\\
J_{1, 3} & = - \int \nabla \Delta \vr^\kappa \cdot \nabla \Delta (u^\kappa \cdot \nabla \vr^\kappa) - \int \nabla \Delta \vs^\kappa \cdot \nabla \Delta (u^\kappa \cdot \nabla \vs^\kappa)\\
& \qquad - \int \nabla \Delta \vr^\kappa \cdot \nabla \Delta (v^\kappa \cdot \nabla \rho^\kappa) - \int \nabla \Delta \vs^\kappa \cdot \nabla \Delta (v^\kappa \cdot \nabla \sigma^\kappa),\\
J_{1, 4} & =  - D \int \Delta^2 \vr^\kappa \Delta (\nabla \sigma^\kappa \cdot \nabla \Psi^\kappa + \sigma^\kappa \Delta \Psi^\kappa) - D \int \Delta^2 \vr^\kappa \Delta (\nabla \vs^\kappa \cdot \nabla \Phi + \vs^\kappa \Delta \Phi^\kappa)\\
& \qquad - D \int \Delta^2 \vs^\kappa \Delta (\nabla \rho^\kappa \cdot \nabla \Psi^\kappa + \rho^\kappa \Delta \Psi^\kappa) - D \int \Delta^2 \vs^\kappa \Delta (\nabla \vr^\kappa \cdot \nabla \Phi + \vr^\kappa \Delta \Phi^\kappa).\\
\end{align*}
We observe that the terms $J_{1, 1}$ and $J_{1, 2}$ come from the left hand sides of \eqref{vrkt}--\eqref{vskt}, which are the same as the right hand sides of \eqref{DtH3}--\eqref{DtH3estlast}. So if we repeat the estimates \eqref{DtH3est1}--\eqref{DtH3est6}, we get
\begin{align}
\begin{split}
J_{1, 1} + J_{1, 2} & \leq \frac{4 D}{5} \left(\|\Delta^2 \vr^\kappa\|_{L^2}^2 + \|\Delta^2 \vs^\kappa\|_{L^2}^2\right) - \frac{D}{\ve} \int \vs^\kappa |\nabla \Delta \vr^\kappa|^2\\
& \quad + C \left(\|v^\kappa\|_{H^2}^2 + \|\nabla \Psi^\kappa\|_{L^\infty}^2 + \|\vr^\kappa\|_{H^2} + \|\vs^\kappa - \bar{\vs}^\kappa\|_{H^2}\right) \left(\|\nabla \Delta \vr^\kappa\|_{L^2}^2 + \|\nabla \Delta \vs^\kappa\|_{L^2}^2\right)\\
& \quad + C \left(\|\nabla \vr^\kappa\|_{L^2}^2 + \|\nabla \vs^\kappa\|_{L^2}^2 + \|\vr^\kappa\|_{L^4} \|\vr^\kappa\|_{H^2} + \|\vr^\kappa\|_{L^4} \|\vs^\kappa - \bar{\vs}^\kappa\|_{H^2}\right) \left(\|\nabla \Delta \vr^\kappa\|_{L^2} + \|\nabla \Delta \vs^\kappa\|_{L^2}\right)\\
& \quad + C \left(\|\Delta v^\kappa\|_{L^2} \|\nabla v^\kappa\|_{L^2} + \|\nabla v^\kappa\|_{L^2}^2 + \|\nabla \vr^\kappa\|_{L^2} + \|\nabla \vs^\kappa\|_{L^2}\right) \left(\|\Delta \vr^\kappa\|_{L^2}^2 + \|\Delta \vs^\kappa\|_{L^2}^2\right)
\end{split} \notag\\
& \quad + C \left(\|\nabla \vr^\kappa\|_{L^2}^2 + \|\nabla \vs^\kappa\|_{L^2}^2\right) \left(\|\Delta \vr^\kappa\|_{L^2} + \|\Delta \vs^\kappa\|_{L^2}\right). \label{vrkvskH3-1}
\end{align}
Notice that in \eqref{vrkvskH3-1}, the sign of $\vs^\kappa = \sigma^\kappa - \sigma$ is unknown, so we cannot discard the term $- \frac{D}{\ve} \int \vs^\kappa |\nabla \Delta \vr^\kappa|^2$. Instead, we estimate this term directly
\begin{align}
- \frac{D}{\ve} \int \vs^\kappa |\nabla \Delta \vr^\kappa|^2 \leq \frac{D}{\ve} \|\vs^\kappa\|_{L^\infty} \|\nabla \Delta \vr^\kappa\|_{L^2}^2 \leq C \|\vs^\kappa\|_{H^2} \|\nabla \Delta \vr^\kappa\|_{L^2}^2.
\end{align}
The term $J_{1, 3}$, involving the velocity fields $u^\kappa$ or $v^\kappa$, we rewrite as follows
\begin{align}
\begin{split}
J_{1, 3} & = J_{1, 3, 1} + J_{1, 3, 2} + J_{1, 3, 3} + \Rc,
\end{split}
\end{align}
where 
\begin{align*}
J_{1, 3, 1} & = - \int \nabla \vr^\kappa \cdot (\nabla \Delta u^\kappa \nabla \Delta \vr^\kappa) - \int \nabla \vs^\kappa \cdot (\nabla \Delta u^\kappa \nabla \Delta \vs^\kappa)\\
& \qquad - \int \nabla \rho^\kappa \cdot (\nabla \Delta v^\kappa \nabla \Delta \vr^\kappa) - \int \nabla \sigma^\kappa \cdot (\nabla \Delta v^\kappa \nabla \Delta \vs^\kappa),\\
J_{1, 3, 2} & = - \int \nabla \Delta \vr^\kappa \cdot (\nabla \nabla \Delta \vr^\kappa u^\kappa) - \int \nabla \Delta \vs^\kappa \cdot (\nabla \nabla \Delta \vs^\kappa u^\kappa),\\
J_{1, 3, 3} & = - \int \nabla \Delta \vr^\kappa \cdot (\nabla \nabla \Delta \rho^\kappa v^\kappa) - \int \nabla \Delta \vs^\kappa \cdot (\nabla \nabla \Delta \sigma^\kappa v^\kappa).
\end{align*}
and $\Rc$ represents the lower order terms,
\begin{align}
|\Rc| \leq C \left(\|v^\kappa\|_{H^3}^2 + \|\vr^\kappa\|_{H^3}^2 + \|\vs^\kappa\|_{H^3}^2\right).
\end{align}
To estimate $J_{1, 3, 1}$, we use H\"{o}lder's inequality and \eqref{ksolest},
\begin{align}
\begin{split}
J_{1, 3, 1} & \leq \|\nabla \Delta u^\kappa\|_{L^2} \left(\|\nabla \vr^\kappa\|_{L^\infty} \|\nabla \Delta \vr^\kappa\|_{L^2} + \|\nabla \vs^\kappa\|_{L^\infty} \|\nabla \Delta \vs^\kappa\|_{L^2}\right)\\
& \qquad + \|\nabla \Delta v^\kappa\|_{L^2} \left(\|\nabla \rho^\kappa\|_{L^\infty} \|\nabla \Delta \vr^\kappa\|_{L^2} + \|\nabla \sigma^\kappa\|_{L^\infty} \|\nabla \Delta \vs^\kappa\|_{L^2}\right).
\end{split}
\end{align}
The estimate for $J_{1, 3, 2}$ follows from H\"{o}lder's inequality, Young's inequality, and \eqref{ksolest}
\begin{align}
\begin{split}
J_{1, 3, 2} & \leq C \|\nabla \Delta \vr^\kappa\|_{L^2} \|u^\kappa\|_{L^\infty} \|\Delta^2 \vr^\kappa\|_{L^2} + C \|\nabla \Delta \vs^\kappa\|_{L^2} \|u^\kappa\|_{L^\infty} \|\Delta^2 \vs^\kappa\|_{L^2}\\
& \leq \frac{D}{15} \left(\|\Delta^2 \vr^\kappa\|_{L^2}^2 + \|\Delta^2 \vs^\kappa\|_{L^2}^2\right) + C \left(\|\nabla \Delta \vr^\kappa\|_{L^2}^2  + \|\nabla \Delta \vs^\kappa\|_{L^2}^2\right).
\end{split}
\end{align}
For $J_{1, 3, 3}$, we first integrate by parts, then use H\"{o}lder's inequality and Young's inequality as well as \eqref{ksolest} to get
\begin{align}
\begin{split}
J_{1, 3, 3} & \leq \|\nabla v^\kappa\|_{L^\infty} \left(\|\nabla \Delta \rho^\kappa\|_{L^2} \|\nabla \Delta \vr^\kappa\|_{L^2} + \|\nabla \Delta \sigma^\kappa\|_{L^2} \|\nabla \Delta \vs^\kappa\|_{L^2}\right)\\
& \qquad + \|v^\kappa\|_{L^\infty} \left(\|\nabla \Delta \rho^\kappa\|_{L^2} \|\Delta^2 \vr^\kappa\|_{L^2} + \|\nabla \Delta \sigma^\kappa\|_{L^2} \|\Delta^2 \vs^\kappa\|_{L^2}\right)\\
& \leq C \|\nabla v^\kappa\|_{L^\infty} \left(\|\nabla \Delta \vr^\kappa\|_{L^2} + \|\nabla \Delta \vs^\kappa\|_{L^2}\right) + C \|v^\kappa\|_{L^\infty}^2 + \frac{D}{15} \left(\|\Delta^2 \vr^\kappa\|_{L^2}^2 + \|\Delta^2 \vs^\kappa\|_{L^2}^2\right).
\end{split}
\end{align}
Lastly, for the term $J_{1, 4}$, we use the Sobolev calculus inequality \eqref{fgHm}, Young's inequality, 
the elliptic estimates $\|\Delta \Psi^\kappa\|_{L^2} \leq C \|\vr^\kappa\|_{L^2}$ and the bound $\|\nabla \Psi^\kappa\|_{L^\infty} \leq C \|\vr^\kappa\|_{H^1}$ to obtain
\begin{align}
\label{vrkvskH3-7}
\begin{split}
J_{1, 4} & \leq C \|\Delta^2 \vr^\kappa\|_{L^2} \big(\|\sigma^\kappa\|_{H^3} \|\nabla \Psi^\kappa\|_{L^\infty} + \|\sigma^\kappa\|_{L^\infty} \|\nabla \Psi^\kappa\|_{H^3} + \|\vs^\kappa\|_{H^3} \|\nabla \Phi^\kappa\|_{L^\infty} + \|\vs^\kappa\|_{L^\infty} \|\nabla \Phi^\kappa\|_{H^3}\big)\\
& \qquad + C \|\Delta^2 \vs^\kappa\|_{L^2} \big(\|\rho^\kappa\|_{H^3} \|\nabla \Psi^\kappa\|_{L^\infty} + \|\rho^\kappa\|_{L^\infty} \|\nabla \Psi^\kappa\|_{H^3} + \|\vr^\kappa\|_{H^3} \|\nabla \Phi^\kappa\|_{L^\infty} + \|\vr^\kappa\|_{L^\infty} \|\nabla \Phi^\kappa\|_{H^3}\big)
\end{split} \notag\\
& \leq \frac{D}{15} \left(\|\Delta^2 \vr^\kappa\|_{L^2}^2 + \|\Delta^2 \vs^\kappa\|_{L^2}\right) + C \left(\|\vr^\kappa\|_{H^3}^2 + \|\vs^\kappa\|_{H^3}^2\right),
\end{align}
where in the last line, we also used the bounds \eqref{ksolest}.

Putting the estimates \eqref{vrkvskH3-1}--\eqref{vrkvskH3-7} into \eqref{vrkvskH3}, discarding the dissipation terms, and using the Sobolev embedding $H^3(\T^2) \hookrightarrow L^4(\T^2)$, the estimate $\|\nabla \Psi^\kappa\|_{L^\infty} \leq C \|\vr^\kappa\|_{H^1}$, and the bounds \eqref{ksolest} for $(\vr^\kappa, \vs^\kappa, v^\kappa)$, we have 
\begin{align}
\label{vrvskH3}
\begin{split}
& \frac{\diff}{\diff{t}} \left(\|\nabla \Delta \vr^\kappa\|_{L^2}^2 + \|\nabla \Delta \vs^\kappa\|_{L^2}^2\right)\\
&\quad\leq C \left(\|v^\kappa\|_{H^2}^2 + \|\nabla \Psi^\kappa\|_{L^\infty}^2 + \|\nabla \vr^\kappa\|_{L^2} + \|\nabla \vs^\kappa\|_{L^2}\right) \left(\|\nabla \Delta \vr^\kappa\|_{L^2}^2 + \|\nabla \Delta \vs^\kappa\|_{L^2}^2\right)\\
&\qquad + C \left(\|\nabla \vr^\kappa\|_{L^2}^2 + \|\nabla \vs^\kappa\|_{L^2}^2 + \|\vr^\kappa\|_{L^4} \|\vs^\kappa\|_{H^2} + \|\vr^\kappa\|_{L^4} \|\vs^\kappa\|_{H^2}\right) \left(\|\nabla \Delta \vr^\kappa\|_{L^2} + \|\nabla \Delta \vs^\kappa\|_{L^2}\right)\\
&\qquad + C \left(\|\Delta v^\kappa\|_{L^2} \|\nabla v^\kappa\|_{L^2} + \|\nabla v^\kappa\|_{L^2}^2 + \|\nabla \vr^\kappa\|_{L^2} + \|\nabla \vs^\kappa\|_{L^2}\right) \left(\|\Delta \vr^\kappa\|_{L^2}^2 + \|\Delta \vs^\kappa\|_{L^2}^2\right)\\
&\qquad + C \left(\|\nabla \vr^\kappa\|_{L^2}^2 + \|\nabla \vs^\kappa\|_{L^2}^2\right) \left(\|\Delta \vr^\kappa\|_{L^2} + \|\Delta \vs^\kappa\|_{L^2}\right) + C \left(\|\vr^\kappa\|_{H^3}^2 + \|\vs^\kappa - \bar{\vs}^\kappa\|_{H^3}^2\right)\\
&\qquad + C \|\nabla v^\kappa\|_{L^\infty} \left(\|\nabla \Delta \vr^\kappa\|_{L^2} + \|\nabla \Delta \vs^\kappa\|_{L^2}\right) + C \|v^\kappa\|_{L^\infty}^2\\
&\quad\leq C \left(\|v^\kappa\|_{H^3}^2 + \|\vr^\kappa\|_{H^3}^2 + \|\vs^\kappa - \bar{\vs}^\kappa\|_{H^3}^2\right).
\end{split}
\end{align}
We now turn to the $H^3$-estimates for the velocity difference $v^\kappa$. Testing \eqref{vkt} with $- \Delta^3 v^\kappa$, we obtain
\begin{align}
\label{vkH4rhs}
\begin{split}
\frac{1}{2} \frac{\diff}{\diff{t}} \|\nabla \Delta v^\kappa\|_{L^2}^2 = J_{2, 1} + J_{2, 2} + J_{2, 3},
\end{split}
\end{align}
where
\begin{align*}
J_{2, 1} & = \int \nabla \Delta v^\kappa \cdot \nabla \Delta \left(v^\kappa \cdot \nabla v^\kappa\right) + (k_B T_K) \int \nabla \Delta v^\kappa \cdot \nabla \Delta \left(\vr^\kappa \nabla \Psi^\kappa\right),\\
J_{2, 2} & = - \int \nabla \Delta (u^\kappa \cdot \nabla v^\kappa) : \nabla \Delta v^\kappa,\\
J_{2, 3} & = - \int \nabla \Delta (v^\kappa \cdot \nabla u^\kappa) : \nabla \Delta v^\kappa - (k_B T_K) \int \nabla \Delta \left(\rho^\kappa \nabla \Psi^\kappa + \vr^\kappa \nabla \Phi^\kappa\right) : \nabla \Delta v^\kappa.
\end{align*}
Notice that the term $J_{2, 1}$ comes from the left hand side of \eqref{vkt}, which is the same as the right hand side of \eqref{euler}. Then if we repeat the estimates \eqref{Dalphau} and use \eqref{ksolest}, we have that
\begin{align}
\label{J21}
\begin{split}
J_{2, 1} & \leq C \|\nabla v^\kappa\|_{L^\infty} \|v^\kappa\|_{H^3}^2 + C \|v^\kappa\|_{H^3} \left(\|\vr^\kappa\|_{L^\infty} \|\nabla \Psi^\kappa\|_{H^3} + \|\nabla \Psi^\kappa\|_{L^\infty} \|\vr^\kappa\|_{H^3}\right)\\
& \leq C \left(\|v^\kappa\|_{H^3}^2 + \|\vr^\kappa\|_{H^3}^2\right).
\end{split}
\end{align}
Applying the Leibniz rule to $J_{2, 2}$, the most dangerous term vanishes,
\[
- \int u^\kappa \cdot (\nabla \nabla \Delta v^\kappa : \nabla \Delta v^\kappa) = - \frac{1}{2} \int u^\kappa \cdot \nabla |\nabla \Delta v^\kappa|^2 = 0,
\]
because of the incompressibility of $u^\kappa$. The lower order terms can be estimated using the calculus inequality \eqref{fgHm} and the bounds \eqref{ksolest} so that
\begin{align}
\label{J22}
J_{2, 2} \leq C \|v^\kappa\|_{H^3}^2.
\end{align}
For the integrals in $J_{2, 3}$, we use the calculus inequality \eqref{fgHm} to get
\begin{align}
\label{J23}
\begin{split}
J_{2, 3} & \leq C \big(\|\nabla u^\kappa\|_{L^\infty} \|v^\kappa\|_{H^3} + \|u^\kappa\|_{H^4} \|v^\kappa\|_{L^\infty} + \|\rho^\kappa\|_{H^3} \|\nabla \Psi^\kappa\|_{L^\infty} + \|\rho^\kappa\|_{L^\infty} \|\nabla \Psi^\kappa\|_{H^3}\\
& \qquad + \|\nabla \Phi^\kappa\|_{L^\infty} \|\vr^\kappa\|_{H^3} + \|\nabla \Phi^\kappa\|_{H^3} \|\vr^\kappa\|_{L^\infty}\big) \|v^\kappa\|_{H^3}\\
& \leq C \big(\|v^\kappa\|_{H^3} + \|u^\kappa\|_{H^4} \|v^\kappa\|_{L^\infty} + \|\vr^\kappa\|_{H^3}\big) \|v^\kappa\|_{H^3},
\end{split}
\end{align}
where in the last inequality, we utilised the estimates $\|\nabla \Psi^\kappa\|_{L^\infty} \leq C \|\nabla \Psi^\kappa\|_{H^3} \leq C \|\vr^\kappa\|_{H^3}$, the Sobolev embedding $H^2(\T^2) \hookrightarrow L^\infty(\T^2)$, and the bounds \eqref{ksolest}.

The problematic term is $\|u^\kappa\|_{H^4} \|v^\kappa\|_{L^\infty} \|v^\kappa\|_{H^3}$. We need to use the lower norm $\|v^\kappa\|_{L^\infty}$ to balance the higher norm $\|u^\kappa\|_{H^4}$. For any $1 < s < 2$, we estimate
\begin{align}
\label{vkHs1}
\frac{\diff}{\diff{t}} \|v^\kappa\|_{H^s} \leq C (\|v^\kappa\|_{H^3} + \|u^\kappa\|_{H^3}) \|v^\kappa\|_{H^s} + C (\|\rho^\kappa\|_{H^3} + \|\vr^\kappa\|_{H^3}) \|\vr^\kappa\|_{H^s},
\end{align}
and
\begin{align}
\label{vkHs2}
\begin{split}
& \frac{\diff}{\diff{t}} \left(\|\vr^\kappa\|_{H^s} + \|\vs^\kappa\|_{H^s}\right)\\
&\quad\leq C \left(\|v^\kappa\|_{H^3} + \|u^\kappa\|_{H^3} + \|\rho^\kappa\|_{H^3} + \|\vr^\kappa\|_{H^3} + \|\sigma^\kappa\|_{H^3} + \|\vs^\kappa\|_{H^3}\right) \left(\|\vr^\kappa\|_{H^s} + \|\vs^\kappa\|_{H^s}\right).
\end{split}
\end{align}
Summing \eqref{vkHs1} and \eqref{vkHs2}, applying Gr\"{o}nwall's inequality, and invoking the bounds in \eqref{initk}, we obtain
\begin{align}
\|v^\kappa\|_{L^\infty(0, T; H^s)} + \|\vr^\kappa\|_{L^\infty(0, T; H^s)} + \|\vs^\kappa\|_{L^\infty(0, T; H^s)} \leq C \kappa^{3 - s}.
\end{align}
Thus, from \eqref{H4kest} and the Sobolev embedding $\|v^\kappa\|_{L^\infty} \leq C \|v^\kappa\|_{H^s}$, we deduce
\begin{align}
\label{vkHs4}
\|u^\kappa\|_{H^4} \|v^\kappa\|_{L^\infty} \|v^\kappa\|_{H^3} \leq C \kappa^{-1} \kappa^{3 - s} \|v^\kappa\|_{H^3} = C \kappa^{2 - s} \|v^\kappa\|_{H^3}.
\end{align}
Consequently, using \eqref{vkHs4} in \eqref{J23}, we have
\begin{align}
\label{J23-2}
J_{2, 3} \leq C \big(\|v^\kappa\|_{H^3} + \|\vr^\kappa\|_{H^3}\big) \|v^\kappa\|_{H^3} + C \kappa^{2 - s} \|v^\kappa\|_{H^3}.
\end{align}
Now gathering the estimates \eqref{J21}--\eqref{J22} and \eqref{J23-2} in \eqref{vkH4rhs}, we obtain the estimate
\begin{align}
\label{vkH3}
\frac{\diff}{\diff{t}} \|v^\kappa\|_{H^3}^2 \leq  C \left(\|v^\kappa\|_{H^3}^2 + \|\vr^\kappa\|_{H^3}^2\right) + C \kappa^{2 - s} \|v^\kappa\|_{H^3},
\end{align}
where $1 < s < 2$.

Summing \eqref{vrvskH3} and \eqref{vkH3} and dividing by $(\|\vr^\kappa\|_{H^3} + \|\vs^\kappa\|_{H^3} + \|v^\kappa\|_{H^3})$, we have that
\[
\frac{\diff}{\diff{t}} \left(\|\vr^\kappa\|_{H^3} + \|\vs^\kappa\|_{H^3} + \|v^\kappa\|_{H^3}\right) \leq C \left(\|v^\kappa\|_{H^3} + \|\vr^\kappa\|_{H^3} + \|\vs^\kappa\|_{H^3}\right) + C \kappa^{2 - s},
\]
which implies $(\|\vr^\kappa\|_{L^\infty(0, T; H^3)} + \|\vs^\kappa - \bar{\vs}^\kappa\|_{L^\infty(0, T; H^3)} + \|v^\kappa\|_{L^\infty(0, T; H^3)}) \to 0$ as $\kappa \to 0$. Indeed, we have
\begin{align}
\label{H3conv1}
\begin{split}
& \|\rho^\kappa - \rho\|_{L^\infty(0, T; H^3)} + \|\sigma^\kappa - \sigma\|_{L^\infty(0, T; H^3)} + \|u^\kappa - u\|_{L^\infty(0, T; H^3)}\\
&\quad\leq C \left(\|c_1^\kappa(0) - c_1(0)\|_{H^3} + \|c_2^\kappa(0) - c_2(0)\|_{H^3} + \|u_0^\kappa - u_0\|_{H^3} + \kappa^{2 - s} T\right).
\end{split}
\end{align}

\vspace{.5ex}\noindent {\bf Step 2.}
We now derive the energy estimates for the difference between the solutions $(\rho^\nu, \sigma^\nu, u^\nu)$ to the initial value problem for the Nernst-Planck-Navier-Stokes system and the solutions $(\rho^\kappa, \sigma^\kappa, u^\kappa)$ to the Nernst-Planck-Euler system with regularized initial data $(\rho^\kappa(0), \sigma^\kappa(0), u^\kappa_0)$. We denote by $(\vr^\nk, \vs^\nk, v^\nk) = (\rho^\nu - \rho^\kappa, \sigma^\nu - \sigma^\kappa, u^\nu - u^\kappa)$ and by $\Psi^\nk$ the solutions to the Poisson equation $- \ve \Delta \Psi^\nk = \vr^\nk$. Note that by the triangle inequality, $(\vr^\nk, \vs^\nk, v^\nk)$ also satisfy the bounds \eqref{ksolest}. Similar to \eqref{vrvskH3}, we have that
\begin{align}
\label{rsnkH3}
\begin{split}
& \frac{\diff}{\diff{t}} \left(\|\nabla \Delta \vr^\nk\|_{L^2}^2 + \|\nabla \Delta \vs^\nk\|_{L^2}^2\right) \leq C \left(\|v^\nk\|_{H^3}^2 + \|\vr^\nk\|_{H^3}^2 + \|\vs^\nk\|_{H^3}^2\right).
\end{split}
\end{align}

The $H^3$-estimates for $v^\nk$ requires special attentions. The equation satisfied by $v^\nk$ is
\begin{align*}
& \pt v^\nk + v^\nk \cdot \nabla v^\nk - \nu \Delta v^\nk + \nabla (p^\nu - p^\kappa)\\
&\quad= \nu \Delta u^\kappa - u^\kappa \cdot \nabla v^\nk - v^\nk \cdot \nabla u^\kappa - (k_B T_K) \left(\vr^\nk \nabla \Psi^\nk + \vr^\nk \nabla \Phi^\kappa + \rho^\kappa \nabla \Psi^\nk\right).
\end{align*}
Testing with $- \Delta^3 v^\nk$ and integrating by parts, we obtain
\begin{align}
\label{vnkH3}
\frac{1}{2} \frac{\diff}{\diff{t}} \|\nabla \Delta v^\nk\|_{L^2}^2 + \nu \|\Delta^2 v^\nk\|_{L^2}^2 = J_{3, 1} + J_{3, 2} + J_{3, 3} + J_{3, 4},
\end{align}
where
\begin{align*}
J_{3, 1} & = - \nu \int \Delta^2 u^\kappa : \Delta^2 v^\nk,\\
J_{3, 2} & = - \int \nabla \Delta (v^\nk \cdot \nabla v^\nk) : \nabla \Delta v^\nk - \int \nabla \Delta (u^\kappa \cdot \nabla v^\nk) : \nabla \Delta v^\nk,\\
J_{3, 3} & =  - \int \nabla \Delta (v^\nk \cdot \nabla u^\kappa) : \nabla \Delta v^\nk,\\
J_{3, 4} & = - (k_B T_K) \int \nabla \Delta (\vr^\nk \nabla \Psi^\nk + \vr^\nk \nabla \Phi^\kappa + \rho^\kappa \nabla \Psi^\nk) : \nabla \Delta v^\nk.
\end{align*}
To estimate the term $J_{3, 1}$, we use the Cauchy-Schwarz inequality and Young's inequality to obtain
\begin{align}
\label{J31}
J_{3, 1} \leq \nu \|\Delta^2 u^\kappa\|_{L^2} \|\Delta^2  v^\nk\|_{L^2} \leq \frac{\nu}{2} \|\Delta^2 v^\nk\|_{L^2}^2 + \frac{\nu}{2} \|\Delta^2 u^\kappa\|_{L^2}^2.
\end{align}
After using the Leibniz rule, we write $J_{3, 2}$
\begin{align}
J_{3, 2} = J_{3, 2, 1} + \Rc'_1,
\end{align}
where
\[
J_{3, 2, 1} = - \int v^\nk \cdot (\nabla \nabla \Delta v^\nk : \nabla \Delta v^\nk) - \int u^\kappa \cdot (\nabla \nabla \Delta v^\nk : \nabla \Delta v^\nk),
\]
and $\Rc_1$ represents the lower order terms, which we estimate using the Sobolev calculus inequality \eqref{fgHm}, Sobolev embeddings and \eqref{ksolest}
\begin{align}
\begin{split}
|\Rc_1| & \leq C \|\nabla v^\nk\|_{L^\infty} \|v^\nk\|_{H^3}^2 + C \|\nabla v^\nk\|_{L^\infty} \|u^\kappa\|_{H^3} \|v^\nk\|_{H^3} + C \|\nabla u^\kappa\|_{L^\infty} \|v^\nk\|_{H^3} \|v^\nk\|_{H^3}\\
& \leq C \|v^\nk\|_{H^3}^3 + C \|v^\nk\|_{H^3}^2.
\end{split}
\end{align}
The highest order term $J_{3, 2, 1}$ vanishes by integration by parts and the incompressibility of the velocity fields
\begin{align}
J_{3, 2, 1} = - \frac{1}{2} \int (v^\nk + u^\kappa) \cdot \nabla |\nabla \Delta v^\kappa|^2 = 0.
\end{align}
For $J_{3, 3}$, we have
\begin{align}
J_{3, 3} = J_{3, 3, 1} + \Rc'_2,
\end{align}
where
\begin{align}
J_{3, 3, 1} = \int v^\nk \cdot (\nabla \nabla \Delta u^\kappa : \nabla \Delta v^\nk) \leq C \|v^\nk\|_{L^\infty} \|u^\kappa\|_{H^4} \|v^\nk\|_{H^3},
\end{align}
and the lower order term $\Rc'_2$ satisfies the same bounds as $\Rc'_1$
\begin{align}
|\Rc'_2| \leq C \|v^\nk\|_{H^3}^3 + C \|v^\nk\|_{H^3}^2.
\end{align}
For the integrals involving the electric volume force, we apply \eqref{fgHm} to get
\begin{align}
\label{J34}
\begin{split}
J_{3, 4} & \leq C \big(\|\vr^\nk\|_{H^3} \|\nabla \Psi^\nk\|_{L^\infty} + \|\vr^\nk\|_{L^\infty} \|\nabla \Psi^\nk\|_{H^3} + \|\vr^\nk\|_{H^3} \|\nabla \Phi^\kappa\|_{L^\infty}\\
& \qquad + \|\vr^\nk\|_{L^\infty} \|\nabla \Phi^\kappa\|_{H^3}+ \|\rho^\kappa\|_{H^3} \|\nabla \Psi^\nk\|_{L^\infty} + \|\rho^\kappa\|_{L^\infty} \|\nabla \Psi^\nk\|_{H^3}\big) \|v^\nk\|_{H^3}\\
& \leq C \left(\|\vr^\nk\|_{H^3}^2 + \|\vr^\nk\|_{H^3}\right) \|v^\nk\|_{H^3},
\end{split}
\end{align}
where the last inequality follows from \eqref{ksolest} and the bounds $\|\nabla \Psi^\nk\|_{L^\infty} \leq \|\vr^\nk\|_{H^1}$.
Putting the estimates \eqref{J31}--\eqref{J34} into \eqref{vnkH3}, we arrive at
\begin{align}
\label{vnkL2}
\begin{split}
\frac{\diff}{\diff{t}} \|\nabla \Delta v^\nk\|_{L^2}^2 + \nu \|\Delta^2 v^\nk\|_{L^2}^2 & \leq \nu \|u^\kappa\|_{H^4}^2 + C \|v^\nk\|_{L^\infty} \|u^\kappa\|_{H^4} \|v^\nk\|_{H^3} + C \|v^\nk\|_{H^3}^3\\
& \qquad + C \|v^\nk\|_{H^3}^2 + C \left(\|\vr^\nk\|_{H^3}^2 + \|\vr^\nk\|_{H^3}\right) \|v^\nk\|_{H^3}.
\end{split}
\end{align}
Similar to \eqref{vkHs1}--\eqref{vkHs4}, we have
\[
\|v^\nk\|_{L^\infty} \|u^\kappa\|_{H^4} \|v^\nk\|_{H^3} \leq C \kappa^{2 - s} \|v^\nk\|_{H^3},
\]
for some $1 < s < 2$, and by \eqref{H4kest}, we also have
\[
\nu \|u^\kappa\|_{H^4}^2 \leq \frac{C \nu}{\kappa^2}.
\]
Hence, summing \eqref{vnkL2} and \eqref{rsnkH3}, discarding the dissipation term, and using the uniform $L^\infty(0, T; H^3(\T^2))$ bounds for $v^\nk$, we conclude that
\[
\frac{\diff}{\diff{t}} \left(\|v^\nk\|_{H^3}^2 + \|\vr^\nk\|_{H^3}^2 + \|\vs^\nk\|_{H^3}^2\right) \leq \frac{C \nu}{\kappa^2} + C \kappa^{2 - s} + C \left(\|v^\nk\|_{H^3}^2 + \|\vr^\nk\|_{H^3}^2 + \|\vs^\nk\|_{H^3}^2\right).
\]
Then by Gr\"{o}nwall's lemma and the triangle inequality, we deduce
\begin{align*}
& \|v^\nk\|_{L^\infty(0, T; H^3)}^2 + \|\vr^\nk\|_{L^\infty(0, T; H^3)}^2 + \|\vs^\nk - \bar{\vs}^\nk\|_{L^\infty(0, T; H^3)}^2\\
&\quad\leq C \left(\|u_0^\nu - u_0^\kappa\|_{H^3}^2 + \|c_1^\nu(0) - c_1^\kappa(0)\|_{H^3}^2 + \|c_2^\nu(0) - c_2^\kappa(0)\|_{H^3}^2\right) + C T \bigg(\frac{\nu}{\kappa^2} + \kappa^{2 - s}\bigg)\\
&\quad\leq C \left(\|u_0^\nu - u_0\|_{H^3}^2 + \|c_1^\nu(0) - c_1(0)\|_{H^3}^2 + \|c_2^\nu(0) - c_2(0)\|_{H^3}^2\right)\\
&\qquad + C \left(\|u_0 - u_0^\kappa\|_{H^3}^2 + \|c_1(0) - c_1^\kappa(0)\|_{H^3}^2 + \|c_2(0) - c_2^\kappa(0)\|_{H^3}^2\right) + C T \bigg(\frac{\nu}{\kappa^2} + \kappa^{2 - s}\bigg).
\end{align*}
We can choose, for instance,
\[
\kappa = \nu^{\frac{1}{3}},
\]
to obtain
\[
\lim_{\nu \to 0^+} \left(\|u_0 - u_0^\kappa\|_{H^3}^2 + \|c_1(0) - c_1^\kappa(0)\|_{H^3}^2 + \|c_2(0) - c_2^\kappa(0)\|_{H^3}^2\right) = 0
\]
by \eqref{initk}, and
\[
\lim_{\nu \to 0^+} \bigg(\frac{\nu}{\kappa^2} + \kappa^{2 - s}\bigg) = 0,
\]
because $1 < s < 2$. Finally, invoking the asymptotically matching initial condition \eqref{initconv} and the estimate \eqref{H3conv1} and applying the triangle inequality complete the proof of \eqref{H3conv}.
\end{proof}

We remark that a rate of convergence of the $H^3$-norms in Theorem~\ref{thm:vanish} can be obtained using the method in \cite{Mas07}, but we do not pursue it here.

\appendix

\section{Global well-posedness and uniqueness of the regularized system}
\label{sec:lwpreg}

In this appendix, we fix $\ell > 0$ in \eqref{rhol}--\eqref{approxinitell} and drop the parameter $\ell$ for simplicity of notation. We consider the system 
\begin{align}
& \pt \rho = - [u] \cdot \nabla \rho + D (\Delta \rho + \nabla \sigma \cdot \nabla \Phi + \sigma \Delta \Phi), \label{rhoa}\\
& \pt \sigma = - [u] \cdot \nabla \sigma + D (\Delta \sigma + \nabla \rho \cdot \nabla \Phi + \rho \Delta \Phi), \label{sigmaa}\\
& - \ve \Delta \Phi = \rho, \la{poia}\\
& \pt u + [u] \cdot \nabla u + [\na u]^* u + \nabla p = - (k_B T_K) \rho \nabla \Phi, \label{ua}\\
& \nabla \cdot u = 0, \label{incompapprox}
\end{align}
with initial data
\begin{align}
\label{approxinit}
\begin{split}
& \rho(\cdot, 0) = \rho(0) \in W^{1, r}(\T^2),\\
& \sigma(\cdot, 0) = \sigma(0) \in W^{1, r}(\T^2),\\
&\sigma(0) \ge |\rho(0)|,\\
& u(\cdot, 0) = u_0 \in W^{1,r}(\T^2) \quad \text{and} \quad \div u_0 = 0
\end{split}
\end{align}
with $r\ge 2$.
We prove first the local existence and uniqueness of solutions of the system \eqref{rhoa}--\eqref{incompapprox} with initial data \eqref{approxinit}.

\begin{proposition}
\label{loca}
Given $r \ge 2$ and $\ell > 0$, there exists a time $T_0 > 0$ depending only on $r$, $M_r$, and $\ell$ such that for any initial data with
\[
\|u_0\|_{W^{1,r}(\T^2)} + \|\rho(0)\|_{W^{1, r}(\T^2)} + \|\sigma(0)\|_{W^{1, r}(\T^2)} \le  M_r,
\]
there exists a unique solution of \eqref{rhoa}--\eqref{incompapprox} with initial data \eqref{approxinit}. The solution satisfies
\be
\sup_{t \in [0, T_0]} \|u(t)\|_{W^{1,r}(\T^2)} + \|\rho(t)\|_{W^{1, r}(\T^2)} + \|\sigma(t)\|_{W^{1, r}(\T^2)} \le 6M_r.
\la{mrbound}
\ee
\end{proposition}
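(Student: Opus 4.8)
The plan is to construct the solution by an iteration that decouples the system into three linear subproblems at each step, solve each by standard linear theory, and then pass to the limit. Writing $c_i^{n+1}=(\sigma^{n+1}\pm\rho^{n+1})/2$, given the $n$-th iterate $(\rho^n,\sigma^n,u^n)$ I first set $\Phi^n=-\ve^{-1}\Delta^{-1}\rho^n$ (using that $\int\rho^n=0$ is preserved), then solve the \emph{linear} transport--diffusion equations
\[
\pt c_i^{n+1}+\na\cdot\big(c_i^{n+1}([u^n]-z_iD\na\Phi^n)\big)=D\Delta c_i^{n+1},\qquad i=1,2,
\]
and finally the \emph{linear} vortex-method equation
\[
\pt u^{n+1}+[u^n]\cdot\na u^{n+1}+[\na u^n]^*u^{n+1}+\na p^{n+1}=-(k_BT_K)\rho^n\na\Phi^n,\qquad \na\cdot u^{n+1}=0 .
\]
Two structural facts drive everything. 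First, mollification turns the $W^{1,r}$ bound on $u^n$ into $C^\infty$ bounds on $[u^n]$ and $[\na u^n]$ with constants depending on $\ell$, so the transport coefficients are as smooth as needed. Second, elliptic regularity for \eqref{poia} gains two derivatives, so $\na\Phi^n$ and $\Delta\Phi^n$ are controlled by $\|\rho^n\|_{W^{1,r}}$ and every electro-coupling term is subordinate to the $D\Delta$ diffusion.

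Next I would establish the uniform bound \eqref{mrbound}. For the concentrations this is a parabolic $W^{1,r}$ energy estimate: testing the $c_i^{n+1}$ equation against $c_i^{n+1}|c_i^{n+1}|^{r-2}$ and its gradient analogue, all forcing terms are estimated by H\"older, the elliptic gain, the Sobolev embedding $W^{1,r}(\T^2)\hookrightarrow L^\infty$ (for $r>2$; for $r=2$ one uses $H^1\hookrightarrow L^p$ with a Gagliardo--Nirenberg interpolation to create a small multiple of the dissipation), and Young's inequality, yielding $\frac{d}{dt}\|c_i^{n+1}\|_{W^{1,r}}\le P\big(\|(\rho^n,\sigma^n,u^n)\|_{W^{1,r}}\big)\,(1+\|c_i^{n+1}\|_{W^{1,r}})$ for a polynomial $P$. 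For the velocity, the vortex method is designed so that $\omega^{n+1}=\na^\perp\cdot u^{n+1}$ solves the pure transport equation $\pt\omega^{n+1}+[u^n]\cdot\na\omega^{n+1}=-(k_BT_K)\na^\perp\rho^n\cdot\na\Phi^n$; since $[u^n]$ is divergence free the $L^r$ norm of $\omega^{n+1}$ grows only through the forcing, and $\|\na u^{n+1}\|_{L^r}\le C\|\omega^{n+1}\|_{L^r}$. Running these inequalities under the induction hypothesis that the $n$-th iterate is bounded by $6M_r$ on $[0,T_0]$, a continuity/bootstrap argument shows the $(n{+}1)$-th iterate also stays below $6M_r$ once $T_0=T_0(r,M_r,\ell)$ is chosen small enough; this gives \eqref{mrbound} uniformly in $n$.

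I would then record positivity and convergence. The $c_i^{n+1}$ equation is a linear divergence-form parabolic equation; testing against the negative part $(c_i^{n+1})_-$ and absorbing the single indefinite zeroth-order contribution $\propto\int\rho^n(c_i^{n+1})_-^2$ by Gagliardo--Nirenberg into the dissipation shows $c_i^{n+1}\ge0$, hence $\sigma^{n+1}\ge|\rho^{n+1}|$ is propagated. Convergence is obtained at the $L^2$ level: the differences $(\rho^{n+1}-\rho^n,\sigma^{n+1}-\sigma^n,u^{n+1}-u^n)$ satisfy linear equations whose coefficients are controlled by the uniform $W^{1,r}$ bounds, and an energy estimate gives $\|\mathrm{diff}^{n+1}(t)\|_{L^2}\le C\int_0^t\|\mathrm{diff}^n\|_{L^2}$, whose iteration yields a summable bound of order $(CT_0)^n/n!$, so the sequence is Cauchy in $C([0,T_0];L^2)$. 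The limit $(\rho,\sigma,u)$ solves \eqref{rhoa}--\eqref{incompapprox}, inherits the $W^{1,r}$ bound \eqref{mrbound} by weak-$*$ lower semicontinuity, and uniqueness follows from the same $L^2$ difference estimate applied to two solutions.

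The main obstacle will be closing the $W^{1,r}$ estimates uniformly in $n$, and especially the endpoint $r=2$. There the top-order electro-coupling terms $\na(\na\sigma\cdot\na\Phi+\sigma\Delta\Phi)$ produce contributions such as $\sigma\,\na\Delta\Phi=-\ve^{-1}\sigma\na\rho$ and $\na\sigma\,\Delta\Phi=-\ve^{-1}\rho\,\na\sigma$ that are only borderline-subordinate to the diffusion, so the interpolation and Young steps must be arranged to leave a genuinely small multiple of $\|\na c_i^{n+1}\|_{L^2}$ (or $\|\na\na c_i^{n+1}\|_{L^2}$) to be absorbed, rather than merely a bounded one. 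The other delicate points are verifying the curl identity that makes the vortex method exactly transport the vorticity, so that no derivative is lost from the $[\na u^n]^*u^{n+1}$ term, and obtaining the contraction at the $L^2$ level \emph{without} the coercivity $\tfrac{D}{\ve}\int\sigma\rho^2$, which the iteration deliberately gives up; this is exactly why only short-time control is available and why $T_0$ must depend on $M_r$ and $\ell$.
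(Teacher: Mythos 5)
Your proposal follows essentially the same route as the paper: the paper's iteration \eqref{rhoapprox1}--\eqref{incompapprox1} is exactly your decoupled linear scheme written in the $(\rho,\sigma)$ variables (the cross-coupling of $\rho^{n+1}$ and $\sigma^{n+1}$ there diagonalizes into your linear $c_i^{n+1}$ drift--diffusion equations), and the paper likewise closes a $6M_r$ induction via $L^p$ and gradient energy estimates with $\|\nabla\Phi^n\|_{L^\infty}\le C\|\rho^n\|_{L^3}$, uses the transported vorticity equation with $\|\nabla u\|_{L^r}\le C\|\omega\|_{L^r}$, chooses $T_0\sim M_r^{-r}$, and obtains convergence and uniqueness from $L^2$ difference estimates with the factorial bound $(CT_0)^n/n!$. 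The points you flag as delicate (absorbing the borderline electro-coupling terms into the dissipation via Ladyzhenskaya/Gagliardo--Nirenberg, and the loss of the $\tfrac{D}{\ve}\int\sigma\rho^2$ coercivity forcing only short-time control) are precisely how the paper handles them.
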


\begin{proof}
We use an iteration to construct a sequence of approximate solutions for the system \eqref{rhoa}--\eqref{incompapprox}. The sequence of approximate solutions is denoted  $\{(\rho^n, \sigma^n, u^n, p^n)\}_{n \in \N}$. The iteration we consider is
\begin{align}
& \pt \rho^{n + 1} = - [u^n] \cdot \nabla \rho^{n + 1} + D (\Delta \rho^{n + 1} + \nabla \sigma^{n + 1} \cdot \nabla \Phi^n + \sigma^{n  + 1} \Delta \Phi^n), \label{rhoapprox1}\\
& \pt \sigma^{n + 1} = - [u^n] \cdot \nabla \sigma^{n + 1} + D (\Delta \sigma^{n + 1} + \nabla \rho^{n + 1} \cdot \nabla \Phi^n + \rho^{n + 1} \Delta \Phi^n), \label{sigmaapprox1}\\
& - \ve \Delta \Phi^{n + 1} = \rho^{n + 1}, \label{poissonapprox1}\\
& \pt u^{n + 1} + [u^n] \cdot \nabla u^{n + 1} + [\na u^n]^* u^{n+1} + \nabla p^{n + 1} = - (k_B T_K) \rho^{n + 1} \nabla \Phi^n, \label{eulerapprox1}\\
& \nabla \cdot u^{n + 1} = 0. \label{incompapprox1}
\end{align}
Given $\rho^n, \sigma^n, \Phi^n, u^n, p^n$, the system \eqref{rhoa}--\eqref{incompapprox1} is a linear system for $\rho^{n+1}, \sigma^{n+1}, \Phi^{n+1}, u^{n+1}, p^{n+1}$ and preserves the positivity of
\[
c_1^{n+1} = \frac{\sigma^{n+1} + \rho^{n+1}}{2} \quad \text{and} \quad c_2^{n+1} = \frac{\sigma^{n+1} - \rho^{n+1}}{2}.
\]
We prove bounds by induction. We assume that
\be
\sup_{0\le t\le T_0}\left(\|\rho^{n}(t)\|_{W^{1,r}} + \|\sigma^n(t)\|_{W^{1,r}} + \|u^{n}(t)\|_{W^{1,r}}\right)\le 6M_r.
\la{indu}
\ee
To simplify the notation, we denote
\begin{align}
\label{prime}
(\rho', \sigma', u', \Phi') = (\rho^{n + 1}, \sigma^{n + 1}, u^{n + 1}, \Phi^{n + 1}) \quad \text{and} \quad (\rho, \sigma, u, \Phi) = (\rho^n, \sigma^n, u^n, \Phi^n).
\end{align}
We start by estimating the $L^p$-norms of $\rho'$ and $\sigma'$ for any $p\ge 2$. Testing \eqref{rhoa} and \eqref{sigmaa} with $\frac{1}{p - 1} |\rho'|^{p - 2} \rho'$ and $\frac{1}{p - 1} |\sigma'|^{p - 2} \rho'$ respectively and using \eqref{poia}, we have 
\begin{align*}
& \frac{1}{p (p - 1)} \frac{\diff}{\diff{t}} \left(\|\rho'\|_{L^p}^p + \|\sigma'\|_{L^p}^p\right) + D \int |\rho'|^{p - 2} |\nabla \rho'|^2 + D \int |\sigma'|^{p - 2} |\nabla \sigma'|^2\\
&\quad= - D \int |\rho'|^{p - 2} \sigma' \nabla \rho' \cdot \nabla \Phi - D \int |\sigma'|^{p - 2} \rho' \nabla \sigma' \cdot \nabla \Phi.
\end{align*}
By H\"{o}lder's inequailty and Young's inequality, we obtain
\begin{align}
\label{rhosigmaLp00}
& \frac{1}{p (p - 1)} \frac{\diff}{\diff{t}} \left(\|\rho'\|_{L^p}^p + \|\sigma'\|_{L^p}^p\right) + D \int |\rho'|^{p - 2} |\nabla \rho'|^2 + D \int |\sigma'|^{p - 2} |\nabla \sigma'|^2\notag\\
&\quad\leq \frac{D}{2} \int |\rho'|^{p - 2} |\nabla \rho'|^2 + \frac{D}{2} \int |\sigma'|^{p - 2} |\nabla \sigma'|^2 + \frac{D}{2} \int |\rho'|^{p - 2} |\sigma'|^2 |\nabla \Phi|^2 + \frac{D}{2} \int |\sigma'|^{p - 2} |\rho'|^2 |\nabla \Phi|^2.
\end{align}
Therefore, we obtain
\begin{align}
\label{rhosigmaLp0}
\frac{\diff}{\diff{t}} \left(\|\rho'\|_{L^p}^p + \|\sigma'\|_{L^p}^p\right) \leq \frac{D p (p - 1)}{2} \|\nabla \Phi\|_{L^\infty}^2 \left(\|\rho'\|_{L^p}^p + \|\sigma'\|_{L^p}^p\right).
\end{align}
In view of the elliptic estimate
\be
\|\na\Phi\|_{L^{\infty}}\le C\|\rho\|_{L^3}
\la{naphirho}
\ee
and the induction assumption \eqref{indu} we have
\be
\sup_{0\le t\le T_0}\left (\|\rho'(t)\|_{L^p} + \|\sigma'(t)\|_{L^p}\right) \le C_pM_r\exp(CT_0M_r^2)
\la{lrnorms}
\ee
for $p\ge 2$, where we used the embedding $W^{1,r}\hookrightarrow L^p$ and estimated $\rho(0)$ and $\sigma(0)$ in $L^p$ in terms of $M_r$.
Also, in view of the fact that $r\ge 2$ we have
\be
\int_0^{T_0}\|\na\rho'(t)\|_{L^2}^2 + \|\na \sigma'(t)\|_{L^2}^2 dt \le CM_r^2(1+ T_0M_r^2\exp(CT_0 M_r^2)) = N_r^2
\la{inth1}
\ee
which follows from \eqref{rhosigmaLp00} and \eqref{lrnorms} for $p=2$.
Note that the assumption
\be
CT_0M_r^2 <\log \fr{3}{2}
\la{tmr}
\ee
implies
\be
N_r \le C M_r
\la{nrmr}
\ee
and
\be
\sup_{0\le t\le T_0}\left (\|\rho'(t)\|_{L^r} + \|\sigma'(t)\|_{L^r}\right) \le \fr{3}{2} M_r.
\la{lrnormb}
\ee
Taking the curl of the equation (\ref{ua}) we have
\be
\pa_t \omega' + [u]\cdot \na \omega' = -k_BT_K\na^{\perp}\rho'\cdot\na\Phi. 
\la{omegaprime}
\ee
We have thus, in view of (\ref{indu}), (\ref{naphirho}) and (\ref{inth1}) that
\be
\sup_{0\le t\le T_0}\|\omega'(t)\|_{L^r} \le M_r(1 + CM_rN_r\sqrt{T_0}).
\la{vortr}
\ee
We remark that the estimates can be closed at the level $L^3$ for the concentrations and $H^1$ for the velocity. 

We continue by estimating gradients of concentrations.
From \eqref{rhoapprox1}--\eqref{sigmaapprox1}, we have that
\begin{align}
\label{rhosigmaLp}
\begin{split}
& \frac{1}{r} \frac{\diff}{\diff{t}} \left(\|\nabla \rho'\|_{L^r}^r + \|\nabla \sigma'\|_{L^r}^r\right) + \mathcal{D}_1'\\
&\quad= - \int |\nabla \rho'|^{r - 2} \nabla \rho' \cdot (\nabla [u])^* \nabla \rho' - \int |\nabla \sigma'|^{r - 2} \nabla \sigma' \cdot (\nabla [u])^* \nabla \sigma'\\
&\qquad - D \int |\nabla \rho'|^{r - 2} \Delta \rho' \nabla \sigma' \cdot \nabla \Phi - D \int |\nabla \rho'|^{r - 2} \Delta \rho' \sigma' \Delta \Phi - D \int \nabla \rho' \cdot \nabla |\nabla \rho'|^{r - 2} \sigma' \Delta \Phi\\
&\qquad  - D (r - 2) \int \nabla \rho' \cdot (\nabla \nabla \rho') \cdot \nabla \rho' |\nabla \rho'|^{r - 4} \nabla \sigma' \cdot \nabla \Phi\\
&\qquad - D \int |\nabla \sigma'|^{r - 2} \Delta \sigma' \nabla \rho' \cdot \nabla \Phi - D \int |\nabla \sigma'|^{r - 2} \Delta \sigma' \rho' \Delta \Phi - D \int \nabla \sigma' \cdot \nabla |\nabla \sigma'|^{r - 2} \rho' \Delta \Phi\\
&\qquad  - D (r - 2) \int \nabla \sigma' \cdot (\nabla \nabla \sigma') \cdot \nabla \sigma' |\nabla \sigma'|^{r - 4} \nabla \rho' \cdot \nabla \Phi,
\end{split}
\end{align}
where
\[
\mathcal{D}_1' =  D \int |\nabla \rho'|^{r - 2} |\nabla \nabla \rho'|^2 + D \int |\nabla \sigma'|^{r - 2} |\nabla \nabla \sigma'|^2 + \frac{4 D (r - 2)}{r^2} \int \left|\nabla |\nabla \rho'|^{\frac{r}{2}}\right|^2 + \frac{4 D (r - 2)}{r^2} \int \left|\nabla |\nabla \sigma'|^{\frac{p}{2}}\right|^2.
\]
In a manner as for estimates \eqref{DrhoLp}--\eqref{Ygrowth}, we denote
\[
Y' = \|\nabla \rho'\|_{L^r}^r + \|\nabla \sigma'\|_{L^r}^r = \|R'\|_{L^2}^2 + \|S'\|_{L^2}^2, \quad R' = |\nabla \rho'|^{\frac{r}{2}}, \quad S' = |\nabla \sigma'|^{\frac{r}{2}}.
\]
Then using H\"{o}lder's inequality and Young's inequality in \eqref{rhosigmaLp}, we obtain
\begin{align*}
\frac{\diff}{\diff{t}} Y' + \mathcal{D}_1' & \leq \int |\nabla [u]| (|R'|^2 + |S'|^2) + \frac{D}{2} \int |\nabla \rho'|^{r - 2} |\nabla \nabla \rho'|^2 + \frac{D}{2} \int |\nabla \sigma'|^{r - 2} |\nabla \nabla \sigma'|^2\\
& \quad + 2 D ((r - 2)^2 + 1) \|\nabla \Phi\|_{L^\infty}^2 \left(\|\nabla \rho'\|_{L^r}^{r - 2} \|\nabla \sigma'\|_{L^r}^2 + \|\nabla \sigma'\|_{L^r}^{r - 2} \|\nabla \rho'\|_{L^r}^{r - 2}\right)\\
& \quad + \frac{2 D}{\ve^2} ((r - 2)^2 + 1) \int \rho^2 \left(|\nabla \rho'|^{r - 2} |\sigma'|^2 + |\nabla \sigma'|^{r - 2} |\rho'|^2\right),
\end{align*}
with
\[
\mathcal{D}_1' =  D \int |\nabla \rho'|^{r - 2} |\nabla \nabla \rho'|^2 + D \int |\nabla \sigma'|^{r - 2} |\nabla \nabla \sigma'|^2 + \frac{4 D (r - 2)}{r^2} \left(\|\nabla R'\|_{L^2}^2 + \|\nabla S'\|_{L^2}^2\right).
\]
Then we have
\begin{align}
\label{Yest'1}
\begin{split}
\frac{\diff}{\diff{t}} Y' + \mathcal{D}_2' & \leq \int |\nabla [u]| (|R'|^2 + |S'|^2) + 2 D ((r - 2)^2 + 1) \|\nabla \Phi\|_{L^\infty}^2 Y'\\
& \quad + \frac{2 D}{\ve^2} ((r - 2)^2 + 1) \int \rho^2 (|\rho'|^2 + |\sigma'|^2) \left(|R'|^{\frac{2 r - 4}{r}} + |S'|^{\frac{2 r - 4}{r}}\right),
\end{split}
\end{align}
where
\begin{align*}
\mathcal{D}_2' & = \frac{D}{2} \int |\nabla \rho'|^{r - 2} |\nabla \nabla \rho'|^2 + \frac{D}{2} \int |\nabla \sigma'|^{r - 2} |\nabla \nabla \sigma'|^2 + \frac{4 D (r - 2)}{r^2} \left(\|\nabla R'\|_{L^2}^2 + \|\nabla S'\|_{L^2}^2\right)\\
& \geq \frac{4 D (r - 2)}{r^2} \left(\|R'\|_{H^1}^2 + \|S'\|_{H^1}^2 - \|R'\|_{L^2}^2 - \|S'\|_{L^2}^2\right).
\end{align*}
Using the Ladyzhenskaya inequality we have that
\begin{align}
\label{Yest'1-1}
\begin{split}
\int |\nabla [u]| (|R'|^2 + |S'|^2) & \leq \|\nabla [u]\|_{L^2} \left(\|R'\|_{L^4}^2 + \|S'\|_{L^4}^2\right)\\
& \leq \|\nabla [u]\|_{L^2} \left(\|R'\|_{L^2} \|\nabla R'\|_{L^2} + \|R'\|_{L^2}^2 + \|S'\|_{L^2} \|\nabla S'\|_{L^2} + \|S'\|_{L^2}^2\right)\\
& \leq \frac{D (r - 2)}{r^2} \left(\|R'\|_{H^1}^2 + \|S'\|_{H^1}^2\right) + C \|\nabla [u]\|_{L^2}^2 \left(\|R'\|_{L^2}^2 + \|S'\|_{L^2}^2\right).
\end{split}
\end{align}
By H\"{o}lder's inequlaity we obtain
\begin{align}
\label{Yest'1-2}
\begin{split}
& \frac{2 D r}{\ve^2} ((r - 2)^2 + 1) \int \rho^2 (\rho^2 + \sigma^2) \left(|R'|^{\frac{2 r - 4}{r}} + |S'|^{\frac{2 r - 4}{r}}\right)\\
&\quad\leq C \Big(\|R'\|_{L^2}^{2} + \|S'\|_{L^2}^{2}\Big)^{\frac{r - 2}{r}} \Big(\|\rho\|_{L^{2r}} + \|\sigma\|_{L^{2r}}\Big)^{4}\\
&\quad\leq \|R'\|_{L^2}^2 + \|S'\|_{L^2}^2 + C \Big(\|\rho\|_{L^{2r}} + \|\sigma\|_{L^{2r}}\Big)^{2r}.
\end{split}
\end{align}
Using the inequalities \eqref{Yest'1-1}--\eqref{Yest'1-2} in \eqref{Yest'1}, we have that
\be
\frac{\diff}{\diff{t}} Y' \leq C \left(1 + \|\nabla \Phi\|_{L^\infty}^2   + \|\omega\|_{L^2}^2\right) Y' + \Big(\|\rho\|_{L^{2r}} + \|\sigma\|_{L^{2r}}\Big)^{2r}.
\la{odeY'}
\ee
By the induction assumption (\ref{indu}) it follows 
\be
\sup_{0\le t\le T_0}Y'(t) \le M_r^r(1 + CM_r^{r} T_0)\exp(C(1+M_r^2)T_0)
\la{yprimei}
\ee
and taking the $r$-th root,
\be
\sup_{0\le t\le T_0}(\|\na\sigma'(t)\|_{L^r} + \|\na\rho'(t)\|_{L^r})
\le M_r(1+ CM_rT_0^{\fr{1}{r}})\exp\left({\frac{C(1+M_r^2)T_0}{r}}\right).
\la{normr}
\ee 
Choosing thus
\be
T_0 \le \delta M_r^{-r}
\la{finalchoice}
\ee
with $\delta>0$ sufficiently small and independent of $n$ we see from (\ref{lrnormb}) 
and (\ref{normr}) that
\be
\sup_{0\le t\le T}(\|\rho'\|_{W^{1,r}} + \|\sigma'(t)\|_{W^{1,r}}) \le 4M_r
\la{norms}
\ee
holds.
Returning to the vorticity equation \eqref{omegaprime} we can improve the inequality \eqref{vortr} by using \eqref{norms} (which was proved independently of it, using only induction), and obtain
\be
\sup_{0\le t\le r}\|\omega'(t)\|_{L^r}\le M_r(1+ CM_r^2T_0) \le 2M_r
\la{newvortr}
\ee
if $T_0$ satisfies (\ref{finalchoice}).  From (\ref{norms}) and (\ref{newvortr}) we have
\be
\sup_{0\le t\le T_0}\left(\|\rho^{n+1}(t)\|_{W^{1,r}} + \|\sigma^{n+1}(t)\|_{W^{1,r}} + \|u^{n+1}(t)\|_{W^{1,r}}\right)\le 6M_r
\la{induc}
\ee
and so the induction is complete. The solutions remain bounded in $W^{1,r}$ indpendently of $n$. We can pass to weakly convergent subsequences.

In order to prove convergence, we estimate differences of successive terms in a weaker norm. Going back to the notation \eqref{prime} as well as
\[
(\rho'', \sigma'', u'', \Phi'') = (\rho^{n + 2}, \sigma^{n + 2}, u^{n + 2}, \Phi^{n + 2}),
\]
we estimate $\|\rho'' - \rho'\|_{L^2}$, $\|\sigma'' - \sigma'\|_{L^2}$, and $\|u'' - u'\|_{L^2}$. Taking the differences of $(n + 2)$ equations and the $(n + 1)$ equations in the iteration, we deduce
\be
\begin{aligned}
& \frac{\diff}{\diff{t}} \left(\|\rho'' - \rho'\|_{L^2}^2 + \|\sigma'' - \sigma'\|_{L^2}^2\right) + D \left(\|\nabla (\rho'' - \rho')\|_{L^2}^2 + \|\nabla (\sigma'' - \sigma')\|_{L^2}^2\right)\\
&\quad\leq C \|[u'] - [u]\|_{L^4}^2 \left(\|\rho'\|_{L^4}^2 + \|\sigma'\|_{L^4}^2\right) + C \|\nabla \Phi'\|_{L^\infty}^2 \left(\|\rho'' - \rho'\|_{L^2}^2 + \|\sigma'' - \sigma'\|_{L^2}^2\right)\\
& \qquad + C \|\nabla (\Phi' - \Phi)\|_{L^4}^2 \left(\|\rho'\|_{L^4}^2 + \|\sigma'\|_{L^4}^2\right).
\la{difrhos}
\end{aligned}
\ee
Denoting 
\be
\delta_{n}^2 = \|\rho' -\rho\|^2_{L^2} + \|\sigma'-\sigma\|_{L^2}^2
\la{deltan}
\ee
and
\be
\upsilon_n = \|u'-u\|_{L^2}, 
\la{upsi}
\ee
we have from \eqref{difrhos} that
\be
\fr{d}{dt}\delta_{n+1}^2 \le CM_r^2\left( \delta_{n+1}^2 + \delta_n^2 + C\ell^{-2}\upsilon_n^2\right).
\la{diffrhosb}
\ee
Regarding the velocity, from \eqref{eulerapprox1}, we obtain 
\be
\fr{d}{dt} \upsilon_{n+1}^2 \le CM_r^2[C_{\ell}(\upsilon_n^2 + \upsilon_{n+1}^2) + \delta_{n+1}^2 + \delta_n^2]
\la{upsilons}
\ee
where we have to use $\ell$ to estimate $[\na u]$ in $L^{\infty}$ in terms of only the $L^2$ norm.
By induction, it follows that 
\be
\sup_{0\le t\le T_0} (\delta_{n+1}(t)^2 + \upsilon_{n+1}(t)^2) \le CM^n\fr{T_0^n}{n!}\Gamma_1
\la{ind}
\ee
with $M$ depending on $M_r$ and $\ell$ and $\Gamma_1$ depending on the first difference of solutions.
This implies that the sequence is Cauchy in $L^2$. Passing to weak limit in $W^{1,r}$ on a subsequence, and using the fact that the weak limit of the shifted subsequence is the same, we conclude the existence of solutions of
 \eqref{rhoa}--\eqref{incompapprox} with initial data \eqref{approxinit} which satisfy (\ref{mrbound}).
Their uniqueness follows by estimating the difference of two solutions in $L^2$, using the a~priori bounds in terms of $M_r$ and $\ell$, in a manner entirely similar to the estimates leading to (\ref{diffrhosb}) and (\ref{upsilons}).

\end{proof}

Now we prove the global existence of solutions.
\beg{proposition}Let $T>0$, $r\ge 2$, $\ell>0$. For any initial data with
\[
\|u_0\|_{W^{1,r}(\T^2)} + \|\rho(0)\|_{W^{1, r}(\T^2)} + \|\sigma(0)\|_{W^{1, r}(\T^2)} <\infty,
\]
there exists a unique solution of \eqref{rhoa}--\eqref{incompapprox} with initial data \eqref{approxinit}. The solution satisfies
\[
\sup_{t \in [0, T]} \|u(t)\|_{W^{1,r}(\T^2)} + \|\rho(t)\|_{W^{1, r}(\T^2)} +\
 \|\sigma(t)\|_{W^{1, r}(\T^2)} <\infty .
\]
\end{proposition}
\beg{proof}
The proof follows by contradiction from the a~priori bound results in Theorem \ref{unifellb} and the local existence result Proposition \ref{loca}. Indeed, the a~priori bounds \eqref{solest1} and \eqref{solest2} imply that the quantity
\begin{align}
\label{BKMapprox}
Q(T_2, T_1) = \int_{T_1}^{T_2} \|\nabla [u](\tau)\|_{L^\infty(\T^2)} + \|\rho(\tau)\|_{L^p(\T^2)}^p \diff{\tau} 
\end{align}
is bounded in terms of only the $W^{1,r}$ norms at $T_1$ and independently of time. This quantity can be used to bound the $W^{1,r}$ norms at time $T_2$. By the local existence Proposition \ref{loca} the supremum of times where the $W^{1,r}$ norms are finite cannot be strictly less than $T$.

\end{proof}

\end{document}